\definecolor{lightgrey}{rgb}{.804,.804,.756}
\definecolor{myred}{rgb}{.545,0,0}
\definecolor{myblue}{rgb}{.024,.15,.645}    
\definecolor{mygreen}{rgb}{0,.455,0}
\newcommand{\N}{\mathbb{N}}
\newcommand{\Z}{\mathbb{Z}}
\renewcommand{\O}{\mathcal{O}}
\newcommand{\RO}{\mathcal{R}\mathcal{O}}
\newcommand{\cR}{\mathcal{R}}
\newcommand{\Ret}{\operatorname{Ret}}
\newcommand{\Orb}{\operatorname{Orb}}
\newcommand{\GL}{\mathbf{GL}}
\newcommand{\Id}{\operatorname{Id}}
\newcommand{\rk}{\operatorname{rk}}
\newcommand{\Sol}{\operatorname{Sol}}
\newcommand{\BQ}{\operatorname{BQ}}
\newcommand{\Rack}{\mathbf{Rack}}
\newcommand{\Quandle}{\mathbf{Quandle}}
\newcommand{\Grp}{\mathbf{Grp}}
\newcommand{\Q}{\operatorname{Q}}
\newcommand{\R}{\operatorname{R}}
\newcommand{\SolCat}{\mathbf{YBESol}}
\newcommand{\BQCat}{\mathbf{Biquandle}}
\newcommand{\InjSol}{\mathbf{InjSol}}
\newcommand{\InjRack}{\mathbf{InjRack}}
\newcommand{\Ab}{\operatorname{Ab}}
\newcommand{\IIS}{\operatorname{IIS}}
\newcommand{\Iso}{\operatorname{Iso}}
\newcommand{\Sym}{\operatorname{Sym}}
\newcommand{\oG}{\overline{G}}
\newcommand{\oJ}{\overline{J}}
\newcommand\op{\mathrel{\triangleleft}}
\newcommand\lop{\mathrel{\triangleright}}
\newcommand{\Sq}{\operatorname{Sq}}
\newcommand{\hsigma}{\widehat{\sigma}}
\newcommand{\htau}{\widehat{\tau}}
\newcommand\ract{\mathrel{\leftharpoonup}}
\newcommand\lact{\mathrel{\rightharpoonup}}
\newcommand\racts{\mathrel{\leftharpoondown}}
\newcommand\lacts{\mathrel{\rightharpoondown}}
\newcommand\ractsi{\mathrel{\hat{\leftharpoondown}}}
\newcommand\lactsi{\mathrel{\hat{\rightharpoondown}}}
\numberwithin{equation}{section}
\numberwithin{figure}{section}
	\theoremstyle{plain}
\newtheorem{thm}{Theorem}[section]
\newtheorem{lem}[thm]{Lemma}
\newtheorem{cor}[thm]{Corollary}
\newtheorem{pro}[thm]{Proposition}
\newtheorem*{conjecture*}{Conjecture}
\newtheorem{question}[thm]{Question}
	\theoremstyle{definition}
\newtheorem{defn}[thm]{Definition}
\newtheorem{notation}[thm]{Notation}
\newtheorem{exa}[thm]{Example}
	\theoremstyle{remark}
\newtheorem{rem}[thm]{Remark}
\setlist{nolistsep}
\begin{document}

\title[On structure groups of YBE solutions]{On structure groups of set-theoretic solutions to the Yang--Baxter equation}

\begin{abstract} 
This paper explores the structure groups $G_{(X,r)}$ of finite non-degenerate set-theoretic solutions $(X,r)$ to the Yang--Baxter equation. Namely, we construct a finite quotient $\overline{G}_{(X,r)}$ of $G_{(X,r)}$, generalizing the Coxeter-like groups introduced by Dehornoy for involutive solutions. This yields a finitary setting for testing injectivity: if $X$ injects into $G_{(X,r)}$, then it also injects into $\overline{G}_{(X,r)}$. We shrink every solution to an injective one with the same structure group, and compute the rank of the abelianization of $G_{(X,r)}$. We show that multipermutation solutions are the only involutive solutions with diffuse structure group; that only free abelian structure groups are biorderable; and that for the structure group of a self-distributive solution, the following conditions are equivalent: biorderable, left-orderable, abelian, free abelian, torsion free.
\end{abstract}

\keywords{Yang--Baxter equation, structure group, structure rack, bijective $1$-cocycle, birack, quandle, biquandle, injective solution, multipermutation solution, diffuse group, orderable group, abelianization.}

\subjclass[2010]{16T25, 20N02, 06F15.}

\author{Victoria Lebed}
\address{
Hamilton Mathematics Institute and
School of Mathematics,
Trinity College,
Dublin 2, Ireland}
\email{lebed.victoria@gmail.com, lebed@maths.tcd.ie}
  
\author{Leandro Vendramin}
\address{
Departamento de Matem\'atica -- FCEN,
Universidad de Buenos Aires, Pab. I -- Ciudad Universitaria (1428),
Buenos Aires -- Argentina}
\email{lvendramin@dm.uba.ar}

\maketitle

\section*{Introduction}

The physics-motivated \emph{Yang--Baxter equation} is now omnipresent in mathematics. The interest to its set-theoretic version goes back to Drinfel$'$d \cite{MR1183474}. Compared to linear solutions, set-theoretic ones are easier to study and classify. At the same time, they form a rich family of structures, and their deformations yield a wide variety of linear solutions. Furthermore, one gets powerful knot and link invariants by counting diagram colorings by such solutions. In this paper, by a \emph{solution} we  mean a YBE solution
\[r(x,y)=(\sigma_x(y),\tau_y(x))\]
on a finite set $X$, with $r$ required to be invertible and \emph{non-degenerate} (i.e., the maps $\tau_y$ and $\sigma_y$ are invertible for all $y$).

Two types of solutions are particularly well studied: \emph{involutive} solutions (with $r^2=\Id$), and \emph{self-distributive (SD) solutions}, i.e., those of the form \[r_{\op}(x,y)=(y, x \op y),\] 
where $(X, \op)$ is a \emph{rack}, i.e., the binary operation~$\op$ is self-distributive and has invertible right translations. These two types can be thought of as perpendicular axes in the variety of solutions. Thus, to any solution one can associate its \emph{structure rack} $(X, \op_r)$ \cite{Sol,LYZ,LebVen}, which is trivial if and only if the solution is involutive. The interplay between these two axes is fundamental in our work. 

Following Etingof, Schedler, and Soloviev \cite{ESS}, consider the \emph{structure group}
\[G_{(X,r)} = \langle X \,|\, xy = \sigma_x(y)\tau_y(x) \text{ for all } x,y \in X \rangle\]
of a solution $(X,r)$. The group algebra of $G_{(X,r)}$ can be regarded  as the {universal enveloping algebra} of $(X,r)$. 

Structure groups bring group-theoretic tools into the study of the YBE. Thus, classifying structure groups (or certain quotients thereof) is a reasonable first step in the classification of solutions. This strategy was successfully implemented, for instance, by Ced{\'o}, Jespers, and del R{\'{\i}}o \cite{CJR_IYBG}. On the other hand, for involutive $r$, $G_{(X,r)}$ is a quadratic algebra with interesting properties, both geometric and algebraic: it is Bieberbach and of $I$-type (as shown by Gateva-Ivanova and Van den Bergh \cite{GIVdB}), and also Garside (according to Chouraqui \cite{MR2764830}). 

Extending the work of Chouraqui--Godelle \cite{ChouGod}, Dehornoy \cite{DehCycleSet} constructed a \emph{finite quotient} $\oG_{(X,r)}$ of $G_{(X,r)}$ by a normal free abelian subgroup; here $r$ is again involutive. It is analogous to the Coxeter group quotients for Artin--Tits groups, in that the solution $r$ and the Garside structure of $G_{(X,r)}$ reconstruct from $\oG_{(X,r)}$. 

Recently, the \emph{orderability} problem was solved for the groups $G=G_{(X,r)}$ with involutive $r$, first partially by Chouraqui \cite{ChouOrd}, and then completely by Bachiller, Ced{\'o}, and the second author \cite{BCV}. Their results read as follows:
\begin{itemize}
\item $G$ is bi-orderable $\; \Longleftrightarrow \; G$ is free abelian $\; \Longleftrightarrow \;r$ is trivial: $r(x,y)=(y,x)$;
\item $G$ is left-orderable $\; \Longleftrightarrow \; G$ is poly-$\Z$ $\; \Longleftrightarrow \; r$ is MP.
\end{itemize}
An involutive solution is called \emph{multipermutation (MP)} if several iterations of the retraction construction $X \mapsto \Ret(X)=X /\!\sim$ yield a one-element set; here $x \sim y$ means $\sigma_x = \sigma_y$, and $r$ induces a solution on $\Ret(X)$ \cite{ESS}. This gives a new example of a structural property of the solution (being MP) which can be read off its structure group (namely, its orderability). The importance of MP solutions is discussed, for instance, in \cite{MR2652212,MR2885602,V,BCJO}.
 
These results work for involutive $r$ only. General structure groups are much more mysterious. Among the above properties, only the existence of finite quotients was established for all solutions. Concretely, Lu, Yan, and Zhu \cite{LYZ} described a finitely generated abelian normal subgroup $Z^0_{(X,r)}$ of $G_{(X,r)}$ of finite index. Soloviev \cite{Sol} showed the rank of $Z^0_{(X,r)}$ to be $K_r=\#\Orb(X,\op_r)$, which is the number of \emph{orbits} of $X$ with respect to the actions $x \mapsto x \op_r y$. The  quotient $G^0=G^0_{(X,r)} = G_{(X,r)} / Z^0_{(X,r)}$ looses a lot of information about $(X,r)$. Thus, there are infinitely many solutions sharing the same $G^0$ \cite{CJR_IYBG}. In the involutive case, $G^0$ is only a quotient of Dehornoy's $\oG$, and no longer encodes $(X,r)$ faithfully. In the self-distributive case, a quotient $\oG$ of $G=G_{(X,{\op})}$ refining $G^0$ was described for $1$-orbit racks by Gra{\~n}a, Heckenberger, and the second author \cite{GraHeckVen}. Their construction is as close to faithfulness as possible: the natural map $X \to G$ is injective if and only if it remains so after passing to the quotient $X \to \oG$. 

This paper extends some of the above results to all solutions $(X,r)$. Theorem~\ref{thm:quotient} describes a normal free abelian subgroup $Z_{(X,r)}$ of $G_{(X,r)}$ of rank $K_r$, such that
\begin{itemize}
\item the quotient $\oG_{(X,r)} = G_{(X,r)} / Z_{(X,r)}$ is finite;
\item the natural map $X \to G_{(X,r)}$ is injective if and only if it remains so after passing to the quotient $X \to \oG_{(X,r)}$.
\end{itemize}
A solution is called \emph{injective} if the map $X \to G_{(X,r)}$ is so. Involutive solutions are archetypal examples: many of their properties, such as the group-theoretical characterization, generalize to all injective solutions \cite{Sol}. Our construction yields an injectivity test involving only finite objects $X$ and $\oG_{(X,r)}$, as opposed to the infinite group $G_{(X,r)}$. This answers a question of Soloviev \cite{Sol}. The subgroup $Z_{(X,r)}$ is generated by appropriately chosen powers $x^d \in G_{(X,\op_r)}$ of $x \in X$ (which coincide for $x$'s from the same $\op_r$-orbit), pulled back by the bijective $1$-cocycle $J \colon G_{(X,r)} \to G_{(X,\op_r)}$ from \cite{Sol,LYZ,LebVen}. For SD solutions, $\oG_{(X,r)}$ recovers the above quotient; for involutive solutions, it is a slight variation thereof. Our construction is explicit and the proofs are elementary; this is to be compared with the study of~$G^0$ in~\cite{Sol}, which required the Hochschild--Serre sequence.

As a by-product, we characterize finite injective racks as sub-racks of finite \emph{conjugation racks} (that is, finite groups with the conjugation operation $g \op h = h^{-1}gh$).

Using the finite quotients $\oG_{(X,r)}$ and other arguments, we improve the understanding of structure groups. In particular, we show that: 
\begin{itemize}
\item $G_{(X,r)}$ is \emph{virtually abelian}, hence \emph{linear}, hence \emph{residually finite};
\item the rank of its \emph{abelianization} is the number $k_r$ of its \emph{orbits} with respect to the actions $x \mapsto \sigma_y(x)$ and $x \mapsto \tau_y(x)$: \[\rk (\Ab G_{(X,r)}) =k_r;\] 
\item every solution shrinks to an injective one with the same structure group; this construction is completely algorithmic;
\item MP solutions are the only involutive solutions with \emph{diffuse} structure group; this answers a question of Chouraqui \cite{ChouOrd};
\item $G_{(X,r)} \text{ is biorderable } \; \Longleftrightarrow \; G_{(X,r)} \cong \Z^{k_r}$;
\item for the structure group of an SD solution, the following conditions are equivalent:
\[\text{ biorderable } \; \Longleftrightarrow \; \text{ left-orderable } \; \Longleftrightarrow \; \text{ (free) abelian } \; \Longleftrightarrow \; \text{ torsion free}.\] 
\end{itemize}

Left orderability and diffusion for general structure groups are more delicate problems, and remain open. Another open question is to understand the torsion of $\Ab G_{(X,r)}$ in terms of basic characteristics of $(X,r)$.

One more interesting question, not addressed in this paper, would be to determine what finite groups can be obtained as quotients of a given structure group $G_{(X,r)}$ by normal abelian subgroups. Is $G^0_{(X,r)}$ minimal in this family? Is our $\oG_{(X,r)}$ minimal among quotients preserving injectivity?

The paper is organized as follows. Sections \ref{s:StrGroups}--\ref{s:StrRacks} summarize basic definitions and results on structure groups and racks. They also contain new results:
\begin{itemize}
 \item The right and the left structure racks of a solution are isomorphic.
 \item The structure groups of a solution and of its biquandle quotient are isomorphic. This can be understood in terms of the structure rack and its quandle quotient. (\emph{Biquandles} and \emph{quandles} are, respectively, solutions and racks satisfying an additional condition, which simplifies the study of their structure groups.)
\end{itemize}  
Section~\ref{s:1cocycles} contains a technical lemma on passing to quotients in bijective (semi)group $1$-cocycles, illustrated with the $1$-cocycle $J \colon G_{(X,r)} \to G_{(X,\op_r)}$. In Section~\ref{s:QuotientsSol}, this lemma is used to pull back the finite quotient $\oG_{(X,\op_r)}$, described in Section~\ref{s:QuotientsRack}, to get the desired finite quotient $\oG_{(X,r)}$. Sections \ref{s:Applications}--\ref{s:Orderability} contain applications of our construction, and prove the properties of structure groups listed above. In Appendix~\ref{s:Size3}, we describe all size $3$ biquandles, their structure groups, and the finite quotients $\overline{G}$ thereof. Even these small examples exhibit a wide range of behaviours. Thus among solutions with the same structure rack we find solutions with different~$G$, or the same $G$ but different $\overline{G}$, or else the same $G$ and $\overline{G}$ but distinct $1$-cocycles $J$.

\section{Structure groups for YBE solutions}\label{s:StrGroups}

In this paper, by a \emph{solution} we mean a finite invertible non-degenerate set-theoretic solution $(X,r)$ of the \emph{Yang--Baxter equation} (YBE)
\[
r_{1}r_{2}r_{1}=r_{2}r_{1}r_{2}, \qquad \text{ where } r_{1}=r \times \Id_X, \,r_{2} = \Id_X \times r.
\]
This means that $X$ is a finite set, the map
\begin{align*}
r\colon X\times X &\to X\times X,\\
(x,y) &\mapsto (\sigma_x(y),\tau_y(x))
\end{align*}
is bijective, with the inverse
\begin{align*}
r^{-1} \colon 
(x,y) &\mapsto (\hsigma_x(y),\htau_y(x)),
\end{align*}
and its components $\sigma_x,\tau_x \colon X \to X$ are bijective for all $x \in X$. Such solutions are used for producing efficient coloring invariants of knots and their higher-dimensional analogues: see \cite{FRS_Species,NelVo,Rump_Biqu} and references therein. They are thus actively studied by knot theorists, who call them \emph{biracks}, and use the word \emph{biquandles} for biracks endowed with a bijection $t \colon X \to X$ satisfying 
\begin{align}\label{eqn:biquandle}
 &\forall x \in X, \qquad r(t(x),x) = (t(x),x).
\end{align}

To illustrate certain concepts and proofs, we will use graphical calculus, providing a bare minimum of explanations. More details can be found, for instance, in~\cite{LebVen}. The map $r$ is presented as on Fig.~\ref{fig:sol}. Our conditions on $r$ mean that any two neighboring colors on its diagram uniquely determine the two remaining colors. The YBE translates as the topological Reidemeister~$\mathrm{III}$ move. This graphical calculus provides a bridge towards knot theory. 
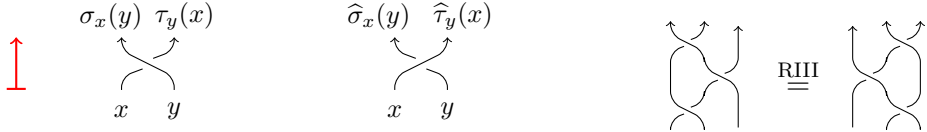
\begin{figure}[h]\centering
\begin{tikzpicture}[scale=0.7]
\draw [rounded corners](0,0)--(0,0.25)--(0.4,0.4);
\draw [rounded corners,->](0.6,0.6)--(1,0.75)--(1,1);
\draw [rounded corners,->](1,0)--(1,0.25)--(0,0.75)--(0,1);
\node  at (0,-0.4) {$x$};
\node  at (1,-0.4) {$y$};
\node  at (-.2,1.4) {$\sigma_x(y)$};
\node  at (1.2,1.4) {$\tau_y(x)$};
\draw [|->, red, thick]  (-2,0) -- (-2,1);
\end{tikzpicture} \hspace*{1.3cm}
\begin{tikzpicture}[scale=0.7]
\draw [rounded corners,->](0,0)--(0,0.25)--(1,0.75)--(1,1);
\draw [rounded corners](1,0)--(1,0.25)--(.6,.4);
\draw [rounded corners,->](.4,.6)--(0,0.75)--(0,1);
\node  at (0,-0.4) {$x$};
\node  at (1,-0.4) {$y$};
\node  at (-.3,1.4) {$\hsigma_x(y)$};
\node  at (1.3,1.4) {$\htau_y(x)$};
\end{tikzpicture} \hspace*{2cm}
\begin{tikzpicture}[xscale=0.45,yscale=0.45]
\draw [rounded corners](0,0)--(0,0.25)--(0.4,0.4);
\draw [rounded corners](0.6,0.6)--(1,0.75)--(1,1.25)--(1.4,1.4);
\draw [rounded corners,->](1.6,1.6)--(2,1.75)--(2,3);
\draw [rounded corners](1,0)--(1,0.25)--(0,0.75)--(0,2.25)--(0.4,2.4);
\draw [rounded corners,-<](0.6,2.6)--(1,2.75)--(1,3);
\draw [rounded corners,-<](2,0)--(2,1.25)--(1,1.75)--(1,2.25)--(0,2.75)--(0,3);
\node  at (4,1.5){\Large $\overset{\mathrm{RIII}}{=}$\hspace*{.2cm}};
\end{tikzpicture}
\begin{tikzpicture}[xscale=0.45,yscale=0.45]
\draw [rounded corners](1,1)--(1,1.25)--(1.4,1.4);
\draw [rounded corners,-<](1.6,1.6)--(2,1.75)--(2,3.25)--(1,3.75)--(1,4);
\draw [rounded corners](0,1)--(0,2.25)--(0.4,2.4);
\draw [rounded corners](0.6,2.6)--(1,2.75)--(1,3.25)--(1.4,3.4);
\draw [rounded corners,-<](1.6,3.6)--(2,3.75)--(2,4);
\draw [rounded corners,->](2,1)--(2,1.25)--(1,1.75)--(1,2.25)--(0,2.75)--(0,4);
\end{tikzpicture}
\caption{Crossings representing a solution, and the R$\mathrm{III}$ move representing the YBE.}\label{fig:sol}
\end{figure}

A (right) \emph{rack} is a set $X$ with a (right) self-distributive binary operation $\op$, 
in the sense of
\[(x \op y) \op z = (x \op z) \op (y \op z)\]
for all $x,y,z \in X$, such that the right translations 
\[\rho_y \colon x \mapsto x \op y\] 
are bijections $X \to X$ for all $y \in X$. A \emph{quandle} is a rack satisfying $x \op x = x$ for all $x \in X$. A rack $(X,\op)$ yields two solutions: $\Sol(X,\op)=(X,r_{\op})$ and $\Sol'(X,\op)=(X,r'_{\op})$, with
\[r_{\op}(x,y)=(y, x \op y), \qquad\qquad r'_{\op}(x,y)=(y \op x, x),\] 
called \emph{SD solutions}. We mainly work with $r_{\op}$ here, the properties of $r'_{\op}$ being similar. An SD solution $(X,r_{\op})$ is a biquandle if and only if $(X,\op)$ is a quandle, hence the terminology. There are also symmetric notions of a \emph{left rack} $(X,\lop)$, and \emph{left SD solutions} $(X,r_{\lop})$ and $(X,r'_{\lop})$, with $r_{\lop}(x,y)=(x \lop y, x)$, $r'_{\lop}(x,y)=(y, y \lop x)$. 

Involutive and SD solutions form the two best understood solution families. While involutive solutions are particularly interesting to algebraists, SD ones were, until recently, the realm of knot theorists. The structure rack construction recalled in Section~\ref{s:StrRacks} shows that these solutions also have an algebraic interest.

The \emph{structure group} of a solution $(X,r)$ is defined by generators and relations:
\[G_{(X,r)} = \langle X \,|\, xy = \sigma_x(y)\tau_y(x) \text{ for all } x,y \in X \rangle.\]
The structure group of a rack $(X,\op)$ is defined as the structure group of $(X,r_{\op})$ or $(X,r'_{\op})$, denoted by $G_{(X,\op)}$ or $G'_{(X,\op)}$ respectively. The map $x_1^{\varepsilon_1} \ldots x_s^{\varepsilon_s} \mapsto x_s^{\varepsilon_s} \ldots x_1^{\varepsilon_1}$, where $x_i \in X$, $\varepsilon_i = \pm 1$, induces a group isomorphism $G_{(X,\op)}^{op} \cong G'_{(X,\op)}$.

A solution is called \emph{injective} if the natural map
\begin{align*}
\iota \colon X &\to G_{(X,r)},\\
x &\mapsto x
\end{align*}
is injective. All involutive solutions are injective (see Section~\ref{s:1cocycles}). Conversely, many properties of involutive solutions generalize to injective ones. A simple example of a non-injective solution is $(\Z,r_{\op})$, where $\Z$ is considered as a rack, with $x \op y = x+1$ for all $x,y \in \Z$. Indeed, from $r_{\op}(x,x)=(x,x+1)$ one deduces $G_{(\Z,\op)} \cong (\Z,+)$, and the map $\iota \colon \Z \to \Z$ sends all $x \in \Z$ to $1$. A rack $(X,\op)$ is called \emph{injective} if the corresponding solution $(X,r_{\op})$ (or, equivalently, $(X,r'_{\op})$) is such.

If $(X,r)$ is a solution, then so is $(X,r^{-1})$. The YBE for $r$ and $r^{-1}$ implies
\begin{align}
\tau_y\tau_x &= \tau_{\tau_y(x)}\tau_{\sigma_x(y)}, & \htau_y\htau_x &= \htau_{\tau_y(x)} \htau_{\sigma_x(y)}, \label{eqn:tau_action}\\
\sigma_x\sigma_y &= \sigma_{\sigma_x(y)}\sigma_{\tau_y(x)}, & \hsigma_x\hsigma_y &= \hsigma_{\sigma_x(y)} \hsigma_{\tau_y(x)} \label{eqn:sigma_action}
\end{align}
for all $x,y \in X$. As a consequence, $\tau$ and $\htau$ induce right actions of $G_{(X,r)}$ on $X$, and $\sigma$ and $\hsigma$ induce left actions.

We will prove results on structure groups of racks and solutions while working mostly with (bi)quandles. The following construction makes it possible.

\begin{pro}\label{pro:birack_biquandle}
For a solution $(X,r)$, consider the smallest equivalence relation $\bumpeq$ such that $x \bumpeq \sigma_x(y)$ for all $x,y \in X$ satisfying $y=\tau_y(x)$. Then $r$ induces a biquandle structure $r'$ on $X/\!\bumpeq$. Moreover, the quotient map $X \twoheadrightarrow X/\!\bumpeq$ induces a group isomorphism
\[G_{(X,r)} \overset{\sim}{\longrightarrow} G_{(X/\!\bumpeq,\, r')}.\]
\end{pro}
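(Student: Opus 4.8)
The plan is to prove the three assertions — that $r$ induces a well-defined birack $r'$ on $X/\!\bumpeq$, that $r'$ is a biquandle, and that the two structure groups agree — in four stages: (a) the relation $\bumpeq$ is already collapsed inside $G_{(X,r)}$; (b) $\bumpeq$ is an $r$-congruence, so $r$ descends to $r'$; (c) $r'$ is a biquandle; (d) the structure groups coincide. Stage (a) is immediate: if $y=\tau_y(x)$ then $r(x,y)=(\sigma_x(y),y)$, so the defining relation of $G_{(X,r)}$ reads $xy=\sigma_x(y)\,y$, whence $x=\sigma_x(y)$ in $G_{(X,r)}$. Thus $\iota\colon X\to G_{(X,r)}$ is constant on $\bumpeq$-classes and factors through $X\twoheadrightarrow X/\!\bumpeq$.

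Stage (b) is the heart of the matter: one must check that $u\bumpeq u'$ and $v\bumpeq v'$ imply $\sigma_u(v)\bumpeq\sigma_{u'}(v')$ and $\tau_v(u)\bumpeq\tau_{v'}(u')$. By transitivity it suffices to treat a single generating move in one variable with the other fixed, so I would assume $r(a,b)=(a',b)$ with $\tau_b(a)=b$, which is the generating relation $a\bumpeq a'$. Feeding $(x,y)=(a,b)$ into \eqref{eqn:sigma_action} and \eqref{eqn:tau_action} and using $\tau_b(a)=b$ yields $\sigma_a\sigma_b=\sigma_{a'}\sigma_b$ and $\tau_b\tau_a=\tau_b\tau_{a'}$, hence $\sigma_a=\sigma_{a'}$ and $\tau_a=\tau_{a'}$ as self-maps of $X$; this already settles, on the nose, one of the two required relations in each of the two cases (first or second variable moved). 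For the other relation I would run the YBE $r_1r_2r_1=r_2r_1r_2$ on the triple $(a,b,v)$ — respectively $(u,v,b)$ — with the fixed-point element $b$ placed next to a free variable, and simplify the three output coordinates using $\sigma_a=\sigma_{a'}$, $\tau_a=\tau_{a'}$, $\tau_b(a)=b$. The point is that one coordinate of the resulting equality becomes an identity of the form $\tau_s(m)=s$, i.e.\ a \emph{new} instance of the hypothesis of the generating relation, while a neighbouring coordinate identifies $\sigma_m(s)$ with (a translate of) the element one wants; the generating relation then yields the missing $\bumpeq$, and as the auxiliary variable runs over $X$ these cover all the needed cases, since the translations involved (e.g.\ $\sigma_b$) are bijections. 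Pictorially this says that a fixed-point kink on a strand slides freely through any crossing, and the graphical calculus of Figure~\ref{fig:sol} makes the bookkeeping transparent. Granting (b), the quotient map $q\colon X\to X/\!\bumpeq$ intertwines $r$ with a well-defined $r'$, and since $q$ is surjective and $X$ is finite (a surjective self-map of a finite set being bijective), invertibility and non-degeneracy of $r'$ and the YBE for $r'$ follow formally.

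For stage (c), given $x\in X$ I would put $t(x):=\tau_x^{-1}(x)$, so that $\tau_x(t(x))=x$ and $r(t(x),x)=(\sigma_{t(x)}(x),x)$; the generating relation, applied to $(a,b)=(t(x),x)$, gives $\sigma_{t(x)}(x)\bumpeq t(x)$, hence $r'([t(x)],[x])=([t(x)],[x])$. By non-degeneracy of $r'$ the map $[z]\mapsto[\tau_x(z)]$ is a bijection of the finite set $X/\!\bumpeq$, so the class $\eta$ with $r'(\eta,[x])=(\eta,[x])$ is uniquely determined by $[x]$; hence $t'([x]):=[t(x)]$ is a well-defined map, and it is injective — hence, by finiteness, bijective — because $t'([x])=t'([x'])=[z]$ forces $[\sigma_z(x)]=[z]=[\sigma_z(x')]$ with $[y]\mapsto[\sigma_z(y)]$ injective. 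Thus $(X/\!\bumpeq,r')$ together with $t'$ is a biquandle. For stage (d), $x\mapsto[x]$ carries every defining relation $xy=\sigma_x(y)\tau_y(x)$ of $G_{(X,r)}$ to the defining relation $[x][y]=[\sigma_x(y)][\tau_y(x)]$ of $G_{(X/\bumpeq,\,r')}$, so it induces a surjection $G_{(X,r)}\to G_{(X/\bumpeq,\,r')}$; conversely $[x]\mapsto\iota(x)$ is well defined by stage (a) and likewise preserves relations, inducing a homomorphism the other way. These are mutually inverse on generators, hence mutually inverse isomorphisms.

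The one genuine obstacle is stage (b): organising the YBE computation so that one of its output coordinates is recognised as a fresh instance of the generating relation. Everything else is either formal or a one-line cancellation, and stage (d) is almost automatic once (a) and the well-definedness of $r'$ are in place.
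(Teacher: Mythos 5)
Your proposal follows the paper's proof essentially step for step: reduce to a single generating pair $a\bumpeq a'=\sigma_a(b)$ with $\tau_b(a)=b$, extract $\sigma_a=\sigma_{a'}$ and $\tau_a=\tau_{a'}$ from \eqref{eqn:tau_action}--\eqref{eqn:sigma_action}, use the YBE on a triple to produce a fresh instance of the generating relation for the remaining compatibilities, define the biquandle map via $t(x)=\tau_x^{-1}(x)$, and get the group isomorphism from the collapse $x=\sigma_x(y)$ in $G_{(X,r)}$. The one concrete slip is in the second YBE computation: the triple you name, $(u,v,b)$, does not contain $a$ at all, so as written it cannot yield $\sigma_u(a)\bumpeq\sigma_u(a')$; the correct choice is $(u,a,b)$ (the paper's $(z,x,y)$), and there the fixed-point element $b$ sits next to $a$, not next to the free variable, so your guiding heuristic ``place $b$ next to a free variable'' is only right for the first case $(a,b,v)$. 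With $(u,a,b)$ your described mechanism works verbatim: comparing $r_1r_2r_1$ with $r_2r_1r_2$ and cancelling by non-degeneracy gives $r(\sigma_u(a),w)=(\sigma_u(a'),w)$, where $w$ is the first component of $r(\tau_a(u),b)$; the fixed second coordinate is a new instance of the generating hypothesis, and no bijectivity of a translation is needed in this case (that is needed only in the first case, where the conclusion arrives as $\tau_{\sigma_b(v)}(a)\bumpeq\tau_{\sigma_b(v)}(a')$ and $\sigma_b$ bijective covers all of $X$). Everything else is correct, and your explicit well-definedness argument for $t'$ via uniqueness of the fixed-point partner is a point the paper leaves implicit.
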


\begin{defn}
The biquandle $(X/\!\bumpeq,\, r')$ from the proposition is called the \emph{induced biquandle} of $(X,r)$, denoted by $\BQ (X,r)$.
\end{defn}

The relation $\bumpeq$ is best understood diagrammatically: one identifies $x$ and $x'$ whenever the coloring situation from Fig.~\ref{fig:ind_biqu} occurs.

\begin{figure}[h]\centering
\begin{tikzpicture}[scale=0.7]
\draw [rounded corners](0,0)--(0,0.25)--(0.4,0.4);
\draw [rounded corners,->](0.6,0.6)--(1,0.75)--(1,1);
\draw [rounded corners,->](1,0)--(1,0.25)--(0,0.75)--(0,1);
\node  at (0,-0.4) {$x$};
\node  at (1,-0.4) {$y$};
\node  at (0,1) [above] {$x'$};
\node  at (1,1) [above] {$y$};
\end{tikzpicture} 
\caption{Identifying $x$ with $x'$ in all such situations, one shrinks a solution to its induced biquandle.}\label{fig:ind_biqu}
\end{figure}
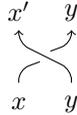

\begin{proof}
Take any $x,y \in X$ satisfying $y=\tau_y(x)$. Put $x'=\sigma_x(y)$. By Equations \eqref{eqn:tau_action}--\eqref{eqn:sigma_action}, one has $\tau_{x'} = \tau_x$ and $\sigma_{x'} = \sigma_x$, implying
\begin{align*}
r(x,z) &= (\sigma_x(z),\tau_z(x)), & r(z,x) &= (\sigma_z(x),\tau_x(z)),\\
r(x',z) &= (\sigma_x(z),\tau_z(x')), & r(z,x') &= (\sigma_z(x'),\tau_x(z))
\end{align*}
for any $z \in X$. To show that $r$ induces a map $r'$ on $(X/\!\bumpeq)^{\times 2}$, it suffices to check the relations $\tau_z(x) \bumpeq \tau_z(x')$, $\sigma_z(x) \bumpeq \sigma_z(x')$. In other terms, one needs to verify that the right and left $G_{(X,r)}$-actions on $X$ via $\tau$ and $\sigma$ respect the relation $\bumpeq$.

Put $r(z,x') = (\sigma_z(x'),w)$, and $r(w,y)=(u,v)$; precise expressions of $u,v,w$ in terms of $x,y,z$ are not important here. All these notations are summarized in Fig.~\ref{fig:SigmaRespectsBump}. Then 
\[(\sigma_z(x'),u,v) = r_2r_1r_2(z,x,y) = r_1r_2r_1(z,x,y),\]
which means $r(z,x) = (\sigma_z(x),\tilde{w})$, $r(\tilde{w},y)=(\tilde{u},v)$, and $r(\sigma_z(x),\tilde{u}) = (\sigma_z(x'), u)$ for some $\tilde{u},\tilde{w} \in X$. By the non-degeneracy of $r$, $r(w,y)=(u,v)$ and $r(\tilde{w},y)=(\tilde{u},v)$ imply $w=\tilde{w}$, $u= \tilde{u}$. But then $r(\sigma_z(x),u) = (\sigma_z(x'), u)$, hence $\sigma_z(x) \bumpeq \sigma_z(x')$. The proof of $\tau_z(x) \bumpeq \tau_z(x')$ is similar.

\begin{figure}[h]\centering
\begin{tikzpicture}[scale=0.8]
\draw [rounded corners](1,1)--(1,1.25)--(1.4,1.4);
\draw [rounded corners,->](1.6,1.6)--(2,1.75)--(2,3.25)--(1,3.75)--(1,4);
\draw [rounded corners](0,1)--(0,2.25)--(0.4,2.4);
\draw [rounded corners](0.6,2.6)--(1,2.75)--(1,3.25)--(1.4,3.4);
\draw [rounded corners,->](1.6,3.6)--(2,3.75)--(2,4);
\draw [rounded corners,->](2,1)--(2,1.25)--(1,1.75)--(1,2.25)--(0,2.75)--(0,4);
\node  at (5,2.5){\Large $\overset{\mathrm{RIII}}{\longleftrightarrow}$\hspace*{1cm}};
\node  at (0,.7) {$ z$};
\node  at (1,.7) {$ x$};
\node  at (2,.7) {$ y$};
\node  at (-.2,4.3) {$ \sigma_z(x')$};
\node  at (1,4.3) {$ u$};
\node  at (2,4.3) {$ v$};
\node  at (1.3,2) {$ x'$};
\node  at (2.2,1.9) {$ y$};
\node  at (.8,3) {$ w$};
\end{tikzpicture}
\begin{tikzpicture}[scale=0.8]
\draw [rounded corners](0,0)--(0,0.25)--(0.4,0.4);
\draw [rounded corners](0.6,0.6)--(1,0.75)--(1,1.25)--(1.4,1.4);
\draw [rounded corners,->](1.6,1.6)--(2,1.75)--(2,3);
\draw [rounded corners](1,0)--(1,0.25)--(0,0.75)--(0,2.25)--(0.4,2.4);
\draw [rounded corners,->](0.6,2.6)--(1,2.75)--(1,3);
\draw [rounded corners,->](2,0)--(2,1.25)--(1,1.75)--(1,2.25)--(0,2.75)--(0,3);
\node  at (0,-.3) {$ z$};
\node  at (1,-.3) {$ x$};
\node  at (2,-.3) {$ y$};
\node  at (-.2,3.3) {$ \sigma_z(x')$};
\node  at (1,3.3) {$ u$};
\node  at (2,3.3) {$ v$};
\node  at (-.6,1.4) {$ \sigma_z(x)$};
\node  at (1.2,2) {$ u$};
\node  at (1.2,1) {$ w$};
\end{tikzpicture}
\caption{Relation $x \bumpeq x'$ implies $\sigma_z(x) \bumpeq \sigma_z(x')$ for all $z$.}\label{fig:SigmaRespectsBump}
\end{figure}
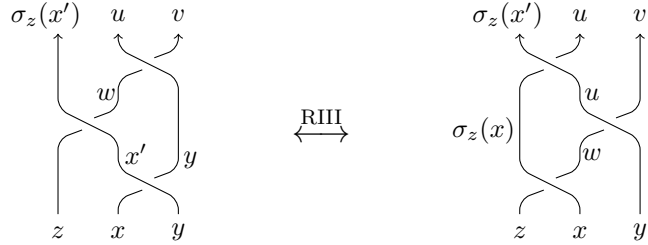

The YBE and the surjectivity for $r'$, as well as the surjectivity of its $\sigma'$- and $\tau'$-components, follow from the corresponding properties for $r$. Since the set $X/\!\bumpeq$ is finite, surjectivity implies bijectivity. So, $(X/\!\bumpeq,r')$ is indeed a solution.

For any $y \in X$, the element $t(y):={\tau_y}^{-1}(y)$ satisfies $y=\tau_y(t(y))$. The definition of the relation~$\bumpeq$ then yields $t(y) \bumpeq \sigma_{t(y)}(y)$. Hence the map $t' \colon X/\!\bumpeq \,\to X/\!\bumpeq$ induced by $t$ satisfies $r'(t'(y),y) = (t'(y),y)$ for all $y \in X/\!\bumpeq$. This map is injective, since $y$ reconstructs from $t'(y)$ via $y={\sigma_{t'(y)}}^{-1}(t'(y))$. By the finiteness of $X/\!\bumpeq$, $t'$ is then bijective, and $(X/\!\bumpeq,r')$ is a biquandle.

Finally, for all $x,y \in X$ satisfying $y=\tau_y(x)$, from $xy = \sigma_x(y)\tau_y(x)$ one deduces $x=\sigma_x(y)$ in $G_{(X,r)}$, so when passing to the quotient $X/\!\bumpeq$, one does not change the structure group.
\end{proof}

\begin{rem}\label{rem:IndBiquUniv}
Induced biquandles enjoy the following universal property: for any morphism $\phi \colon X \to Y$ of solutions, where $Y$ is a biquandle, there is a unique morphism $\phi' \colon X/\!\bumpeq\, \to Y$ with $\phi = \phi'\pi$. Here $\pi$ is the quotient map $X \twoheadrightarrow X/\!\bumpeq$.
\end{rem}

Arguments from the proof of Proposition~\ref{pro:birack_biquandle} imply the following elementary observation:

\begin{lem}\label{lem:Inj_Biqu}
An injective solution is necessarily a biquandle.
\end{lem}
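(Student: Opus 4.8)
The plan is to construct directly the distinguished bijection $t\colon X \to X$ required by axiom~\eqref{eqn:biquandle}, reusing the computations already present in the proof of Proposition~\ref{pro:birack_biquandle}. By non-degeneracy each $\tau_x$ is invertible, so we may set $t(x) := \tau_x^{-1}(x)$; then $\tau_x(t(x)) = x$ by construction, so the second coordinate of $r(t(x),x)$ is automatically $x$, and the whole content of~\eqref{eqn:biquandle} reduces to the single identity $\sigma_{t(x)}(x) = t(x)$ holding \emph{in $X$}.

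The key step uses injectivity exactly once. Recall from the last paragraph of the proof of Proposition~\ref{pro:birack_biquandle} that whenever $a,y \in X$ satisfy $y = \tau_y(a)$, the defining relation $ay = \sigma_a(y)\tau_y(a)$ of $G_{(X,r)}$ simplifies, after cancelling $y$, to $a = \sigma_a(y)$ in $G_{(X,r)}$. Applying this with $a = t(x)$ and $y = x$ --- which is legitimate because $\tau_x(t(x)) = x$ --- yields $t(x) = \sigma_{t(x)}(x)$ in $G_{(X,r)}$. Since $(X,r)$ is injective, $\iota$ is injective, and this equality therefore already holds in $X$. Hence $r(t(x),x) = (\sigma_{t(x)}(x),\tau_x(t(x))) = (t(x),x)$ for all $x \in X$.

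It remains to see that $t$ is a bijection. From $\sigma_{t(x)}(x) = t(x)$ one recovers $x$ from $t(x)$ by $x = \sigma_{t(x)}^{-1}(t(x))$, so $t$ is injective; as $X$ is finite, $t$ is bijective, and $(X,r,t)$ is a biquandle. I do not expect any real obstacle here: the only point that needs care is to notice that the injectivity hypothesis is precisely what upgrades the identity $t(x) = \sigma_{t(x)}(x)$ --- which a priori lives only in $G_{(X,r)}$ --- to an identity in $X$. (Alternatively, one could combine the isomorphism $G_{(X,r)} \cong G_{\BQ(X,r)}$ of Proposition~\ref{pro:birack_biquandle} with the fact that $\iota$ factors through the quotient $X \twoheadrightarrow \BQ(X,r)$ to deduce that this quotient map is injective, hence bijective, so that $(X,r)$ coincides with its own induced biquandle.)
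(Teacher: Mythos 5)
Your proof is correct and is essentially the argument the paper has in mind: the remark that Lemma~\ref{lem:Inj_Biqu} follows from ``arguments from the proof of Proposition~\ref{pro:birack_biquandle}'' refers precisely to the two steps you spell out, namely that $y=\tau_y(x)$ forces $x=\sigma_x(y)$ in $G_{(X,r)}$ (hence in $X$ by injectivity), and that $t$ is injective because $x$ reconstructs as $\sigma_{t(x)}^{-1}(t(x))$, with finiteness giving bijectivity. Your parenthetical alternative via $G_{(X,r)}\cong G_{\BQ(X,r)}$ and the triviality of the quotient $X\twoheadrightarrow X/\!\bumpeq$ is the same argument in quotient form.
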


Our induced biquandle construction in fact defines a functor $\BQ \colon \SolCat \to \BQCat$, where the category $\SolCat$ (resp. $\BQCat$) of solutions (resp. biquandles) and their morphisms is defined in the obvious way. This functor is a retraction for the inclusion functor $\BQCat \to \SolCat$. Indeed, if $(X,r)$ is already a biquandle, then $\BQ (X,r) = (X,r)$.

For a solution $(X,r_{\op})$ associated with a rack, the quotient $X/\!\bumpeq$ simply identifies $x$ with $x \op x$ for all $x \in X$. The operation $\op$ induces a quandle operation $\op'$ on $X/\!\bumpeq$. The quandle $\Q(X,\op) = (X/\!\bumpeq,\, \op')$ is the \emph{induced quandle} of $(X,\op)$. It appeared in \cite{Brieskorn,AndrGr}. At the level of solutions and structure groups, one has
\begin{align*}
\BQ(\Sol(X,\op)) &\cong \Sol(\Q(X,\op)),\\
G_{(X,\op)}  &\cong G_{\Q(X,\op)}. 
\end{align*}
The functoriality of all our constructions allows us to lift these identities to relations between the functors $\BQ$, $\Q \colon \Rack \to \Quandle$, $\Sol \colon \Rack \to \SolCat$ and its restriction $\Sol' \colon \Quandle \to \BQCat$. The last two functors send a rack or quandle $(X,\op)$ to $(X,r_{\op})$.

\begin{pro}\label{pro:commdiag_Sol_Q_BQ}
The following functors assemble into a commutative diagram: 
\[\xymatrix@!0 @R=1cm @C=1.5cm{
\Rack \ar[rr]^-{\Q} \ar[d]^{\Sol} \ar@/_4pc/[]!<-3ex,1ex>;[ddr]!<-3ex,0ex> && \Quandle \ar[d]^{\Sol'} \ar@/^4pc/[]!<5ex,1ex>;[ddl]!<3ex,0ex>\\
\SolCat \ar[rr]^-{\BQ} \ar[dr] && \BQCat \ar[dl]\\
&\Grp &}\]
Here all unlabelled arrows correspond to the structure group functors.
\end{pro}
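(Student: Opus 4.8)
The plan is to assemble the statement from ingredients already in place: the functoriality of each construction involved, the object-level identities recorded just before the proposition, and the final line of the proof of Proposition~\ref{pro:birack_biquandle}. No genuinely new computation is needed; the content is the bookkeeping of functors and natural transformations, and I expect no serious obstacle.

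\emph{Functoriality and the outer triangles.} First I would record that $\Sol$, $\Sol'$ and the structure-group functors are functorial: a morphism of racks (resp.\ quandles) is tautologically a morphism of the associated SD solutions, while a morphism $\phi\colon(X,r)\to(Y,s)$ of solutions sends the defining relations $xy=\sigma_x(y)\tau_y(x)$ of $G_{(X,r)}$ to those of $G_{(Y,s)}$, hence induces $G_\phi\colon G_{(X,r)}\to G_{(Y,s)}$. Since by definition the structure group of a rack (resp.\ quandle, biquandle) \emph{is} that of the associated solution, the unlabelled arrow $\Rack\to\Grp$ is literally the composite $\Rack\xrightarrow{\Sol}\SolCat\to\Grp$, and likewise for $\Quandle$; so the two triangles with vertex $\Grp$ and legs $\Sol,\Sol'$ commute on the nose. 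Functoriality of $\BQ$ is the universal property of Remark~\ref{rem:IndBiquUniv}, and that of $\Q$ follows the same way (a rack morphism preserves $x\mapsto x\op x$, hence descends to $X/\!\bumpeq$).

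\emph{The square.} Next I would check that $\BQ\circ\Sol=\Sol'\circ\Q$ as functors $\Rack\to\BQCat$, in fact strictly. For a rack $(X,\op)$ the solution $(X,r_{\op})$ has $\sigma_x=\Id$ and $\tau_y=\rho_y$, so the relation $\bumpeq$ of Proposition~\ref{pro:birack_biquandle} sets $x\bumpeq\sigma_x(y)=y$ exactly when $y=\tau_y(x)=x\op y$; since $z\op y=y$ forces $\rho_y\rho_z=\rho_{z\op y}\rho_y=\rho_y\rho_y$ (by \eqref{eqn:tau_action}), hence $\rho_z=\rho_y$ and $z\op z=\rho_z(z)=\rho_y(z)=y$, these generating pairs are precisely the pairs $(x,\,x\op x)$ defining $\Q(X,\op)$, as already observed in the text. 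Thus both composites have underlying set $X/\!\bumpeq$, the induced solution $(r_{\op})'$ on it is visibly $r_{\op'}$ for the induced quandle operation $\op'$, and on morphisms both composites act as the map induced on $X/\!\bumpeq$; so the square commutes strictly.

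\emph{The inner triangle and conclusion.} Finally I would upgrade the isomorphism $G_{(X,r)}\xrightarrow{\sim}G_{\BQ(X,r)}$ of Proposition~\ref{pro:birack_biquandle} to a natural one, by observing that it is induced by the canonical quotient maps $\pi_{(X,r)}\colon(X,r)\twoheadrightarrow\BQ(X,r)$; these form a natural transformation $\mathrm{Id}_{\SolCat}\Rightarrow\iota\circ\BQ$, with $\iota\colon\BQCat\hookrightarrow\SolCat$ the inclusion, by Remark~\ref{rem:IndBiquUniv}, so applying the structure-group functor yields a natural transformation from the functor $\SolCat\to\Grp$ to $(\BQCat\to\Grp)\circ\BQ$ which is a componentwise isomorphism by Proposition~\ref{pro:birack_biquandle}. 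Pasting this triangle, the square, and the two outer triangles then gives the remaining face $\Rack\xrightarrow{\Q}\Quandle\to\Grp$ — indeed $G_{\Q(X,\op)}$, being the structure group of the biquandle $\Sol'(\Q(X,\op))=\BQ(\Sol(X,\op))$, is naturally isomorphic to $G_{\Sol(X,\op)}=G_{(X,\op)}$ — together with the coherence of all paths into $\Grp$, completing the diagram. The one point demanding any care is this last upgrade, namely verifying that the isomorphism of Proposition~\ref{pro:birack_biquandle} is exactly the one induced by the quotient map, so that naturality comes for free; everything else is a routine diagram chase.
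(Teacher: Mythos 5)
Your proposal is correct and follows essentially the same route the paper takes (the paper gives no separate proof, relying on the functoriality of the constructions, the identification of $\bumpeq$ with the $x\sim x\op x$ relation for SD solutions, and the fact that the isomorphism of Proposition~\ref{pro:birack_biquandle} is induced by the quotient map, hence natural by Remark~\ref{rem:IndBiquUniv}). You merely spell out these details, including the small verification via \eqref{eqn:tau_action} and Lemma~\ref{lem:SqPropertiesRack} that the generating pairs of $\bumpeq$ are exactly $(x,x\op x)$, which is exactly the intended argument.
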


\section{Structure racks for YBE solutions}\label{s:StrRacks}

We now describe retractions for the inclusion functors $\Sol$ above. As explained in \cite{Sol,LYZ,LebVen}, to any solution $(X,r)$ one can associate its (right) \emph{structure rack} $\R(X,r) = (X,\op_r)$, where 
\[x \op_r y = \tau_y \sigma_{\tau_x^{-1}(y)}(x) = \tau_y \htau_y^{-1}(x)\]
for all $x,y \in X$. Recall that $\htau$ is the right component of the map $r^{-1}$. In the literature, $\R(X,r)$ is also called the \emph{associated} or \emph{derived rack} of $(X,r)$. It has a symmetric left version $(X,\lop_r)$, which we will show to be isomorphic to $\R(X,r)$ (Proposition~\ref{pro:LeftRightStrRack}). The graphical definition of structure racks from Fig.~\ref{fig:str_racks} makes this construction more intuitive.
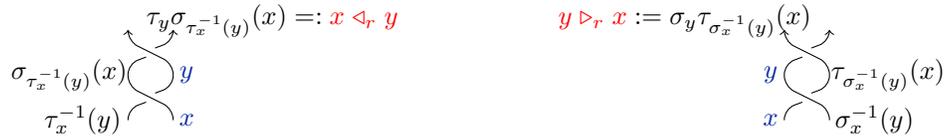
\begin{figure}[h]\centering
\begin{tikzpicture}[scale=0.6]
\draw [rounded corners](0,0)--(0,0.25)--(0.4,0.4);
\draw [->, rounded corners](0.6,0.6)--(1,0.75)--(1,1.25)--(0,1.75)--(0,2);
\draw [rounded corners](1,0)--(1,0.25)--(0,0.75)--(0,1.25)--(0.4,1.4);
\draw [->, rounded corners](0.6,1.6)--(1,1.75)--(1,2);
\node  at (1.3,0){$\color{myblue} x$};
\node  at (1.3,1){$\color{myblue} y$};
\node  at (3.2,2.2){$\tau_y \sigma_{\tau_x^{-1}(y)}(x) =: \color{red} x \op_r y$};
\node  at (-1,0){$\tau_x^{-1}(y)$};
\node  at (-1.3,1){$\sigma_{\tau_x^{-1}(y)}(x)$};
\end{tikzpicture}\hspace*{1.8cm}
\begin{tikzpicture}[scale=0.6]
\draw [rounded corners](0,0)--(0,0.25)--(0.4,0.4);
\draw [->, rounded corners](0.6,0.6)--(1,0.75)--(1,1.25)--(0,1.75)--(0,2);
\draw [rounded corners](1,0)--(1,0.25)--(0,0.75)--(0,1.25)--(0.4,1.4);
\draw [->, rounded corners](0.6,1.6)--(1,1.75)--(1,2);
\node  at (-0.3,0){$\color{myblue} x$};
\node  at (-0.3,1){$\color{myblue} y$};
\node  at (-2.2,2.2){$\color{red} y \lop_r x \color{black}:= \sigma_y \tau_{\sigma_x^{-1}(y)}(x)$};
\node  at (2,0){$\sigma_x^{-1}(y)$};
\node  at (2.3,1){$\tau_{\sigma_x^{-1}(y)}(x)$};
\end{tikzpicture}
\caption{The colors $x,y$ uniquely determine all colors in both diagrams; the upper right/left color defines the right/left structure rack of a solution.}\label{fig:str_racks}
\end{figure} 

The structure rack captures basic properties of the original solution:
\begin{itemize}
\item $(X,r)$ is involutive $\; \Longleftrightarrow \; (X,\op_r)$ is trivial: $x \op_r y = x$ for all $x,y \in X$;
\item $(X,r)$ is a biquandle $\; \Longleftrightarrow \; (X,\op_r)$ is a quandle;
\item the actions of the braid groups $B_n$ on the tensor powers $X^{\times n}$ induced by $r$ and by $\op_r$ are isomorphic (without the solutions $(X,r)$ and $(X,r_{\op_r})$ being necessarily isomorphic).
\end{itemize}

The structure rack construction yields a functor $\R \colon \SolCat \to \Rack$, which is a left inverse of the functor $\Sol \colon \Rack \to \SolCat$. They restrict to functors $\R' : \BQCat \rightleftarrows \Quandle : \Sol'$, satisfying $\R' \circ \Sol' = \Id_{\Quandle}$.

We next show that the functors $\R$ and $\R'$ intertwine the functors $\Q$ and $\BQ$. Some technical results are first due:

\begin{lem}\label{lem:SqProperties}
Let $(X,r)$ be a solution. The maps
\begin{align*}
T \colon X &\to X,& \text{ and }\qquad\qquad U \colon X &\to X,\\
y &\mapsto \tau_y^{-1} (y);  & x &\mapsto \sigma_x^{-1}(x \lop_r x)
\end{align*}
are mutually inverse. So are the maps
\begin{align*}
T^- \colon X &\to X,& \text{ and }\qquad\qquad U^- \colon X &\to X,\\
y &\mapsto \sigma_y^{-1} (y); & x &\mapsto \tau_x^{-1}(x \op_r x).
\end{align*}
\end{lem}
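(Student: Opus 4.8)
The plan is to reduce the whole statement to one short computation — that $T \circ U = \Id_X$ — together with the finiteness of $X$.

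First I would put $U$ and $U^-$ into a more usable form. Unfolding the definition $y \lop_r x = \sigma_y \tau_{\sigma_x^{-1}(y)}(x)$ at $y=x$ gives $x \lop_r x = \sigma_x\bigl(\tau_{\sigma_x^{-1}(x)}(x)\bigr)$, so $U(x) = \tau_{\sigma_x^{-1}(x)}(x)$; dually, unfolding $x \op_r y = \tau_y \sigma_{\tau_x^{-1}(y)}(x)$ at $y=x$ gives $U^-(x) = \sigma_{\tau_x^{-1}(x)}(x)$. The reason for this rewriting: if we set $q = \sigma_x^{-1}(x)$, then $\sigma_x(q) = x$, so the crossing $r(x,q)$ equals exactly $(x, U(x))$ — its left output agrees with its left input. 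Symmetrically, with $p = \tau_x^{-1}(x)$ one gets $r(p,x) = (U^-(x), x)$.

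Next I would feed these degenerate crossings into the braiding identities \eqref{eqn:tau_action}--\eqref{eqn:sigma_action}. Applying \eqref{eqn:tau_action} to the pair $(x,q)$ and using $\sigma_x(q)=x$ gives $\tau_q\tau_x = \tau_{\tau_q(x)}\tau_x = \tau_{U(x)}\tau_x$; cancelling the bijection $\tau_x$ yields $\tau_q = \tau_{U(x)}$, whence $\tau_{U(x)}(x) = \tau_q(x) = U(x)$, i.e. $T(U(x)) = \tau_{U(x)}^{-1}(U(x)) = x$. Thus $T \circ U = \Id_X$. The mirror argument — applying \eqref{eqn:sigma_action} to $(p,x)$ with $\tau_x(p) = x$ — gives $\sigma_p = \sigma_{U^-(x)}$, hence $\sigma_{U^-(x)}(x) = U^-(x)$ and $T^-(U^-(x)) = \sigma_{U^-(x)}^{-1}(U^-(x)) = x$, so $T^- \circ U^- = \Id_X$.

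Finally, since $X$ is finite, $T \circ U = \Id_X$ forces $U$ to be injective, hence bijective, and then $T = U^{-1}$; in particular $U \circ T = \Id_X$ as well. The same reasoning applies to the pair $(T^-, U^-)$. I do not expect a genuine obstacle here: the only mildly delicate point is extracting the closed forms $U(x) = \tau_{\sigma_x^{-1}(x)}(x)$ and $U^-(x) = \sigma_{\tau_x^{-1}(x)}(x)$ from the diagrammatic definitions of $\op_r$ and $\lop_r$ with the correct inverse maps in the subscripts; after that, the proof is two lines of \eqref{eqn:tau_action}/\eqref{eqn:sigma_action} plus the finite-set cardinality trick.
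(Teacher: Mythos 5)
Your proof is correct, but it is organized differently from the paper's. You establish only the composites $T\circ U=\Id_X$ and $T^-\circ U^-=\Id_X$ directly, and you do so using only the map $r$ itself: the closed forms $U(x)=\tau_{\sigma_x^{-1}(x)}(x)$ and $U^-(x)=\sigma_{\tau_x^{-1}(x)}(x)$ identify the degenerate crossings $r(x,\sigma_x^{-1}(x))=(x,U(x))$ and $r(\tau_x^{-1}(x),x)=(U^-(x),x)$, after which one application of \eqref{eqn:tau_action} (resp.\ \eqref{eqn:sigma_action}) and cancellation of a bijection gives $\tau_{U(x)}=\tau_{\sigma_x^{-1}(x)}$ (resp.\ $\sigma_{U^-(x)}=\sigma_{\tau_x^{-1}(x)}$), hence $T(U(x))=x$ (resp.\ $T^-(U^-(x))=x$); the remaining composite is then extracted from finiteness of $X$. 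The paper instead proves both implications $x=T(y)\Leftrightarrow y=U(x)$ directly, using the components $\hsigma,\htau$ of $r^{-1}$ and the $\hsigma$-half of \eqref{eqn:sigma_action}; its second direction is essentially your computation (your $q=\sigma_x^{-1}(x)$ is the paper's auxiliary element $z$), only reached through $r^{-1}$ rather than through the closed form of $U$. What you gain is brevity and independence from $r^{-1}$; what the paper's version buys is independence from the finiteness of $X$, which is exactly what the remark at the end of Section~\ref{s:StrRacks} exploits to assert that the results of Sections~\ref{s:StrGroups}--\ref{s:StrRacks} hold for infinite solutions — your argument, as written, would need the missing composites proved directly to cover that case. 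Under the paper's standing convention that solutions are finite, your proof is complete as it stands.
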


In other words, for any $x,y \in X$ the following equivalences hold:
\begin{align*}
\tau_y(x) = y \; &\Longleftrightarrow \; \sigma_x(y) = x \lop_r x,\\
\sigma_x(y) = x \; &\Longleftrightarrow \; \tau_y(x) = y \op_r y.
\end{align*} 

\begin{proof}
We will consider only the first pair of maps. The second one can be treated in a symmetric way; graphically, this is the vertical mirror symmetry (cf. Fig.~\ref{fig:SqProperties}).

Suppose that $x= T(y)$, that is, $\tau_y(x) = y$. Then $\hsigma_x= \hsigma_{\sigma_x(y)}$ by~\eqref{eqn:sigma_action}, and 
\[\hsigma_x (y)= \hsigma_{\sigma_x(y)} (y) = \hsigma_{\sigma_x(y)} (\tau_y(x)) = x,\]
since the map $(x,y) \mapsto (\hsigma_x(y),\htau_y(x))$ is the inverse of $(x,y) \mapsto (\sigma_x(y),\tau_y(x))$. This yields $\sigma_x(y) = x \lop_r x$, as shown in Fig.~\ref{fig:SqProperties}. In other words, $y=U(x)$.
\begin{figure}[h]\centering
\begin{tikzpicture}[xscale=.7,yscale=.6]
\draw [rounded corners](0,-1)--(0,-.75)--(0.4,-.6);
\draw [rounded corners](0.6,-.4)--(1,-.25)--(1,0);
\draw [rounded corners](1,-1)--(1,-.75)--(0,-.25)--(0,0);
\draw [rounded corners](0,0)--(0,0.25)--(0.4,0.4);
\draw [rounded corners,->](0.6,0.6)--(1,0.75)--(1,1);
\draw [rounded corners,->](1,0)--(1,0.25)--(0,0.75)--(0,1);
\node  at (-.2,0) {$\color{myblue} x$};
\node  at (1.2,0) {$\color{myblue} y$};
\node  at (-1.2,1.4) {${\color{red} x \lop_r x = \,} \sigma_x(y)$};
\node  at (1.2,1.4) {$\color{myblue} y$};
\node  at (-.2,-1) {$x$};
\end{tikzpicture} \hspace*{2cm}
\begin{tikzpicture}[xscale=.7,yscale=.6]
\draw [rounded corners](0,-1)--(0,-.75)--(0.4,-.6);
\draw [rounded corners](0.6,-.4)--(1,-.25)--(1,0);
\draw [rounded corners](1,-1)--(1,-.75)--(0,-.25)--(0,0);
\draw [rounded corners](0,0)--(0,0.25)--(0.4,0.4);
\draw [rounded corners,->](0.6,0.6)--(1,0.75)--(1,1);
\draw [rounded corners,->](1,0)--(1,0.25)--(0,0.75)--(0,1);
\node  at (-.2,0) {$\color{myblue} x$};
\node  at (1.2,0) {$\color{myblue} y$};
\node  at (2.2,1.4) {$\tau_y(x) {= \color{red} y \op_r y}$};
\node  at (-.2,1.4) {$\color{myblue} x$};
\node  at (1.2,-1) {$y$};
\end{tikzpicture}
\caption{Left: $\tau_y(x) = y$ implies $\sigma_x(y) = x \lop_r x$. Right: $\sigma_x(y) = x$ implies $\tau_y(x) = y \op_r y$.}\label{fig:SqProperties}
\end{figure}

Now, given $y=U(x)$, we shall deduce $x= T(y)$. Rewrite $y=U(x)=\sigma_x^{-1}(x \lop_r x)$ as $\sigma_x(y)=x \lop_r x$, hence $r(x,y)=(x \lop_r x,\tau_y(x))$. From the definition of $\lop_r$, we deduce $r^{-1}(x,y)=(x,z)$, where $z$ satisfies $\tau_z(x)=y$. Again, \eqref{eqn:tau_action} implies $\tau_y=\tau_z$, so $\tau_y(x)=\tau_z(x)=y$. Thus $r(x,y)=(x \lop_r x,y)$, hence $x=\tau_y^{-1} (y)= T(y)$.
\end{proof}

\begin{exa}
If $(X,r)$ is a biquandle, then $T$ is the map $t$ defined by~\eqref{eqn:biquandle}.
\end{exa}

\begin{exa}
Let $(X,\op)$ be a rack. For the solution $(X,r_{\op})$, one has $T=\Sq^{-1}$, where the map $\Sq$ is defined in Lemma~\ref{lem:SqPropertiesRack}. For $(X,r'_{\op})$, the $T$-map is just $\Id_X$.
\end{exa}

\begin{lem}\label{lem:SqLeftRight}
Let $(X,r)$ be a solution. For any $x \in X$, one has $x \op_r x = x \lop_r x$.
\end{lem}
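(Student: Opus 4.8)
\emph{Sketch of the intended proof.} The plan is to reduce the identity to a single application of the Yang--Baxter equation. First I would unfold both sides. By the defining formula of the structure rack, $x \op_r x = \tau_x \sigma_{\tau_x^{-1}(x)}(x)$, and symmetrically, reading off the right-hand diagram of Fig.~\ref{fig:str_racks}, $x \lop_r x = \sigma_x \tau_{\sigma_x^{-1}(x)}(x)$. Put $a = \tau_x^{-1}(x)$ and $b = \sigma_x^{-1}(x)$, so that $\tau_x(a) = x$ and $\sigma_x(b) = x$. The first relation says $r(a,x) = (p, x)$ with $p := \sigma_a(x)$, and the second says $r(x,b) = (x, q)$ with $q := \tau_b(x)$; moreover $x \op_r x = \tau_x(p)$ and $x \lop_r x = \sigma_x(q)$. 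So it suffices to prove $\tau_x(p) = \sigma_x(q)$.

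Next I would apply the YBE $r_1 r_2 r_1 = r_2 r_1 r_2$ to the triple $(a,x,b)$. On the one side, $r_1(a,x,b) = (p,x,b)$, then $r_2(p,x,b) = (p,x,q)$, then $r_1(p,x,q) = (\sigma_p(x),\tau_x(p),q)$. On the other side, $r_2(a,x,b) = (a,x,q)$, then $r_1(a,x,q) = (p,x,q)$, then $r_2(p,x,q) = (p,\sigma_x(q),\tau_q(x))$. Comparing the middle coordinates of the two outcomes yields exactly $\tau_x(p) = \sigma_x(q)$, i.e. $x \op_r x = x \lop_r x$. As a bonus, the outer coordinates give $\sigma_p(x) = p$ and $\tau_q(x) = q$, which dovetail with Lemma~\ref{lem:SqProperties}.

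Graphically this is nothing but the Reidemeister~$\mathrm{III}$ move applied to the configuration obtained by stacking the two crossings $r(a,x) = (p,x)$ and $r(x,b) = (x,q)$ along the strand carrying $x$. I expect the only delicate point to be the bookkeeping: correctly reading from Fig.~\ref{fig:str_racks} which coordinate of which crossing represents $x \lop_r x$ (as opposed to $x \op_r x$), and placing $a$ and $b$ on the correct sides so that the three Reidemeister moves genuinely thread through the common intermediate word $(p,x,q)$. An alternative route via the equivalences of Lemma~\ref{lem:SqProperties} (for instance $\tau_x(a) = x \Rightarrow \sigma_a(x) = a \lop_r a$) leads to a recursive-looking relation $x \op_r x = \tau_x(a \lop_r a)$ that I do not see how to close directly, so the one-step YBE argument above seems the cleaner path.
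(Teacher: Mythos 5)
Your argument is correct, and it takes a genuinely more economical route than the paper's proof. You prove the identity by a single application of the YBE to the triple $(a,x,b)=(\tau_x^{-1}(x),\,x,\,\sigma_x^{-1}(x))$: both composites pass through the common intermediate $(p,x,q)$ with $p=\sigma_a(x)$, $q=\tau_b(x)$, so the YBE forces $r_1(p,x,q)=r_2(p,x,q)$, and the middle coordinates give $\tau_x(p)=\sigma_x(q)$, which by the defining formulas $x\op_r x=\tau_x\sigma_{\tau_x^{-1}(x)}(x)$ and $x\lop_r x=\sigma_x\tau_{\sigma_x^{-1}(x)}(x)$ is exactly the claim; your reading of Fig.~\ref{fig:str_racks} and the coordinate bookkeeping are correct. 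The paper proceeds differently: it first uses Lemma~\ref{lem:SqProperties} to produce the unique $y$ with $r(x,y)=(x\lop_r x,\,y)$, sets $z=\tau_{x\lop_r x}^{-1}(x)$, and chases the longer identity $r_2r_2r_1r_2=r_2r_1r_2r_1=r_1r_2r_1r_1$ on $(z,x,y)$, invoking non-degeneracy twice to cancel entries. Your route avoids Lemma~\ref{lem:SqProperties} entirely and uses non-degeneracy only to form $\tau_x^{-1}(x)$ and $\sigma_x^{-1}(x)$, at the price of a less symmetric-looking but shorter computation; what the paper's version buys is mainly that it stays within the graphical setup of Figs~\ref{fig:SqProperties}--\ref{fig:SqProperties2} and reuses the $T$/$U$ correspondence that is needed elsewhere anyway. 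Finally, your ``bonus'' outer-coordinate identities $\sigma_p(x)=p$ and $\tau_q(x)=q$ are consistent rather than new: via Lemma~\ref{lem:SqProperties} they are equivalent to $\tau_x(p)=x\op_r x$ and $\sigma_x(q)=x\lop_r x$, which hold by definition, so they serve as a check on the computation.
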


\begin{proof} According to Lemma~\ref{lem:SqProperties}, $\forall x \in X, \, \exists ! y \in X$ satisfying $r(x,y) = (x \lop_r x,\, y)$. Put $z=\tau_{x \lop_r x}^{-1}(x)$. By the YBE, one has
\[r_2r_2r_1r_2(z,x,y) = r_2r_1r_2r_1(z,x,y) = r_1r_2r_1r_1(z,x,y).\]
Let us study the three expressions in more detail. First,
\[(z,x,y) \overset{r_2}{\mapsto} (z,x \lop_r x,y) \overset{r_1}{\mapsto} (u,x,y) \overset{r_2}{\mapsto} (u,x \lop_r x,y) \overset{r_2}{\mapsto} (u,x \lop_r x,y \op_r y),\]
where $u=\sigma_z(x \lop_r x)$, and we used the consequence $\sigma_x=\sigma_{x \lop_r x}$ of~\eqref{eqn:sigma_action} to get $r(x \lop_r x,y) = (x \lop_r x,y \op_r y)$. Further, $r_1r_2r_1(z,x,y) = r_2r_1r_2(z,x,y) = (u,x \lop_r x,y)$, so
\[(z,x,y) \overset{r_1}{\mapsto} (v,x',y) \overset{r_2}{\mapsto} (v,w,y) \overset{r_1}{\mapsto} (u,x \lop_r x,y) \overset{r_2}{\mapsto} (u,x \lop_r x,y \op_r y)\]
for certain $x',v,w \in X$. Since $r$ is non-degenerate, $r(x',y) = (w,y)$ implies $x'=x$ and $w=x \lop_r x$.  Finally, 
\[(z,x,y) \overset{r_1}{\mapsto} (v,x,y) \overset{r_1}{\mapsto} (s,x \op_r x,y) \overset{r_2}{\mapsto} (s,t,y \op_r y) \overset{r_1}{\mapsto} (u,x \lop_r x,y \op_r y)\]
for certain $s,t \in X$. The key relation here is $r(x \op_r x,y) = (t,y \op_r y)$. Together with $r(x \lop_r x,y) = (x \lop_r x,y \op_r y)$ and the non-degeneracy of~$r$, it implies $x \op_r x = x \lop_r x$, as desired.

These computations are summarized in Fig.~\ref{fig:SqProperties2}. 
\begin{figure}[h]\centering
\begin{tikzpicture}[scale=1]
\draw [rounded corners](1,1)--(1,1.25)--(1.4,1.4);
\draw [rounded corners](1.6,1.6)--(2,1.75)--(2,3.25)--(1,3.75)--(1,4.25)--(1.4,4.4);
\draw [rounded corners,->](1.6,4.6)--(2,4.75)--(2,5);
\draw [rounded corners](0,1)--(0,2.25)--(0.4,2.4);
\draw [rounded corners](0.6,2.6)--(1,2.75)--(1,3.25)--(1.4,3.4);
\draw [rounded corners,->](1.6,3.6)--(2,3.75)--(2,4.25)--(1,4.75)--(1,5);
\draw [rounded corners,->](2,1)--(2,1.25)--(1,1.75)--(1,2.25)--(0,2.75)--(0,5);
\node  at (0,.8) {$z$};
\node  at (1,.8) {$x$};
\node  at (2,.8) {$\color{myblue} y$};
\node  at (2.2,2.5) {$\color{myblue} y$};
\node  at (2.2,4) {$\color{myblue} y$};
\node  [fill=white] at (1,4) {$\scriptstyle x \lop x$};
\node  [fill=white] at (1,3) {$x$};
\node  [fill=white] at (1,2) {$\scriptstyle x \lop x$};
\node  at (0,5.2) {$u$};
\node  at (1,5.2) {$x \lop x$};
\node  at (2.2,5.2) {$y \op y$};
\node  at (3.2,3){\Large $\leftrightarrow$\hspace*{.2cm}};
\end{tikzpicture}
\begin{tikzpicture}[scale=1]
\draw [rounded corners](0,0)--(0,0.25)--(0.4,0.4);
\draw [rounded corners](0.6,0.6)--(1,0.75)--(1,1.25)--(1.4,1.4);
\draw [rounded corners,->](1.6,1.6)--(2,1.75)--(2,3.25)--(1,3.75)--(1,4);
\draw [rounded corners](1,0)--(1,0.25)--(0,0.75)--(0,2.25)--(0.4,2.4);
\draw [rounded corners](0.6,2.6)--(1,2.75)--(1,3.25)--(1.4,3.4);
\draw [rounded corners,->](1.6,3.6)--(2,3.75)--(2,4);
\draw [rounded corners,->](2,0)--(2,1.25)--(1,1.75)--(1,2.25)--(0,2.75)--(0,4);
\node  at (3.2,2){\Large $\leftrightarrow$\hspace*{.2cm}};
\node  at (0,-.2) {$z$};
\node  at (1,-.2) {$x$};
\node  at (2,-.2) {$y$};
\node  at (-.2,1.5) {$v$};
\node  [fill=white] at (1,1) {$x$};
\node  [fill=white] at (1,2) {$\scriptstyle x \lop x$};
\node  [fill=white] at (1,3) {$\scriptstyle x \lop x$};
\node  at (2.2,3) {$y$};
\node  at (0,4.2) {$u$};
\node  at (1,4.2) {$x \lop x$};
\node  at (2.2,4.2) {$y \op y$};
\fill [myred!20, opacity=.2] (0.5,2.8) rectangle (2.8,4.5);
\end{tikzpicture}
\begin{tikzpicture}[scale=1]
\draw [rounded corners](1,-1)--(1,-.75)--(0,-.25)--(0,0.25)--(0.4,0.4);
\draw [rounded corners](0.6,0.6)--(1,0.75)--(1,1.25)--(1.4,1.4);
\draw [rounded corners,->](1.6,1.6)--(2,1.75)--(2,3);
\draw [rounded corners](0,-1)--(0,-.75)--(0.4,-.6);
\draw [rounded corners](0.6,-.4)--(1,-.25)--(1,0.25)--(0,0.75)--(0,2.25)--(0.4,2.4);
\draw [rounded corners,->](0.6,2.6)--(1,2.75)--(1,3);
\draw [rounded corners,->](2,-1)--(2,1.25)--(1,1.75)--(1,2.25)--(0,2.75)--(0,3);
\node  at (0,-1.2) {$z$};
\node  at (1,-1.2) {$x$};
\node  at (2,-1.2) {$y$};
\node  at (-.2,0) {$v$};
\node  at (-.2,1.5) {$s$};
\node  [fill=white] at (1,0) {$x$};
\node  [fill=white] at (1,1) {$\scriptstyle x \op x$};
\node  [fill=white] at (1,2) {$t$};
\node  at (2.2,1) {$y$};
\node  at (2.5,2) {$y \op y$};
\node  at (0,3.2) {$u$};
\node  at (1,3.2) {$x \lop x$};
\node  at (2.2,3.2) {$y \op y$};
\fill [myred!20, opacity=.2] (0.5,.8) rectangle (3,2.2);
\end{tikzpicture}
\caption{The three colors $y,y,y$ in the left diagram uniquely determine all colors in all the diagrams. Comparing the two highlighted crossings, one obtains $x \op_r x = x \lop_r x$. Here $z=\tau_{x \lop_r x}^{-1}(x)$.}\label{fig:SqProperties2}
\end{figure}
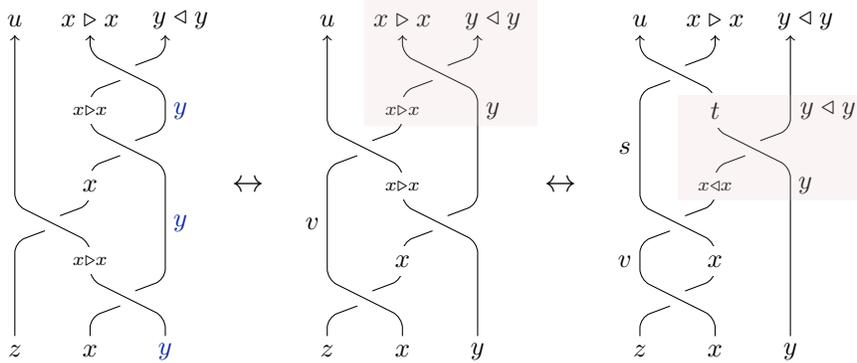
\end{proof}

\begin{lem}\label{lem:SqPropertiesRack}
Given a rack $(X,\op)$, consider the map $\Sq \colon X \to X, \, x \mapsto x \op x$. For any $x,y \in X$, one has:
\begin{enumerate}
 \item $\Sq(x \op y) = \Sq(x)\op y$;
 \item $x \op \Sq(y) = x \op y$;
 \item the map $\Sq$ is bijective.
\end{enumerate}
\end{lem}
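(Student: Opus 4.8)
The plan is to prove the three assertions of Lemma~\ref{lem:SqPropertiesRack} in order, using only the rack axioms (self-distributivity and bijectivity of right translations $\rho_y$), since these are purely equational statements about $(X,\op)$ and do not need the solution-theoretic machinery. For part~(1), I would simply expand: $\Sq(x\op y) = (x\op y)\op(x\op y)$, and apply self-distributivity $(a\op b)\op c = (a\op c)\op(b\op c)$ with $a=x$, $b=y$, $c=x\op y$ read backwards — more precisely, self-distributivity gives $(x\op y)\op(x\op y)$ is awkward directly, so instead I would write $\Sq(x)\op y = (x\op x)\op y = (x\op y)\op(x\op y) = \Sq(x\op y)$, the middle equality being exactly one instance of the self-distributive law. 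That is a one-line computation.

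For part~(2), the claim $x\op\Sq(y) = x\op y$, i.e. $x\op(y\op y) = x\op y$, I would translate into right translations: it says $\rho_{y\op y} = \rho_y$ as maps $X\to X$. Using self-distributivity in the form $\rho_z\rho_y = \rho_{y\op z}\rho_z$ (which is the identity $(x\op y)\op z = (x\op z)\op(y\op z)$ rewritten), specialize $z=y$ to get $\rho_y\rho_y = \rho_{y\op y}\rho_y$; since $\rho_y$ is bijective we may cancel it on the right, yielding $\rho_y = \rho_{y\op y}$, which is exactly (2). Part~(3) then follows: an inverse for $\Sq$ can be built using bijectivity of each $\rho_y$ together with (1) and (2). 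Concretely, define $\Sq'$ by $\Sq'(x) = \rho_{T^-(x)}^{-1}(\ldots)$ — actually the cleanest route is to observe, via (1), that $\Sq$ commutes with every $\rho_y$ in a suitable twisted sense, but more directly: to invert $\Sq$, note that $x = \Sq(a)$ should be solvable for $a$. Since $\Sq(a) = a\op a = \rho_a(a)$, and by (2) $\rho_a = \rho_{\Sq(a)} = \rho_x$, we get $a = \rho_x^{-1}(x)$; conversely one checks $\Sq(\rho_x^{-1}(x)) = x$ using (1) and (2). So $\Sq$ is bijective with inverse $x\mapsto \rho_x^{-1}(x) = \rho_x^{-1}(x)$, and in fact this matches the map $T=\Sq^{-1}$ mentioned in the earlier example for $(X,r_{\op})$, providing a consistency check.

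The main obstacle, such as it is, lies in part~(3): verifying that $x\mapsto \rho_x^{-1}(x)$ is a genuine two-sided inverse of $\Sq$ requires care because the subscript $x$ varies with the argument, so one cannot just cite bijectivity of a single $\rho$. The key identity to nail down is that if $a = \rho_x^{-1}(x)$ then $\rho_a = \rho_x$: this comes from (2) applied with $y=a$, giving $\rho_a = \rho_{a\op a} = \rho_{\Sq(a)}$, combined with the fact that $\Sq(a) = \rho_a(a)$ and the defining relation $\rho_a(a) = x$ once we know $\rho_a=\rho_x$ — which is slightly circular, so the honest argument instead fixes the order: from $a=\rho_x^{-1}(x)$ one has $a\op x = x$, whence by (2) (with the roles arranged correctly) and (1) one deduces $\Sq(a)\op x = \Sq(a\op x)$, etc. I would spell this dependency chain out explicitly rather than gesture at it, since that is where a careless proof would go wrong. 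Everything else is a direct substitution into the axioms.
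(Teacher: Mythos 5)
Your proposal is correct and follows essentially the same route as the paper: part~(1) is the same one-line self-distributivity computation, part~(2) is the paper's argument recast as cancelling the bijection $\rho_y$ on the right, and your explicit inverse $x \mapsto \rho_x^{-1}(x)$ for part~(3) repackages the paper's surjectivity step ($z \op x = x$ forces $\Sq(z)=x$ via (1)) together with an injectivity check that, as you indicate, reduces via (2) to $a \op \Sq(a) = a \op a = \Sq(a)$ and the injectivity of a single right translation. No gap; just make sure the two-sided-inverse verification is written out in the non-circular order you describe.
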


More properties of this map can be found in \cite{AndrGr,Szymik}.

\begin{proof}
\begin{enumerate}
\item By self-distributivity, $(x \op y) \op (x \op y) = (x \op x) \op y$.
\item Since the right translation $\bullet \mapsto \bullet \op y$ is bijective, there exists a unique $z \in X$ satisfying $z \op y = x$. Then 
\[x \op (y \op y) = (z \op y) \op (y \op y) = (z \op y) \op y = x \op y.\]
\item Again by the bijectivity of right translations, for any $x \in X$ there exists a unique $z \in X$ with $z \op x = x$. Then 
\[\Sq(z) \op x = \Sq(z \op x)= \Sq(x) = x \op x,\]
implying $\Sq(z) = x$. So, the map $\Sq$ is surjective. Further, if $\Sq(z) = \Sq(z')$ for some $z,z' \in X$, then by the preceding point, for all $u \in X$ one has
\[u \op z = u \op \Sq(z) = u \op \Sq(z') = u \op z',\]
implying $z \op z = z' \op z' = z' \op z$, hence $z = z'$. \qedhere
\end{enumerate}
\end{proof}

\begin{notation}
Given a solution $(X,r)$, we will write $\Sq(x) = x \op_r x = x \lop_r x$.
\end{notation}

From the three lemmas, one obtains the coloring pattern from Fig.~\ref{fig:tower}.
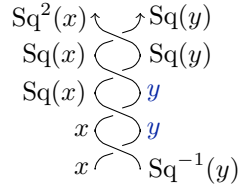
\begin{figure}[h]\centering
\begin{tikzpicture}[xscale=0.6,yscale=.5]
\foreach \i in {0,...,2}{
\draw [rounded corners](0,\i)--(0,\i+0.25)--(0.4,\i+0.4);
\draw [rounded corners](0.6,\i+0.6)--(1,\i+0.75)--(1,\i+1);
\draw [rounded corners](1,\i)--(1,\i+0.25)--(0,\i+0.75)--(0,\i+1);}
\foreach \i in {3}{
\draw [rounded corners](0,\i)--(0,\i+0.25)--(0.4,\i+0.4);
\draw [rounded corners,-<](0.6,\i+0.6)--(1,\i+0.75)--(1,\i+1);
\draw [rounded corners,-<](1,\i)--(1,\i+0.25)--(0,\i+0.75)--(0,\i+1);}
\node  at (2.2,0){$\Sq^{-1}(y)$};
\node [myblue] at (1.3,1){${y}$};
\node [myblue] at (1.3,2){${y}$};
\node  at (1.9,3){$\Sq(y)$};
\node  at (1.9,4){$\Sq(y)$};
\node at (-.3,1){$x$};
\node at (-.3,0){$x$};
\node  at (-.9,3){$\Sq(x)$};
\node  at (-.9,2){$\Sq(x)$};
\node  at (-1,4){$\Sq^2(x)$};
\end{tikzpicture}
\caption{This coloring pattern applies for any $y \in X$ and $x=T(y)$.}\label{fig:tower}
\end{figure} 

One also immediately deduces two important corollaries: 

\begin{pro}\label{pro:IndBiquandleViaSq}
Let $(X,r)$ be a solution. The equivalence relation~$\bumpeq$ on~$X$ defined in Proposition~\ref{pro:birack_biquandle} has the following alternative description:
\[x \bumpeq y\qquad \Longleftrightarrow \qquad y = \Sq^m(x) \, \text{ for some } m \in \Z.\]
\end{pro}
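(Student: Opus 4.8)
The plan is to prove the equivalence $x \bumpeq y \iff y = \Sq^m(x)$ for some $m \in \Z$ by showing that the relation $x \approx y :\iff \exists m \in \Z,\ y = \Sq^m(x)$ is itself an equivalence relation which both contains the generating relation of $\bumpeq$ and is contained in $\bumpeq$. Since $\Sq$ is a bijection by Lemma~\ref{lem:SqPropertiesRack}(3) (or rather its solution analogue, via Lemma~\ref{lem:SqProperties} and the \emph{Notation} fixing $\Sq(x) = x \op_r x = x \lop_r x$), the relation $\approx$ is reflexive ($m = 0$), symmetric (replace $m$ by $-m$), and transitive (add exponents); so $\approx$ is an equivalence relation. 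It therefore suffices to prove the two inclusions.

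For the inclusion $\bumpeq\ \subseteq\ \approx$: since $\bumpeq$ is by definition the \emph{smallest} equivalence relation identifying $x$ with $\sigma_x(y)$ whenever $y = \tau_y(x)$, and $\approx$ is an equivalence relation, it is enough to check that $y = \tau_y(x)$ forces $\sigma_x(y) \approx x$. But by Lemma~\ref{lem:SqProperties} (first pair of maps), $\tau_y(x) = y$ is equivalent to $\sigma_x(y) = x \lop_r x = \Sq(x)$; hence $\sigma_x(y) = \Sq(x) = \Sq^1(x) \approx x$, as needed.

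For the reverse inclusion $\approx\ \subseteq\ \bumpeq$: since $\bumpeq$ is an equivalence relation, it suffices to show $\Sq(x) \bumpeq x$ for every $x \in X$ (then $\Sq^m(x) \bumpeq x$ follows for all $m \geq 0$ by transitivity, and for $m < 0$ by symmetry applied to $x' = \Sq^m(x)$). Fix $x$ and set $y = T(x)$, i.e.\ the unique $y$ with $\tau_y(x) = y$ guaranteed by Lemma~\ref{lem:SqProperties}; then the same lemma gives $\sigma_x(y) = x \lop_r x = \Sq(x)$. Since $y = \tau_y(x)$, the defining clause of $\bumpeq$ applies to the pair $(x,y)$ and yields $x \bumpeq \sigma_x(y) = \Sq(x)$. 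This establishes both inclusions, hence equality of $\bumpeq$ and $\approx$.

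The argument is almost entirely bookkeeping about equivalence relations; the only genuine input is the translation $\tau_y(x) = y \iff \sigma_x(y) = \Sq(x)$ from Lemma~\ref{lem:SqProperties}, together with the fact that $\Sq$ is a bijection, which is exactly what makes $\approx$ an equivalence relation and what lets us run the exponent $m$ over all of $\Z$ rather than just $\N$. So I do not expect a real obstacle here; the one point to be careful about is to invoke $T = \Sq^{-1}$ (as recorded in the Example following Lemma~\ref{lem:SqProperties}) correctly, namely that for \emph{every} $x$ there is a $y$ with $\tau_y(x) = y$, so that the defining relation of $\bumpeq$ can actually be applied to produce $x \bumpeq \Sq(x)$ — this surjectivity-type statement is what closes the loop.
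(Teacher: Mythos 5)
Your argument is correct and takes essentially the paper's route: the paper records this proposition as an immediate consequence of Lemmas \ref{lem:SqProperties}--\ref{lem:SqPropertiesRack}, i.e.\ the generating pairs of $\bumpeq$ are exactly the pairs $(x,\Sq(x))$ and $\Sq$ is a bijection, which is precisely what you spell out via the two inclusions. Two cosmetic slips worth fixing but not affecting the argument: the unique $y$ with $\tau_y(x)=y$ is $y=T^{-1}(x)=U(x)$ (so that $x=T(y)$), not $y=T(x)$, and the identity $T=\Sq^{-1}$ quoted from the Example holds only for SD solutions $(X,r_{\op})$ --- what you actually use is just the bijectivity of $T$ from Lemma~\ref{lem:SqProperties}, which suffices.
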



\begin{pro}\label{pro:commdiag_R_Q_BQ}
The following functors assemble into a commutative diagram:
\[\xymatrix@!0 @R=1cm @C=1.5cm{
\Rack \ar[rr]^-{\Q} && \Quandle\\
\SolCat \ar[rr]^-{\BQ} \ar[u]^{\R}  && \BQCat \ar[u]^{\R'} }\]
\end{pro}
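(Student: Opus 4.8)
The goal is to show that for every solution $(X,r)$ the quandles $\R'(\BQ(X,r))$ and $\Q(\R(X,r))$ coincide, and likewise that the two composite functors $\R'\circ\BQ$ and $\Q\circ\R$ agree on morphisms, so that the square commutes. The plan has two parts: first match the underlying sets, then match the quandle operations; naturality will then be immediate.

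For the underlying sets, recall that $\R'$ leaves the underlying set of a biquandle unchanged, so $\R'(\BQ(X,r))$ has underlying set $X/\!\bumpeq$. On the other side, $\R(X,r)=(X,\op_r)$, and $\Q(X,\op_r)$ is $X$ modulo the smallest equivalence relation identifying each $x$ with $x\op_r x=\Sq(x)$. Since the right translations of the rack $(X,\op_r)$ are bijective, Lemma~\ref{lem:SqPropertiesRack}(3) makes $\Sq$ a bijection of $X$, so this equivalence relation is exactly $\{(x,\Sq^m(x)):m\in\Z\}$. By Proposition~\ref{pro:IndBiquandleViaSq} this is precisely $\bumpeq$. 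Hence both composites have underlying set $X/\!\bumpeq$; write $\pi\colon X\twoheadrightarrow X/\!\bumpeq$ for the projection.

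For the operations, Proposition~\ref{pro:birack_biquandle} says $\pi$ is a morphism of solutions $(X,r)\to\BQ(X,r)$. Since $\R$ is a functor that alters neither underlying sets nor the underlying maps of morphisms, $\R(\pi)=\pi$ is a morphism of racks $(X,\op_r)\to\R'(\BQ(X,r))$. On the other hand, by the very construction of the induced quandle, the operation of $\Q(X,\op_r)$ is the unique one making $\pi\colon(X,\op_r)\to\Q(X,\op_r)$ a rack morphism. As $\pi$ is surjective, these two quandle operations on $X/\!\bumpeq$ must coincide, giving $\R'(\BQ(X,r))=\Q(\R(X,r))$. If a hands-on verification is preferred, one notes that $r'$ and $r'^{-1}$ are induced componentwise from $r$ and $r^{-1}$, so $\tau'_{\pi(y)}\circ\pi=\pi\circ\tau_y$ and $(\htau'_{\pi(y)})^{-1}\circ\pi=\pi\circ\htau_y^{-1}$ for each $y$, whence, using $x\op_r y=\tau_y\htau_y^{-1}(x)$,
\[\pi(x)\op_{r'}\pi(y)=\tau'_{\pi(y)}\bigl((\htau'_{\pi(y)})^{-1}(\pi(x))\bigr)=\pi\bigl(\tau_y\htau_y^{-1}(x)\bigr)=\pi(x\op_r y),\]
which is exactly the defining relation of the operation $\op'$ induced from $\op_r$ on $X/\!\bumpeq$.

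Finally, for a morphism $f\colon(X,r)\to(Y,s)$ of solutions, both $\R'(\BQ(f))$ and $\Q(\R(f))$ are, by the identifications above, the map $X/\!\bumpeq\to Y/\!\bumpeq$ induced by $f$, hence equal; so the diagram of functors commutes strictly. The only step that is not pure formalism is the identification of the two underlying sets, which rests on Proposition~\ref{pro:IndBiquandleViaSq} together with the bijectivity of $\Sq$; once that is in place, everything else follows from functoriality and the universal property of the quotient.
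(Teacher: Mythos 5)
Your proof is correct and follows exactly the route the paper intends: the paper states this proposition as an immediate consequence of Proposition~\ref{pro:IndBiquandleViaSq} (together with Lemmas~\ref{lem:SqLeftRight} and~\ref{lem:SqPropertiesRack}), and your argument fills in precisely those details, identifying the two quotient relations via the $\Sq$-orbit description and then matching the induced operations and morphisms by surjectivity of the projection and functoriality. No gaps; the componentwise verification $\pi(x)\op_{r'}\pi(y)=\pi(x\op_r y)$ is a nice explicit supplement to the formal argument.
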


In contrast to the induced (bi)quandle functor, the structure rack functor does not yield isomorphisms at the structure group level. Instead, it gives bijective group $1$-cocycles, as we will see in the next section.

We finish this section with a comparison of right and left structure racks. It is not used in what follows, but makes our structure rack story more coherent.

\begin{pro}\label{pro:LeftRightStrRack}
The right and the left structure racks of a solution $(X,r)$ are isomorphic. Concretely, the relation
\begin{align}
&T(y \lop_r x) = T(x) \op_r T(y) \label{eq:LR}
\end{align}
holds for all $x,y \in X$. Here $T$ is the bijection $y \mapsto \tau_y^{-1} (y)$ from Lemma~\ref{lem:SqProperties}.
\end{pro}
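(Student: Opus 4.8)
The plan is to derive the identity~\eqref{eq:LR} from a single colouring relation, using graphical calculus as in the proofs of Lemmas~\ref{lem:SqProperties} and~\ref{lem:SqLeftRight}. Since $T(z)=\tau_z^{-1}(z)$ is the unique element that the bijection $\tau_z$ sends to $z$, the element $T(y\lop_r x)$ is characterised by $\tau_{y\lop_r x}(T(y\lop_r x))=y\lop_r x$; hence it suffices to prove
\[\tau_{y\lop_r x}\bigl(T(x)\op_r T(y)\bigr)=y\lop_r x.\]
Three diagrammatic facts are in play. By Lemma~\ref{lem:SqProperties} (Fig.~\ref{fig:SqProperties}), $T(z)$ is the bottom-left colour of a crossing whose right strand carries $z$ both below and above, in which case the top-left colour is $\Sq(z)$. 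By Fig.~\ref{fig:str_racks}, in the two-crossing ``swap then swap back'' tangle, $p\op_r q$ is the top-right colour when the bottom-right and middle-right colours are $p$ and $q$, while $q\lop_r p$ is the top-left colour when the bottom-left and middle-left colours are $p$ and $q$. And by non-degeneracy of $r$, any two of the four colours around a crossing determine the other two.

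I would then assemble these facts into one colouring of a suitable braid diagram whose only free colours are $x$ and $y$. One presentation of the diagram should manifestly compute $T(x)\op_r T(y)$: take the ``swap--swap'' tangle with bottom-right colour $T(x)$ and middle-right colour $T(y)$ (so its top-right colour is $T(x)\op_r T(y)$), and attach, along the strands carrying $T(x)$ and $T(y)$, the crossings witnessing $r(T(x),x)=(\Sq(T(x)),x)$ and $r(T(y),y)=(\Sq(T(y)),y)$, so that $x$ and $y$ become honest colours and everything else is forced. Sliding the attached crossings through the central tangle by the Yang--Baxter / Reidemeister~$\mathrm{III}$ move — with non-degeneracy invoked at each step, exactly as in the proof of Proposition~\ref{pro:birack_biquandle} — should morph the picture into the other normal form: a ``swap--swap'' tangle with bottom-left colour $x$ and middle-left colour $y$ (computing $y\lop_r x$ at its top-left) carrying one further crossing whose right strand repeats the colour $y\lop_r x$, which forces its bottom-left colour to be $T(y\lop_r x)$. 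Since that bottom-left colour is $T(x)\op_r T(y)$ by construction, this yields~\eqref{eq:LR}.

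The real difficulty is bookkeeping, since \eqref{eq:LR} interchanges the right and the left structure racks: the colouring has to be set up so that a single tangle displays both $T(x)\op_r T(y)$ (read from the right) and, after the moves, $T(y\lop_r x)$ (read from the left), and the order of the Reidemeister moves must be chosen so that all the auxiliary colours telescope; I would expect Lemma~\ref{lem:SqLeftRight} (that $x\op_r x=x\lop_r x$) to be needed to close up the argument. A purely computational route avoids pictures: expand $T(x)\op_r T(y)=\tau_{T(y)}\,\sigma_{\tau_{T(x)}^{-1}(T(y))}(T(x))$ and push it through $\tau_{y\lop_r x}$ using the action relations \eqref{eqn:tau_action}--\eqref{eqn:sigma_action} together with $\tau_x(T(x))=x$ and $\sigma_{T(x)}(x)=\Sq(T(x))$ from Lemma~\ref{lem:SqProperties} (and the mirror facts for $y$); this is mechanical but rather long, and the left/right tension reappears as the repeated need to trade a $\sigma$ for a $\tau$ via \eqref{eqn:tau_action}--\eqref{eqn:sigma_action}. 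One might also lighten the notation by treating the biquandle case first, where $\Sq=\Id$ and $T$ is the map $t$ of~\eqref{eqn:biquandle}, and then reduce the general case via $x\bumpeq\Sq(x)$ (Proposition~\ref{pro:IndBiquandleViaSq}).
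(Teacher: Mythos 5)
Your opening reduction is fine: since $\tau_{z}$ is bijective, $T(z)=\tau_z^{-1}(z)$ is the unique element sent to $z$ by $\tau_z$, so \eqref{eq:LR} is indeed equivalent to $\tau_{y\lop_r x}\bigl(T(x)\op_r T(y)\bigr)=y\lop_r x$. But after that the proposal stops being a proof and becomes a plan: the central diagrammatic step is only asserted (``sliding the attached crossings \dots should morph the picture''), you yourself flag that ``the real difficulty is bookkeeping'' and that you \emph{expect} Lemma~\ref{lem:SqLeftRight} to be needed, and the alternative ``purely computational route'' is described as mechanical but is never carried out. The entire content of the statement lives exactly in that unexecuted step. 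For comparison, the paper's proof pins it down with two concrete intertwining identities, $T\sigma_y=\htau_y^{-1}T$ and $T\hsigma_y=\tau_y^{-1}T$ (verified directly from \eqref{eqn:tau_action}--\eqref{eqn:sigma_action} and the definitions, or read off Figs~\ref{fig:T_intertwines} and~\ref{fig:T_intertwines2}), combined with Lemma~\ref{lem:TauActsByRackMor} (each $\tau_y$ is an automorphism of $(X,\op_r)$) to rewrite $T(x)\op_r T(y)=\tau_y^{-1}\bigl(\tau_yT(x)\op_r y\bigr)=\htau_y^{-1}\tau_yT(x)$, while $T(y\lop_r x)=T\sigma_y\hsigma_y^{-1}(x)$; neither these identities nor any equivalent closed chain of equalities appears in your proposal, so the claim is not established.

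Two further cautions. First, the suggested reduction to the biquandle case via Proposition~\ref{pro:IndBiquandleViaSq} cannot give \eqref{eq:LR} as stated: passing to $X/\!\bumpeq$ only identifies elements up to powers of $\Sq$, so transporting the identity back to $X$ would yield $T(y\lop_r x)\bumpeq T(x)\op_r T(y)$, which is weaker than the on-the-nose equality claimed (and needed, e.g., for $T$ to be a rack isomorphism). Second, your expectation that Lemma~\ref{lem:SqLeftRight} must enter is a sign that the bookkeeping has not been thought through: the paper's argument does not use it, and the place where left/right tension is resolved is precisely the automorphism property of $\tau_y$ from Lemma~\ref{lem:TauActsByRackMor}, which your sketch never invokes. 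If you want to salvage the graphical route, the cleanest way is to draw the two colourings of Figs~\ref{fig:T_intertwines} and~\ref{fig:T_intertwines2} (or your own versions of them), extract the two intertwining relations above, and then perform the short algebraic computation reducing \eqref{eq:LR} to them.
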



\begin{proof}
Recall that the structure rack operations can be written as
\begin{align*}
&x \op_r y = \tau_y \sigma_{\tau_x^{-1}(y)}(x) = \tau_y \htau_y^{-1}(x), && y \lop_r x = \sigma_y \tau_{\sigma_x^{-1}(y)}(x) = \sigma_y \hsigma_y^{-1}(x). \label{eq:LRop}
\end{align*}
We will also use two properties of the map $\tau_y$:
\begin{itemize}
 \item it is an automorphism of the rack $(X,\op_r)$ (Lemma~\ref{lem:TauActsByRackMor});
 \item it induces a right action of $G_{(X,r)}$ on $X$ (relation~\eqref{eqn:tau_action}).
\end{itemize}
With this in mind, the two sides of~\eqref{eq:LR} rewrite as
\begin{align*}
T(y \lop_r x) &= T \sigma_y \hsigma_y^{-1}(x),\\
T(x) \op_r T(y )&=\tau_y^{-1}(\tau_y T(x) \op_r \tau_y T(y)) = \tau_y^{-1}(\tau_y T(x) \op_r y) = \tau_y^{-1} \tau_y \htau_y^{-1} \tau_y T(x)\\ & =\htau_y^{-1} \tau_y T(x).
\end{align*}
It remains to check the relation 
\[T \sigma_y \hsigma_y^{-1} = \htau_y^{-1} \tau_y T\]
in $\Sym(X)$. We split it into two parts:
\begin{align*}
T \sigma_y &= \htau_y^{-1}T, & T \hsigma_y &= \tau_y^{-1}T.
\end{align*}
They are verified as follows:
\begin{align*}
T \sigma_y(x) &= \tau_{\sigma_y(x)}^{-1}\sigma_y(x) = \tau_{\sigma_y(x)}^{-1}\tau_{\tau_x(y)}^{-1} \tau_{\tau_x(y)} \sigma_y(x)\\
&=\tau_y^{-1} \tau_x^{-1}(x \op_r \tau_x(y)) = \tau_y^{-1}(T(x) \op_r y) = \htau_y^{-1}T(x);\\
T \hsigma_y(x) &= \tau_{\hsigma_y(x)}^{-1}\hsigma_y(x) = \tau_{\hsigma_y(x)}^{-1}\tau_{\htau_x(y)}^{-1} \tau_{\htau_x(y)} \hsigma_y(x)\\
&=\tau_y^{-1} \tau_x^{-1} \tau_{\htau_x(y)} \hsigma_y(x) = \tau_y^{-1} \tau_x^{-1} (x) = \tau_y^{-1}T(x).
\end{align*}
For a more enlightening graphical proof, see Figs \ref{fig:T_intertwines} and \ref{fig:T_intertwines2}.
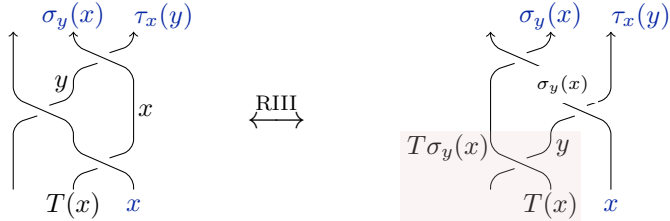
\begin{figure}[h]\centering
\begin{tikzpicture}[xscale=.8,yscale=.7]
\draw [rounded corners](1,1)--(1,1.25)--(1.4,1.4);
\draw [rounded corners,->](1.6,1.6)--(2,1.75)--(2,3.25)--(1,3.75)--(1,4);
\draw [rounded corners](0,1)--(0,2.25)--(0.4,2.4);
\draw [rounded corners](0.6,2.6)--(1,2.75)--(1,3.25)--(1.4,3.4);
\draw [rounded corners,->](1.6,3.6)--(2,3.75)--(2,4);
\draw [rounded corners,->](2,1)--(2,1.25)--(1,1.75)--(1,2.25)--(0,2.75)--(0,4);
\node  at (5,2.5){\Large $\overset{\mathrm{RIII}}{\longleftrightarrow}$\hspace*{1cm}};
\node  at (1,.7) {$T(x)$};
\node  at (2,.7) {$\color{myblue} x$};
\node  at (1,4.3) {$\color{myblue} \sigma_y(x)$};
\node  at (2.5,4.3) {$\color{myblue} \tau_x(y)$};
\node  at (2.2,2.5) {$x$};
\node  at (.8,3) {$y$};
\end{tikzpicture}
\begin{tikzpicture}[xscale=.8,yscale=.7]
\draw [rounded corners](0,0)--(0,0.25)--(0.4,0.4);
\draw [rounded corners](0.6,0.6)--(1,0.75)--(1,1.25)--(1.4,1.4);
\draw [rounded corners,->](1.6,1.6)--(2,1.75)--(2,3);
\draw [rounded corners](1,0)--(1,0.25)--(0,0.75)--(0,2.25)--(0.4,2.4);
\draw [rounded corners,->](0.6,2.6)--(1,2.75)--(1,3);
\draw [rounded corners,->](2,0)--(2,1.25)--(1,1.75)--(1,2.25)--(0,2.75)--(0,3);
\node  at (1,-.3) {$T(x)$};
\node  at (2,-.3) {$\color{myblue} x$};
\node  at (1,3.3) {$\color{myblue} \sigma_y(x)$};
\node  at (2.5,3.3) {$\color{myblue} \tau_x(y)$};
\node  [fill=white] at (1.2,2) {$\scriptstyle \sigma_y(x)$};
\node  at (-.7,.8) {$T \sigma_y(x)$};
\node  at (1.2,.8) {$y$};
\fill [myred!20, opacity=.2] (-1.5,-.6) rectangle (1.5,1.1);
\end{tikzpicture}
\caption{At the highlighted crossing, one gets $\htau_yT \sigma_y = T$. Here the two rightmost top colors and the rightmost bottom color uniquely determine the remaining ones for both diagrams.}\label{fig:T_intertwines}
\end{figure}
\begin{figure}[h]\centering
\begin{tikzpicture}[xscale=.8,yscale=.7]
\draw [rounded corners](1,1)--(1,1.25)--(1.4,1.4);
\draw [rounded corners](1.6,1.6)--(2,1.75)--(2,3.25)--(1.6,3.4);
\draw [rounded corners,->](1.4,3.6)--(1,3.75)--(1,4);
\draw [rounded corners,->](0,1)--(0,2.25)--(1,2.75)--(1,3.25)--(2,3.75)--(2,4);
\draw [rounded corners](2,1)--(2,1.25)--(1,1.75)--(1,2.25)--(.6,2.4);
\draw [rounded corners,->](.4,2.6)--(0,2.75)--(0,4);
\node  at (5,2.5){\Large $\overset{\mathrm{RIII}}{\longleftrightarrow}$\hspace*{1cm}};
\node  at (1,.7) {$T(x)$};
\node  at (2,.7) {$\color{myblue} x$};
\node  at (1,4.3) {$\color{myblue} \hsigma_y(x)$};
\node  at (2.5,4.3) {$\color{myblue} \htau_x(y)$};
\node  at (2.2,2.5) {$x$};
\node  at (.8,3) {$y$};
\end{tikzpicture}
\begin{tikzpicture}[xscale=.8,yscale=.7]
\draw [rounded corners,->](0,0)--(0,0.25)--(1,0.75)--(1,1.25)--(2,1.75)--(2,3);
\draw [rounded corners](1,0)--(1,0.25)--(.6,.4);
\draw [rounded corners](.4,.6)--(0,0.75)--(0,2.25)--(0.4,2.4);
\draw [rounded corners,->](0.6,2.6)--(1,2.75)--(1,3);
\draw [rounded corners](2,0)--(2,1.25)--(1.6,1.4);
\draw [rounded corners,->](1.4,1.6)--(1,1.75)--(1,2.25)--(0,2.75)--(0,3);
\node  at (1,-.3) {$T(x)$};
\node  at (2,-.3) {$\color{myblue} x$};
\node  at (1,3.3) {$\color{myblue} \hsigma_y(x)$};
\node  at (2.5,3.3) {$\color{myblue} \htau_x(y)$};
\node  [fill=white] at (1.2,2) {$\scriptstyle \hsigma_y(x)$};
\node  at (-.7,.8) {$T \hsigma_y(x)$};
\node  at (1.2,.8) {$y$};
\fill [myred!20, opacity=.2] (-1.5,-.6) rectangle (1.5,1.1);
\end{tikzpicture}
\caption{At the highlighted crossing, one gets $\tau_yT \hsigma_y = T$.}\label{fig:T_intertwines2}
\end{figure}
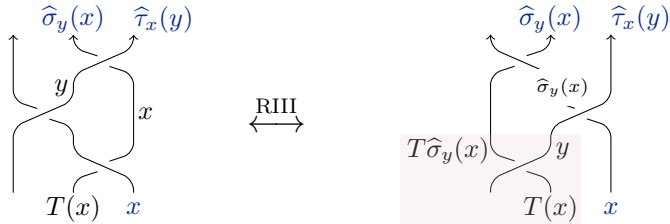
\end{proof}

\begin{rem}
All results established in this and the previous sections remain valid for infinite solutions. Indeed, the proofs in this section do not rely on the finiteness assumption. In the proof of Proposition~\ref{pro:birack_biquandle}, instead of deducing bijectivity from injectivity, one needs to check that $x \bumpeq x'$ implies $\hsigma_z(x) \bumpeq \hsigma_z(x')$ and $\sigma^{-1}_z(x) \bumpeq \sigma^{-1}_z(x')$, and similarly for $\tau$. This is done using arguments similar to those already present in the proof. We thank the reviewer for pointing out that we did not really need the finiteness, and for encouraging us to rework some of the proofs.
\end{rem}

\section{Bijective $1$-cocycles}\label{s:1cocycles}

In this section we prove a technical lemma on passing to quotients in bijective $1$-cocycles. We illustrate it with the example of the structure group $G_{(X,r)}$ of a solution $(X,r)$, and the structure group $G_{(X,\op_r)}$ of its structure rack $(X,\op_r)$. This lemma is instrumental for constructing a nice finite quotient of $G_{(X,r)}$ in Section~\ref{s:QuotientsSol}.

Let a group $G$ act on a group $H$ on the right by group automorphisms. That is, the following conditions hold for all $g_i \in G, h_i \in H$:
\begin{align*}
&h^{g_1 g_2} = (h^{g_1})^{g_2}, && h^1 = h,\\
&(h_1h_2)^{g} = h_1^{g}h_2^{g}, && 1^{g}=1.
\end{align*}

A \emph{group $1$-cocycle} is a map $\varphi \colon G \to H$ satisfying
\[\varphi(g_1 g_2) = \varphi(g_1)^{g_2}\, \varphi(g_2)\]
for all $g_i \in G$. It can be seen as a twisted group morphism. 

The definitions of monoid/semigroup actions and $1$-cocycles repeat verbatim the above definitions. Note that we consider only actions by monoid/semigroup automorphisms, i.e., the maps $h \mapsto h^g$ are all bijective.

We are particularly interested in bijective $1$-cocycles and their quotients. 

\begin{lem}\label{lem:cocycles}
Let $\varphi \colon G \to H$ be a bijective (semi-)group/monoid $1$-cocycle. Let 
\[\cR_G=\{\, g_i=g'_i \, | \, i \in I \,\}, \qquad\qquad \cR_H=\{\, \varphi(g_i)=\varphi(g'_i)  \, | \,  i \in I\,\}\] 
be a set of relations we would like to impose on~$G$, and its image in~$H$ respectively. Thus, $g_i, g'_i \in G$ for all $i$. Assume that
\begin{enumerate}
\item $h^{g_i}=h^{g'_i}$ for all $h\in H$ and $i \in I$; 
\item $\cR_H=\{\, \varphi(g_i)^g=\varphi(g'_i)^g  \, | \,  i \in I\,\}$ for all $g\in G$.
\end{enumerate}
Then the $G$-action on $H$ induces a $\raisebox{.5mm}{$G$}\big/\raisebox{-.5mm}{$\cR_G$}$-action on $\raisebox{.5mm}{$H$}\big/\raisebox{-.5mm}{$\cR_H$}$, and $\varphi$ induces a bijective (semi-)group/monoid $1$-cocycle $\overline{\varphi} \colon \raisebox{.5mm}{$G$}\big/\raisebox{-.5mm}{$\cR_G$} \to \raisebox{.5mm}{$H$}\big/\raisebox{-.5mm}{$\cR_H$}$ with respect to this induced action.
\end{lem}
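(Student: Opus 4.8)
The plan is to verify that both quotient constructions are well-defined as quotients of the appropriate algebraic structure, that the $G$-action descends, and that $\varphi$ descends to a bijection which is still a $1$-cocycle. First I would fix notation: write $\oG = G/\cR_G$ and $\oH = H/\cR_H$ for the quotients (in the semigroup/monoid/group category as appropriate), with quotient maps $\pi_G \colon G \to \oG$ and $\pi_H \colon H \to \oH$; here $\cR_H$ is the congruence generated by the listed relations. The goal is a commutative square with $\overline{\varphi} \colon \oG \to \oH$ along the bottom.

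The first real step is to show the $G$-action on $H$ descends to a $\oG$-action on $\oH$. For this there are two things to check. (i) Each $g \in G$ acts on $\oH$: the automorphism $h \mapsto h^g$ of $H$ must send the congruence $\cR_H$ to itself, so that it induces an automorphism of $\oH$. Since the congruence $\cR_H$ is generated by the pairs $(\varphi(g_i),\varphi(g'_i))$, it suffices that $(\varphi(g_i)^g, \varphi(g'_i)^g)$ lies in $\cR_H$ for every $i$ and every $g$; this is exactly hypothesis~(2). (Because the action is by automorphisms, the image of a generating set of a congruence generates a congruence contained in the one generated by the images, and here the images are again among the defining relations.) (ii) The resulting map $G \to \operatorname{Aut}(\oH)$ factors through $\oG$: we must show $g_i$ and $g'_i$ induce the same automorphism of $\oH$ for each $i$. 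By hypothesis~(1), $h^{g_i} = h^{g'_i}$ already in $H$ for all $h$, so certainly the induced maps on $\oH$ coincide; hence the action factors through $\oG = G/\cR_G$. This gives a well-defined right action of $\oG$ on $\oH$ by automorphisms.

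Next I would produce $\overline{\varphi}$. As a map of sets/semigroups, $\varphi$ sends the generator $g_i$ of $\cR_G$-equivalences to $\varphi(g_i)$, and more generally it should send $\cR_G$-equivalent elements to $\cR_H$-equivalent ones; then the universal property of the quotient gives $\overline{\varphi} \colon \oG \to \oH$ with $\pi_H \circ \varphi = \overline{\varphi} \circ \pi_G$. Concretely: $\varphi$ being a bijective $1$-cocycle, one has the cocycle identity $\varphi(ab) = \varphi(a)^b \varphi(b)$, so for fixed $a', b'$ the map $g \mapsto \varphi(a' g b')$ differs from $g \mapsto \varphi(g)$ only by the automorphism $(-)^{b'}$ of $H$ applied after left-translation by the element $\varphi(a')^{gb'}$ — the point being that if $g \equiv g'$ under $\cR_G$ then by hypothesis~(1) the actions agree and by applying the cocycle identity repeatedly one rewrites $\varphi$ of any word to a word in the $\varphi(g_i), \varphi(g'_i)$ and their translates/images, so $\varphi(g) \equiv \varphi(g')$ in $\oH$. (In the monoid/group case one should also check this is compatible with inverses, which follows because a $1$-cocycle automatically satisfies $\varphi(g^{-1}) = (\varphi(g)^{-1})^{g^{-1}}$, and the congruence is preserved by the action.) That $\overline{\varphi}$ is a $1$-cocycle for the induced action is then immediate by applying $\pi_H$ to $\varphi(g_1 g_2) = \varphi(g_1)^{g_2}\varphi(g_2)$ and using that $\pi_H$ intertwines the $G$- and $\oG$-actions.

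Finally, surjectivity of $\overline{\varphi}$ is clear since $\varphi$ is surjective and $\pi_H$ is surjective. For injectivity: suppose $\overline{\varphi}(\pi_G(a)) = \overline{\varphi}(\pi_G(b))$, i.e.\ $\varphi(a) \equiv \varphi(b)$ in $\oH$. Since $\cR_H$ as a congruence is precisely the image under $\varphi$ of $\cR_G$ as a congruence — here one uses hypothesis~(2) to see that a chain of elementary $\cR_H$-reductions $\varphi(a) \to \cdots \to \varphi(b)$ is, step by step, the $\varphi$-image of a chain of elementary $\cR_G$-reductions, because every elementary $\cR_H$-step replaces a factor $u\varphi(g_i)^g v$ by $u\varphi(g'_i)^g v$, and bijectivity of $\varphi$ together with the cocycle identity let us write $u, v, \varphi(g_i)^g$ as $\varphi$ of group elements, the factor $\varphi(g_i)^g$ being $\varphi$ of a conjugate/translate of $g_i$ — we conclude $a \equiv b$ in $\oG$, i.e.\ $\pi_G(a) = \pi_G(b)$. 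The main obstacle I anticipate is this injectivity argument: making precise, in whichever of the semigroup/monoid/group settings, the claim that ``the congruence generated by $\{\varphi(g_i)=\varphi(g_i')\}$ is transported bijectively by $\varphi$ to the congruence generated by $\{g_i=g_i'\}$.'' The cocycle identity is what converts a word in the $\varphi(g_i)$'s and their $H$-translates back into $\varphi$ of a word in $G$, and hypothesis~(2) is exactly what guarantees the relators one needs are available at every intermediate stage; getting the bookkeeping of these rewrites right — especially handling the automorphism twists $(-)^g$ uniformly across the three algebraic settings — is the delicate part, whereas everything else is a routine diagram chase.
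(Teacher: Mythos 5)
Your proposal is correct and follows essentially the same route as the paper: conditions (1) and (2) are used in the same roles to descend the action and the cocycle, and bijectivity is obtained by matching the elementary identification pairs $x g_i y \sim x g_i' y$ in $G$ with the pairs $z\varphi(g_j)v \sim z\varphi(g_j')v$ in $H$ via the cocycle identity, the bijectivity of $\varphi$ and of the action maps, and hypothesis (2). The paper's proof is just a terser version of your elementary-step bookkeeping.
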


\begin{proof}
Condition~1 is precisely what is needed to get a $\raisebox{.5mm}{$G$}\big/\raisebox{-.5mm}{$\cR_G$}$-action on $H$, and condition~2 allows one to descend to $\raisebox{.5mm}{$H$}\big/\raisebox{-.5mm}{$\cR_H$}$. The cocycle condition for~$\overline{\varphi}$ follows from that for~${\varphi}$. It remains to check the bijectivity. Passing to the quotient $\raisebox{.5mm}{$G$}\big/\raisebox{-.5mm}{$\cR_G$}$ means identifying $x g_i y$ with $x g'_i y$ for all $x,y \in G$, $i \in I$. The $1$-cocycle $\varphi$ sends these pairs to $\varphi(x)^{g_i y}\, \varphi(g_i)^y\, \varphi(y)$ and $\varphi(x)^{g_i y}\, \varphi(g'_i)^y\, \varphi(y)$. Since $\varphi$ is bijective and $G$ acts on $H$ by bijections, and because of condition~2, one gets precisely all pairs $z \varphi(g_j) v$ and $z \varphi(g'_j) v$ for all $z,v \in H$, $j \in I$. So, the induced cocycle~$\overline{\varphi}$ is bijective.
\end{proof}

\begin{rem}\label{rem:right_action_cocycles}
Lemma~\ref{lem:cocycles} has a symmetric version for left actions and the corresponding notion of $1$-cocycles. Left actions are denoted by ${}^g\!h$ and $g \lact h$ here.
\end{rem}

As an example, take a YBE solution $(X,r)$, and put $G = H = SG_X$, the free semigroup on $X$. This semigroup acts on itself by $x^y = \tau_y(x)$, extended to a semigroup action by semigroup morphisms in the obvious way. This action is by bijections, since all $\tau_y$ are bijective. By the freeness, there exists a unique semigroup $1$-cocycle $\varphi \colon SG_X \to SG_X$ with $\varphi(x)=x$ for all $x \in X$. It is clearly bijective. Put
\[\cR_G=\{\, xy = \sigma_x(y)\tau_y(x) \, | \, x,y \in X \,\}. \]
Then $\varphi(xy)=x^y \, y = \tau_y(x) y$, and $\varphi(\sigma_x(y)\tau_y(x))=\sigma_x(y)^{\tau_y(x)} \, \tau_y(x) = (y \op_r \tau_y(x)) \, \tau_y(x)$. Since $\tau_y$ is bijective,
\[\cR_H=\{\, xy = (y \op_r x) \, x \, | \, x,y \in X \,\}. \]
For these data, condition~1 from Lemma~\ref{lem:cocycles} follows from~\eqref{eqn:tau_action}, and condition~2 from the  following property, established in \cite[Theorem 2.3.(i)]{Sol}. Observe that our proof does not exploit the finiteness of~$X$.
\begin{lem}\label{lem:TauActsByRackMor}
Let $(X,r)$ be a solution. For all $z \in X$, both $\tau_z$, and $\htau_z$ are rack automorphisms of $(X,\op_r)$: for all $x,y \in X$, one has 
\begin{align*}
&\tau_z(x \op_r y)=\tau_z(x) \op_r \tau_z(y), && \htau_z(x \op_r y)=\htau_z(x) \op_r \htau_z(y).
\end{align*}
\end{lem}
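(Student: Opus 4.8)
The plan is to reduce the first assertion to one ``mixed'' consequence of the YBE relating the maps $\tau$ and $\htau$, and then to obtain the statement about $\htau_z$ by running the same argument for the solution $(X,r^{-1})$. Since $\tau_z$ and $\htau_z$ are bijections of $X$, and a bijective rack endomorphism is automatically an automorphism, it suffices to prove that $\tau_z$ and $\htau_z$ are rack endomorphisms. Write $R_y\colon X\to X$ for the right translation $x\mapsto x\op_r y$; by the formula for $\op_r$ one has $R_y=\tau_y\htau_y^{-1}$ in $\Sym(X)$. The assertion that $\tau_z$ is an endomorphism of $(X,\op_r)$ is the identity $\tau_z R_y=R_{\tau_z(y)}\tau_z$, i.e.
\[
\tau_z\tau_y\htau_y^{-1}=\tau_{\tau_z(y)}\htau_{\tau_z(y)}^{-1}\tau_z\qquad\text{in }\Sym(X).
\]
First I would rewrite $\tau_z\tau_y=\tau_{\tau_z(y)}\tau_{\sigma_y(z)}$ using \eqref{eqn:tau_action}, cancel the invertible factor $\tau_{\tau_z(y)}$ on the left, and rearrange the two $\htau$-factors; what remains is the identity
\[
\htau_{\tau_z(y)}\,\tau_{\sigma_y(z)}=\tau_z\,\htau_y ,
\]
which I will call $(\star)$.

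To prove $(\star)$ I would move to $\Sym(X^{\times 3})$ and first establish the braid-type relation
\[
r_2^{-1}\,r_1\,r_2=r_1\,r_2\,r_1^{-1}.
\]
This is equivalent to the YBE: left-multiplying by $r_2$ turns it into $r_1r_2=r_2r_1r_2\,r_1^{-1}$, and $r_2r_1r_2=r_1r_2r_1$ then makes it $r_1r_2=r_1r_2$. Now evaluate both sides on a triple $(w,y,z)$ and compare third coordinates. On the right, $r_1^{-1}$ puts $\htau_y(w)$ in the middle slot, $r_2$ sends $(\,\cdot\,,\htau_y(w),z)$ to $(\,\cdot\,,\,\cdot\,,\tau_z\htau_y(w))$, and the outermost $r_1$ leaves the third slot alone, so the third coordinate is $\tau_z\htau_y(w)$. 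On the left, applying $r_2$, then $r_1$, then $r_2^{-1}$ and tracking the third slot gives $\htau_{\tau_z(y)}\tau_{\sigma_y(z)}(w)$. This is precisely $(\star)$, and hence the first assertion. (The relation $r_2^{-1}r_1r_2=r_1r_2r_1^{-1}$ also has a one-move graphical proof --- a $z$-coloured strand sliding across a negative crossing nested in a $y$-coloured bigon --- which is probably the slickest way to present it.)

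For $\htau_z$ I would repeat the argument verbatim for the solution $(X,r^{-1})$, whose $\sigma$- and $\tau$-components are $\hsigma$ and $\htau$. The analogue of $(\star)$ then reads
\[
\tau_{\htau_z(y)}\,\htau_{\hsigma_y(z)}=\htau_z\,\tau_y ,
\]
call it $(\star')$. Feeding $(\star')$ into $\htau_z\tau_y\htau_y^{-1}=\tau_{\htau_z(y)}\htau_{\htau_z(y)}^{-1}\htau_z$, cancelling $\tau_{\htau_z(y)}$, and rearranging reduces the claim to $\htau_{\htau_z(y)}\htau_{\hsigma_y(z)}=\htau_z\htau_y$, which is \eqref{eqn:tau_action} for the solution $(X,r^{-1})$.

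The one genuinely substantive step is $(\star)$ --- equivalently $r_2^{-1}r_1r_2=r_1r_2r_1^{-1}$ --- so the main care goes into correctly tracking the three coordinates there; everything else is formal bookkeeping with \eqref{eqn:tau_action} and the $r\leftrightarrow r^{-1}$ symmetry. One point to watch is that the structure rack of $(X,r^{-1})$ need not coincide with $(X,\op_r)$, which is why the $\htau$-statement genuinely has to be re-derived and cannot simply be quoted from the $\tau$-case.
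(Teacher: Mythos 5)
Your argument is correct and is essentially the paper's proof in algebraic dress: the identity $(\star)$, extracted from $r_2^{-1}r_1r_2=r_1r_2r_1^{-1}$ evaluated on a triple, is exactly what the paper reads off its three-strand diagrams in Fig.~\ref{fig:TauActsByRackMor}, and your passage to $(X,r^{-1})$ for $\htau_z$ is the algebraic counterpart of the paper's ``turn the diagrams upside down''. One small remark: your closing caution is not really needed, since $x\op_{r^{-1}}y=\htau_y\tau_y^{-1}(x)=\rho_y^{-1}(x)$, so an automorphism $f$ of $(X,\op_{r^{-1}})$ satisfies $f\rho_y^{-1}=\rho_{f(y)}^{-1}f$, hence $f\rho_y=\rho_{f(y)}f$, and the $\tau$-case applied to $(X,r^{-1})$ could in fact be quoted directly --- but your re-derivation is equally valid.
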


\begin{proof}
For $\tau_z$, the argument is similar to that from Lemma~\ref{lem:SqLeftRight}. In Fig.~\ref{fig:TauActsByRackMor}, we give its graphical version. The proof for $\htau_z$ in analogous. Graphically, it suffices to turn the diagrams from Fig.~\ref{fig:TauActsByRackMor} upside down.
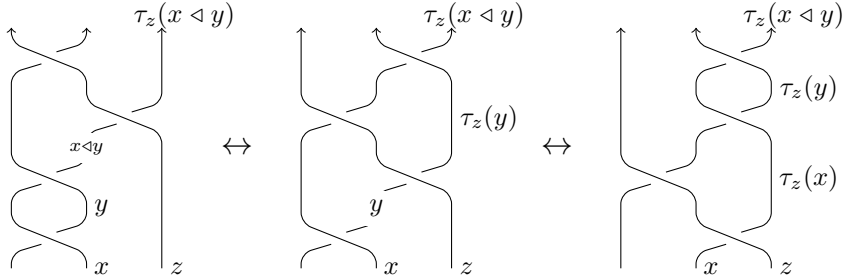
\begin{figure}[h]\centering
\begin{tikzpicture}[xscale=1,yscale=.8]
\draw [rounded corners](1,-1)--(1,-.75)--(0,-.25)--(0,0.25)--(0.4,0.4);
\draw [rounded corners](0.6,0.6)--(1,0.75)--(1,1.25)--(1.4,1.4);
\draw [rounded corners,->](1.6,1.6)--(2,1.75)--(2,3);
\draw [rounded corners](0,-1)--(0,-.75)--(0.4,-.6);
\draw [rounded corners](0.6,-.4)--(1,-.25)--(1,0.25)--(0,0.75)--(0,2.25)--(0.4,2.4);
\draw [rounded corners,->](0.6,2.6)--(1,2.75)--(1,3);
\draw [rounded corners,->](2,-1)--(2,1.25)--(1,1.75)--(1,2.25)--(0,2.75)--(0,3);
\node  at (1.2,-1) {$x$};
\node  at (2.2,-1) {$z$};
\node  at (1.2,0) {$y$};
\node  [fill=white] at (1,1) {$\scriptstyle x \op y$};
\node  at (2.3,3.2) {$\tau_z(x \op y)$};
\node  at (3.2,1){\Large $\leftrightarrow$\hspace*{.4cm}};
\end{tikzpicture}
\begin{tikzpicture}[xscale=1,yscale=.8]
\draw [rounded corners](0,0)--(0,0.25)--(0.4,0.4);
\draw [rounded corners](0.6,0.6)--(1,0.75)--(1,1.25)--(1.4,1.4);
\draw [rounded corners,->](1.6,1.6)--(2,1.75)--(2,3.25)--(1,3.75)--(1,4);
\draw [rounded corners](1,0)--(1,0.25)--(0,0.75)--(0,2.25)--(0.4,2.4);
\draw [rounded corners](0.6,2.6)--(1,2.75)--(1,3.25)--(1.4,3.4);
\draw [rounded corners,->](1.6,3.6)--(2,3.75)--(2,4);
\draw [rounded corners,->](2,0)--(2,1.25)--(1,1.75)--(1,2.25)--(0,2.75)--(0,4);
\node  at (3.6,2){\Large $\leftrightarrow$\hspace*{.4cm}};
\node  at (1.2,0) {$x$};
\node  at (2.2,0) {$z$};
\node  [fill=white] at (1,1) {$y$};
\node  at (2.5,2.5) {$\tau_z(y)$};
\node  at (2.3,4.2) {$\tau_z(x \op y)$};
\end{tikzpicture}
\begin{tikzpicture}[xscale=1,yscale=.8]
\draw [rounded corners](1,1)--(1,1.25)--(1.4,1.4);
\draw [rounded corners](1.6,1.6)--(2,1.75)--(2,3.25)--(1,3.75)--(1,4.25)--(1.4,4.4);
\draw [rounded corners,->](1.6,4.6)--(2,4.75)--(2,5);
\draw [rounded corners](0,1)--(0,2.25)--(0.4,2.4);
\draw [rounded corners](0.6,2.6)--(1,2.75)--(1,3.25)--(1.4,3.4);
\draw [rounded corners,->](1.6,3.6)--(2,3.75)--(2,4.25)--(1,4.75)--(1,5);
\draw [rounded corners,->](2,1)--(2,1.25)--(1,1.75)--(1,2.25)--(0,2.75)--(0,5);
\node  at (1.2,1) {$x$};
\node  at (2.2,1) {$z$};
\node  at (2.5,2.5) {$\tau_z(x)$};
\node  at (2.5,4) {$\tau_z(y)$};
\node  at (2.3,5.2) {$\tau_z(x \op y)$};
\end{tikzpicture}
\caption{The three colors $x,y,z$ in the left diagram uniquely determine all colors in all the diagrams. From the rightmost diagram, one gets $\tau_z(x \op_r y)=\tau_z(x) \op_r \tau_z(y)$.}\label{fig:TauActsByRackMor}
\end{figure}
\end{proof}

Our arguments work for monoids instead of semigroups; in this case one necessarily has $\varphi(1)=1$. Lemma~\ref{lem:cocycles} then yields bijective semigroup/monoid $1$-cocycles 
\[\varphi_{SG} \colon SG_{(X,r)} \to SG'_{(X,\op_r)}, \qquad\qquad \varphi_{Mon} \colon Mon_{(X,r)} \to Mon'_{(X,\op_r)},\] 
where \emph{structure semigroups/monoids} of solutions or racks are defined by the same generators and relations as structure groups. 

Next, let $(X,r)$ be a biquandle, and consider the \emph{double} $D(X)$ of the set~$X$, consisting of the elements $x=x^{+1}$ and $x^{-1}$, $x \in X$. This time, choose $G = H = Mon_{D(X)}$, the free monoid on $D(X)$. This monoid acts on itself by
\begin{align}\label{eqn:ActionStrGroups}
x^{\varepsilon_x} \ract y^{\varepsilon_y} = \tau_y^{\varepsilon_y} (x)^{\varepsilon_x}, \qquad\qquad x,y \in X,\: \varepsilon_x,\varepsilon_y \in \{\pm 1\},
\end{align}
extended to a monoid action by monoid morphisms in the obvious way. Together with the personalized notation $\ract$, we use the subscript notation $x^y$ for this or any other action, when it does not create ambiguity. This action is again by bijections. By the freeness, there exists a unique monoid $1$-cocycle $\varphi \colon Mon_{D(X)} \to Mon_{D(X)}$ with $\varphi(1)=1$, $\varphi(x)=x$, and $\varphi(x^{-1})=t(x)^{-1}$ for all $x \in X$; see~\eqref{eqn:biquandle} for the map~$t$. This cocycle is bijective, since so is the map~$t$. Put
\[\cR_G=\{\, xy = \sigma_x(y)\tau_y(x) \, | \, x,y \in X \,\} \,\sqcup\, \{\, xx^{-1} = x^{-1}x=1 \, | \, x \in X \,\}. \]
Then $\varphi(xx^{-1})=x^{x^{-1}} t(x)^{-1} = \tau_x^{-1}(x)t(x)^{-1}=t(x) t(x)^{-1}$, since $\tau_x(t(x))=x$ due to~\eqref{eqn:biquandle}. Similarly, $\varphi(x^{-1}x)=(t(x)^{-1})^x x =\tau_x(t(x))^{-1} x = x^{-1} x$. Since $t$ is bijective, one concludes
\[\cR_H=\{\, xy = (y \op_r x) \, x \, | \, x,y \in X \,\}  \,\sqcup\, \{\, xx^{-1} = x^{-1}x=1 \, | \, x \in X \,\}. \]
For relations of the first type (braid type), conditions from Lemma~\ref{lem:cocycles} hold for the same reasons as before. For relations of the second type (group type), they follow from definition~\eqref{eqn:ActionStrGroups}. Lemma~\ref{lem:cocycles} then yields a $1$-cocycles for quotients:

\begin{thm}\label{thm:cocycle}
Let $(X,r)$ be a solution. There exists a unique bijective group $1$-cocycle $J \colon G_{(X,r)} \to G'_{(X,\op_r)}$ such that the following diagram commutes:
\[\xymatrix@!0 @R=1cm @C=1.5cm{&X \ar[dl]_{\iota} \ar[dr]^{\iota} &\\
G_{(X,r)} \ar[rr]^{J} && G'_{(X,\op_r)}.}\]
Here 
\begin{itemize}
\item $G_{(X,r)}$ and $G'_{(X,\op_r)}$ are the structure groups of $(X,r)$ and of its structure rack $(X,\op'_r)$ respectively;
\item $G_{(X,r)}$ acts on $G'_{(X,\op_r)}$ on the right by group automorphisms extending~\eqref{eqn:ActionStrGroups};
\item the two maps $\iota$ are defined by $\iota(x)=x$ for all $x \in X$.
\end{itemize}
\end{thm}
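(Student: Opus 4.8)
The plan is to obtain $J$ from Lemma~\ref{lem:cocycles}, treating the biquandle case directly and reducing the general case to it.

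For a biquandle $(X,r)$ I would invoke exactly the second example set up before the statement. Take $G=H=Mon_{D(X)}$, the free monoid on the double $D(X)=\{x^{+1},x^{-1}\mid x\in X\}$, with the self-action~\eqref{eqn:ActionStrGroups} and the bijective monoid $1$-cocycle $\varphi$ determined by $\varphi(x)=x$, $\varphi(x^{-1})=t(x)^{-1}$, where $t$ is the map from~\eqref{eqn:biquandle}. With $\cR_G$ the braid-type relations $xy=\sigma_x(y)\tau_y(x)$ together with the group-type relations $xx^{-1}=x^{-1}x=1$, the computation recalled above identifies $\cR_H$ as the relations $xy=(y\op_r x)x$ together with $xx^{-1}=x^{-1}x=1$. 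I would then check conditions~(1) and~(2) of Lemma~\ref{lem:cocycles}: for the braid-type relations these follow from~\eqref{eqn:tau_action} and from Lemma~\ref{lem:TauActsByRackMor} (each $\tau_z$ is a rack automorphism of $(X,\op_r)$), and for the group-type relations directly from~\eqref{eqn:ActionStrGroups}. Lemma~\ref{lem:cocycles} then yields a bijective monoid $1$-cocycle $\overline{\varphi}\colon Mon_{D(X)}/\cR_G\to Mon_{D(X)}/\cR_H$, together with the action of the source on the target. Finally I would identify the quotients with structure groups: adjoining the relations $ss^{-1}=s^{-1}s=1$ converts a monoid presentation into the corresponding group presentation, so $Mon_{D(X)}/\cR_G=G_{(X,r)}$, while $\cR_H$ is precisely the defining presentation of $G'_{(X,\op_r)}=G_{(X,r'_{\op_r})}$, since $r'_{\op}(x,y)=(y\op x,x)$. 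As both quotients are groups the induced action is by group automorphisms, $\overline{\varphi}(x)=x$ makes the triangle commute, and $J=\overline{\varphi}$ works.

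For an arbitrary solution $(X,r)$ I would pass to the induced biquandle $\BQ(X,r)$ and transport along three isomorphisms: $G_{(X,r)}\cong G_{\BQ(X,r)}$ from Proposition~\ref{pro:birack_biquandle}; the identification of the structure rack of $\BQ(X,r)$ with the induced quandle $\Q(X,\op_r)$ of the structure rack $(X,\op_r)$, from Proposition~\ref{pro:commdiag_R_Q_BQ}; and $G'_{(X,\op_r)}\cong G'_{\Q(X,\op_r)}$, from the structure-group identity recorded in Proposition~\ref{pro:commdiag_Sol_Q_BQ}. Since all three commute with the maps $\iota$, transporting the biquandle-case cocycle along them produces the desired bijective group $1$-cocycle $J\colon G_{(X,r)}\to G'_{(X,\op_r)}$ with $J\circ\iota=\iota$, and carries across the $G_{(X,r)}$-action on $G'_{(X,\op_r)}$ extending~\eqref{eqn:ActionStrGroups}.

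Uniqueness I would get formally: a group $1$-cocycle $J$ with $J(x)=x$ on $X$ is pinned down on all of $G_{(X,r)}$, because the cocycle identity $J(gh)=J(g)^hJ(h)$ computes it on products, and applied to $1=J(1)=J(gg^{-1})$ it gives $J(g^{-1})=(J(g)^{-1})^{g^{-1}}$, so $J$ is forced on every word in $X\cup X^{-1}$. The only point I expect to need genuine care is the bookkeeping in the reduction step — making sure the three functorial identifications are mutually compatible and compatible with the $\iota$'s — but this is routine once Propositions~\ref{pro:birack_biquandle}, \ref{pro:commdiag_Sol_Q_BQ} and~\ref{pro:commdiag_R_Q_BQ} are available; the substantive input, Lemma~\ref{lem:TauActsByRackMor}, is already in hand.
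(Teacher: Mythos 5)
Your proposal is correct and follows essentially the same route as the paper: the biquandle case via the free monoid on the double $D(X)$, the cocycle $\varphi$ with $\varphi(x^{-1})=t(x)^{-1}$, and Lemma~\ref{lem:cocycles} (with conditions checked via \eqref{eqn:tau_action}, Lemma~\ref{lem:TauActsByRackMor}, and \eqref{eqn:ActionStrGroups}); the general case by reduction to the induced biquandle through Propositions~\ref{pro:birack_biquandle}, \ref{pro:commdiag_Sol_Q_BQ} and~\ref{pro:commdiag_R_Q_BQ}; and uniqueness because a group $1$-cocycle is determined by its values on generators. The paper additionally notes the alternative of working directly with non-biquandle solutions by replacing $t$ with the map $T$ of Lemma~\ref{lem:SqProperties}, which avoids your transport bookkeeping, but your version is equally valid.
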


\begin{proof}
If $(X,r)$ is a biquandle, then the argument preceding the theorem yields a bijective group $1$-cocycle $J=\overline{\varphi} \colon G_{(X,r)} \to G'_{(X,\op_r)}$, with $\overline{\varphi}(x)=x$ for all $x \in X$ by construction. This cocycle is unique, since a group $1$-cocycle is uniquely determined by its values on the generators. If $(X,r)$ is a general solution, one can replace it with its induced biquandle using Propositions~\ref{pro:commdiag_Sol_Q_BQ} and~\ref{pro:commdiag_R_Q_BQ}. Alternatively, one can work directly with non-biquandle solutions, replacing the map $t$ with the map $T$ from Lemma~\ref{lem:SqProperties}.
\end{proof}

This result appears in a weaker form and/or with a much more involved proof in \cite{Sol,LYZ,LebVen}. The map $J$ is called the \emph{$J$ map}, or the \emph{guitar map} there. 

For involutive $r$, $G'_{(X,\op_r)}$ is the free abelian group on $X$, and the diagram from the theorem yields the injectivity of $r$, its powerful characterization in group-theoretic terms \cite{ESS}, and an $I$-structure on $G_{(X,r)}$ \cite{GIVdB}. In the general case, the situation is much more delicate.

We finish this section with a graphical way of computing $\varphi(x_1\ldots x_m)=x'_1\ldots x'_m$ for $x_i \in X$, presented in Fig.~\ref{fig:Guitar}. The $1$-cocycle condition for $\varphi$ yields $x'_i=x_i^{x_{i-1}\ldots x_1} = \tau_{x_1}\cdots\tau_{x_{i-1}}(x_i)$. 
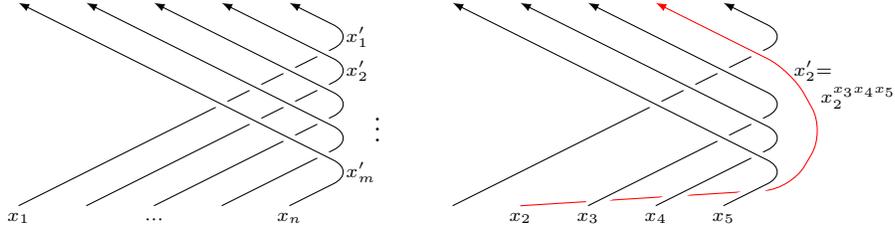
\begin{figure}[h]
\centering
\begin{tikzpicture}[scale=0.9,>=latex]
 \draw [->, rounded corners=10] (0,0) -- (5,2.5) -- (4,3);
 \draw [line width=4pt,white, rounded corners=10] (1,0) -- (5,2) -- (3,3); 
 \draw [->, rounded corners=10] (1,0) -- (5,2) -- (3,3); 
 \draw [line width=4pt,white, rounded corners=10] (2,0) -- (5,1.5) -- (2,3); 
 \draw [->, rounded corners=10] (2,0) -- (5,1.5) -- (2,3);
 \draw [line width=4pt,white, rounded corners=10] (3,0) -- (5,1) -- (1,3); 
 \draw [->, rounded corners=10] (3,0) -- (5,1) -- (1,3); 
 \draw [line width=4pt,white, rounded corners=10] (4,0) -- (5,0.5) -- (0,3);   
 \draw [->, rounded corners=10] (4,0) -- (5,0.5) -- (0,3);  
 \node at (0,-0.4) [above] {$\scriptstyle{x_1}$};  
 \node at (2,-0.4) [above] {$\scriptstyle{\cdots}$}; 
 \node at (4,-0.4) [above] {$\scriptstyle{x_n}$}; 
 \node at (4.7,2.5) [right] {$\scriptstyle{x'_1}$};   
 \node at (4.7,2) [right] {$\scriptstyle{x'_2}$}; 
 \node at (5,1.25) [right] {$\;\scriptstyle{\vdots}$};  
 \node at (4.7,0.5) [right] {$\scriptstyle{x'_m}$};  
\end{tikzpicture} 
\hspace*{15pt}
\begin{tikzpicture}[scale=0.9,>=latex]
 \draw [->, rounded corners=10] (0,0) -- (5,2.5) -- (4,3);
 \draw [line width=4pt,white, rounded corners=10] (1,0) -- (5,0.25)  -- (5.5,1.1) -- (5,2) -- (3,3); 
 \draw [->, rounded corners=10,red] (1,0) -- (5,0.25)  -- (5.5,1.1) -- (5,2) -- (3,3); 
 \draw [line width=4pt,white, rounded corners=10] (2,0) -- (5,1.5) -- (2,3); 
 \draw [->, rounded corners=10] (2,0) -- (5,1.5) -- (2,3);
 \draw [line width=4pt,white, rounded corners=10] (3,0) -- (5,1) -- (1,3); 
 \draw [->, rounded corners=10] (3,0) -- (5,1) -- (1,3); 
 \draw [line width=4pt,white, rounded corners=10] (4,0) -- (5,0.5) -- (0,3);   
 \draw [->, rounded corners=10] (4,0) -- (5,0.5) -- (0,3);  
 \node at (1,-0.4) [above] {$\scriptstyle{x_2}$};
 \node at (2,-0.4) [above] {$\scriptstyle{x_3}$};  
 \node at (3,-0.4) [above] {$\scriptstyle{x_4}$}; 
 \node at (4,-0.4) [above] {$\scriptstyle{x_5}$}; 
 \node at (4.9,2) [right] {$\scriptstyle{x'_2=}$}; 
 \node at (5.3,1.6) [right] {$\scriptstyle{x_2^{x_3 x_4 x_5}}$};    
\end{tikzpicture} 
   \caption{The guitar map $J$.}\label{fig:Guitar}
\end{figure}

\begin{rem}\label{r:braces}
Bijective group $1$-cocycles have many alternative descriptions: as braided commutative groups, as skew braces, as a skew version of linear cycle sets, as certain factorized groups \cite{LYZ, GIMajid, GV, SmoVen}. The results of this section can be translated into any of these languages. 
\end{rem}

\section{Finite quotients for structure groups of racks}\label{s:QuotientsRack}

We now describe finite quotients of structure groups of racks which preserve injectivity (that is, the injectivity of the map $\iota \colon X \to G_{(X,\op)}$).

\begin{notation}
Let $(X,\op)$ be a finite rack. Denote by 
\[K_{\op}=\#\Orb(X,\op)\] 
the number of its {orbits} with respect to the actions $\rho_y \colon  x \mapsto x \op y$. Further, for $y \in X$, denote by $D_y$ the minimal integer $D$ satisfying 
\[D \geq 2 \qquad \text{ and } \qquad \rho_y^D=\Id_X.\]
\end{notation}

\begin{thm}\label{thm:quotient_rack}
Let $(X,\op)$ be a finite rack. The powers $x^{D_x}$, $x \in X$, generate a central subgroup $Z_{(X,\op)}$ of the structure group $G_{(X,\op)}$. This subgroup is free abelian of rank $K_{\op}$. The quotient \[\oG_{(X,\op)} = G_{(X,\op)} / Z_{(X,\op)}\] is finite. Finally, the map $\iota \colon X \to G_{(X,\op)}$ is injective if and only if it remains so when composed with the quotient map $G_{(X,\op)} \twoheadrightarrow \oG_{(X,\op)}$.
\end{thm}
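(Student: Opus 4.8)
The plan is to analyze the structure group $G=G_{(X,\op)}$ through the bijective $1$-cocycle $J\colon G \to G'_{(X,\op_r)}$ from Theorem~\ref{thm:cocycle}; but here, since $(X,\op)$ is already a rack, the relevant cocycle identifies $G=G_{(X,\op)}$ with a "derived" object, and the maps $\tau_y$ for the solution $(X,r_{\op})$ are essentially translations built from $\op$. Concretely I would first pin down the $G$-action on the free group $F_X$ underlying the target of the cocycle: the action of the generator $x$ should be, up to the cocycle identification, the right translation $\rho_x$ (extended to a group automorphism of the free group on $X$). Then the relation $x y = \sigma_x(y)\tau_y(x)$ of the structure group translates, under $J$, into the relations of the free abelian-by-action form, and one reads off that $x^{D_x}$ maps under $J$ to $y_1 y_2 \cdots y_{D_x}$ where the $y_i$ run over the $\rho_x$-orbit of something — the point being that $\rho_x^{D_x}=\Id$ forces $x^{D_x}$ to act trivially, hence to be central, and its $J$-image is a product over a full orbit, which is why these powers land in a free abelian subgroup whose rank counts orbits.

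The key steps, in order. (1) Establish that $x^{D_x}$ is central in $G$: since $\tau$ induces an action of $G$ on $X$ (equation~\eqref{eqn:tau_action}) and for the SD solution $\tau_y$ is governed by $\rho_y$, the element $x^{D_x}$ acts trivially on $X$ via $\tau$ and via $\sigma$, and one checks using the defining relations that an element acting trivially on both sides and having the right cocycle image is central — alternatively, show directly $x^{D_x} g = g x^{D_x}$ on generators $g$ by pushing $x^{D_x}$ past $g$ using the braid relations $D_x$ times and using $\rho_x^{D_x}=\Id$. (2) Show $Z_{(X,\op)}=\langle x^{D_x} : x\in X\rangle$ is free abelian of rank $K_\op$: since it is central it is abelian; use $J$ to map it into $G'_{(X,\op_r)}$, observe that $x \op_r x' = x \op x'$-type orbits of $\op$ correspond to $\op_r$-orbits (the structure rack of $(X,r_\op)$ is $(X,\op)$ itself up to the $\Sq$-identification from Proposition~\ref{pro:IndBiquandleViaSq}), and that $J(x^{D_x})$ equals the product of the generators in the $\rho_x$-orbit of $x$; elements associated to the same orbit coincide, elements associated to distinct orbits are independent in the free abelian part, giving rank exactly $K_\op$. (3) Finiteness of $\oG_{(X,\op)}=G/Z_{(X,\op)}$: the cocycle $J$ descends (via Lemma~\ref{lem:cocycles}) to a bijection between $\oG_{(X,\op)}$ and a quotient of $G'_{(X,\op_r)}$ by the images of the $x^{D_x}$; the latter quotient is $\Z^{K_\op}$ modulo a finite-index subgroup times the finite combinatorial data of the rack action, so it is finite, hence $\oG_{(X,\op)}$ is finite. (4) The injectivity statement: this is exactly the conclusion of Lemma~\ref{lem:cocycles} combined with the observation that the composite $X \to G \twoheadrightarrow \oG$ corresponds under the (bijective) induced cocycle $\overline J$ to the composite $X \to G'_{(X,\op_r)} \twoheadrightarrow G'_{(X,\op_r)}/\overline{\cR}$, and since $\overline J$ is a bijection, one is injective iff the other is; but the target quotient of $G'_{(X,\op_r)}$ only collapses each $\op$-orbit's generator to a torsion-free cyclic group of finite index, so it separates distinct elements of $X$ exactly when $G'_{(X,\op_r)}$ does, i.e. exactly when $\iota$ is injective.

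For the injectivity equivalence specifically, the cleaner route I would take: the "only if" direction is trivial since if the composite $X \to \oG$ is injective then so is $X \to G$ (a factorization of an injection through $G$). For "if": assume $\iota\colon X\to G$ is injective; I must show $x \mapsto xZ_{(X,\op)}$ is still injective, i.e. that $Z_{(X,\op)}$ meets $X \cdot X^{-1}$ only trivially in the relevant sense — equivalently that no two distinct $x,x'\in X$ become equal mod $Z$. Using $J$: $x \equiv x' \pmod{Z}$ means $J(x) = x$ and $J(x')=x'$ differ by an element of $J(Z_{(X,\op)})$ in $G'_{(X,\op_r)}$; but $G'_{(X,\op_r)}$ is the structure group of a rack and its natural map from $X$ is injective precisely when $\iota$ is (by the same Theorem~\ref{thm:cocycle} diagram and bijectivity of $J$), so in $G'$ the elements $x,x'$ are distinct generators; since $J(Z_{(X,\op)})$ is generated by the orbit-products which are "homogeneous of degree $\geq 2$" while $x, x'$ are degree-one generators, $x$ and $x' \cdot z$ with $z\in J(Z)$ can only coincide if $z=1$, forcing $x = x'$. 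I expect the main obstacle to be making precise this degree/length argument inside $G'_{(X,\op_r)}$ — one needs a well-defined length or abelianization-degree function on $G'_{(X,\op_r)}$ (the structure group of a rack is a quotient of the free group and admits a $\Z$-valued length homomorphism sending each generator to $1$, since all defining relations are length-preserving), and to check carefully that $J(x^{D_x})$ really is the product over a full $\rho_x$-orbit and that orbit-products for distinct orbits are $\Z$-linearly independent in the abelianization; the centrality proof in step (1) and this independence claim are the two points requiring genuine (though still elementary) verification rather than bookkeeping.
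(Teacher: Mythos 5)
Your centrality computation (the ``alternatively'' route: push $x^{D_x}$ past a generator using $yx=x(y\op x)$ repeatedly and invoke $\rho_x^{D_x}=\Id$) is exactly the paper's argument, and your overall plan for the rank and injectivity statements is in the right spirit. But there is a genuine gap at the finiteness step (3), and it is the heart of the theorem. You propose to transport the problem through $J$ to $G'_{(X,\op_r)}$ and claim the quotient of that group by the images of the $x^{D_x}$ is ``$\Z^{K_\op}$ modulo a finite-index subgroup times the finite combinatorial data of the rack action.'' For the SD solution $r_\op$ the structure rack is $(X,\op)$ itself, so $G'_{(X,\op_r)}$ is just the anti-isomorphic copy $G'_{(X,\op)}$ of the group you started with: it is not free abelian (unless the rack is trivial), and the assertion that its quotient by the orbit powers is finite is precisely the statement being proved. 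The detour through $J$ therefore buys nothing here (the cocycle is only needed later, for general solutions in Theorem~\ref{thm:quotient}), and no finiteness argument remains. The paper closes this by a direct word-length bound: any element of $\oG_{(X,\op)}$ is represented by a word of length at most $2n(d-1)$ (with $n=\#X$, $d=\max_x D_x$), because a longer word repeats some letter $x^{\pm1}$ at least $d$ times, the relation $yx=x(y\op x)$ slides all its occurrences to the left without changing the length, and then $x^{\pm D_x}$ cancels in the quotient. Some argument of this kind is indispensable and is missing from your proposal.

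Two smaller points. First, your injectivity (and independence) argument as phrased leans on a single $\Z$-valued length homomorphism $x\mapsto 1$; this cannot exclude $x=x'z$ with $z$ a nontrivial combination such as $(x_1^{D_1})(x_2^{D_2})^{-1}$ of total degree zero. What is needed is the orbit-graded morphism $\theta\colon G_{(X,\op)}\to\bigoplus_i\Z\O_i$, $x\mapsto\O(x)$ (well defined since the defining relations abelianize to $x=x\op y$); then comparing coefficients and using $D_i\geq 2$ forces all exponents to vanish, which is exactly how the paper argues for both the rank and the injectivity preservation --- with no reference to $J$. You do mention independence ``in the abelianization,'' so the fix is mild, but the degree-one-versus-degree-two phrasing alone does not suffice. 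Second, you assert without proof that the powers attached to a single orbit coincide; this is needed for the rank to be exactly $K_\op$ (centrality alone only bounds the $\theta$-image, not the rank of $Z_{(X,\op)}$), and it follows from the same relation $x^ny=y(x\op y)^n$ together with $D_x=D_{x\op y}$, which in turn comes from $\rho_{x\op y}=\rho_y\rho_x\rho_y^{-1}$.
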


The last property from the theorem is called the \emph{injectivity preservation} for $\oG$. 

In the $1$-orbit case, these finite quotients were described in~\cite{GraHeckVen}. 

\begin{proof}
Since $\rho_{x \op y} = \rho_y\rho_x\rho_y^{-1}$ for all $x,y \in X$, one has $D_x=D_z$ for all $x,z$ from the same orbit of $X$. Further, relations
\begin{align*}
x y^n &= y^n \rho_y^n(x),&  x^n y &= y (x \op y)^n
\end{align*}
in $G_{(X,\op)}$ imply that the powers $y^{D_y}$ are central in $G_{(X,\op)}$, and that 
\[x^{D_x} = (x \op y)^{D_x} = (x \op y)^{D_{x \op y}} \;\text{ for all } x,y \in X.\] 
Thus, $x^{D_x} = z^{D_z}$ for all $x,z$ from the same orbit. Let $\O_1, \ldots, \O_k$, where $k=K_{\op}$, be the orbits of $(X,\op)$, let $x_1,\ldots,x_k$ be their representatives, and put $D_i=D_{x_i}$. From the defining relations of $G_{(X,\op)}$, one sees that there exists a group
morphism
\begin{align*}
\theta \colon G_{(X,\op)} &\to \oplus_i \Z \O_i \cong \Z^k,\\
x &\mapsto \O(x),
\end{align*} 
where $\O(x)$ is the orbit of~$x$. This map is clearly well defined and surjective. The powers $x_i^{D_{i}}$ are sent to linearly independent elements $\O_i^{D_i}$ of $\oplus_i \Z \O_i$. Therefore the abelian group $Z_{(X,\op)}$ is free abelian of rank $k$, with generators $x_1^{D_{1}},\ldots,x_k^{D_{k}}$.

To prove the finiteness of the quotient $\oG_{(X,\op)}$, we will show that any of its elements can be represented by a word from $G_{(X,\op)}$ with at most $2n(d-1)$ letters $x^{\pm 1}$, $x \in X$, where $n=\# X$, and $d$ is the maximum of the $D_x$. Indeed, a word with more than $2n(d-1)$ letters contains one of the $2n$ letters $x^{\pm 1}$ at least $d$ times. The relation $yx=x(y \op x)$ allows one to pull all occurrences of this letter to the left; the remaining letters might change, but not their number. Since $d \geq D_x$, one can replace $(x^{\pm 1})^d$ with $(x^{\pm 1})^{d-D_x}$ to get a shorter representative of the same word.

Finally, assume that the map $\iota \colon X \to G_{(X,\op)}$ is injective, but its composition with $G_{(X,\op)} \to \oG_{(X,\op)}$ is not. It means that there exist $x \neq z$ in $X$ with 
\[xz^{-1} = (x_1^{D_{1}})^{\alpha_1}\cdots (x_k^{D_{k}})^{\alpha_k} \in G_{(X,\op)}\]
for some $\alpha_i \in \Z$. Applying the above map $\theta$ to both sides, one gets $\O(x)-\O(z)=\sum_i D_i\alpha_i \O_i$. Since the $\O_i$ form a basis of $\oplus_i \Z \O_i$, and since $D_i \geq 2$ for all $i$, this implies $D_i\alpha_i=0$ and thus $\alpha_i=0$ for all $i$. But then $xz^{-1} = 1$ in $G_{(X,\op)}$, hence $\iota(x)=\iota(z)$, which is impossible for injective~$\iota$.
\end{proof}

Imposing $D_x \geq 1$ instead of $D_x \geq 2$ would be more natural, and would make most of the assertions of the theorem still hold true. The only exception is the injectivity preservation. Indeed, for the trivial rack $x \op y = x$, $G_{(X,\op)}$ is the free abelian group on $X$, so $\iota \colon X \to G_{(X,\op)}$ is injective. Further, $\rho_x^1=\Id_X$ for all $x$, and the powers $x^1=x$ generate the whole $G_{(X,\op)}$, so the quotient $\oG_{(X,r)}$ would be zero with this definition.

\begin{rem}\label{rem:quotient_rack_sym}
One can replace the groups $G_{(X,\op)}$ and $\oG_{(X,\op)}$ in Theorem~\ref{thm:quotient_rack} with their symmetric versions $G'_{(X,\op)}$ and $\oG'_{(X,\op)}$, constructed using the solution $r'_{\op}(x,y)=(y \op x, x)$ instead of $r_{\op}$. To see this, use the map $x_1^{\varepsilon_1} \ldots x_s^{\varepsilon_s} \mapsto x_s^{\varepsilon_s} \ldots x_1^{\varepsilon_1}$.
\end{rem}

Observe that the map~$\iota$ and its composition $\overline{\iota}$ with $G_{(X,\op)} \twoheadrightarrow \oG_{(X,\op)}$ are rack morphisms, where both groups are seen as \emph{conjugation quandles}, with $g \op_{Conj} h = h^{-1}gh$. Theorem~\ref{thm:quotient_rack} then yields a characterization of injective racks:

\begin{cor}
For a finite rack $(X,\op)$, the following conditions are equivalent:
\begin{enumerate}
\item $(X,\op)$ is injective;
\item $(X,\op)$ is isomorphic to a sub-quandle of a finite conjugation quandle;
\item $(X,\op)$ is isomorphic to a sub-quandle of a conjugation quandle.
\end{enumerate}
\end{cor}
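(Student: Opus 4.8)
The plan is to prove the chain of implications $(2)\Rightarrow(3)$ trivially, and $(3)\Rightarrow(1)$ and $(1)\Rightarrow(2)$ as the two substantive steps, with the last one being where Theorem~\ref{thm:quotient_rack} does the work.

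For $(3)\Rightarrow(1)$: suppose $(X,\op)$ embeds as a sub-quandle of $G^{Conj}$ for some group $G$, via an injective map $f\colon X\hookrightarrow G$ with $f(x\op y)=f(y)^{-1}f(x)f(y)$. The defining relations of $G_{(X,\op)}$ are $xy=y\,(x\op y)$, so I want a group homomorphism $G_{(X,\op)}\to G$ sending $x\mapsto f(x)$; for this I must check that $f(x)f(y)=f(y)f(x\op y)$ holds in $G$, which is exactly $f(x)f(y)=f(y)f(y)^{-1}f(x)f(y)$, a tautology. So $x\mapsto f(x)$ extends to a homomorphism $\psi\colon G_{(X,\op)}\to G$ with $\psi\circ\iota=f$. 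Since $f$ is injective, $\iota$ must be injective, i.e.\ $(X,\op)$ is injective.

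For $(1)\Rightarrow(2)$: assume $(X,\op)$ is injective. By Theorem~\ref{thm:quotient_rack}, the composite $\overline{\iota}\colon X\to\oG_{(X,\op)}$ with the quotient map is still injective, and $\oG_{(X,\op)}$ is a \emph{finite} group. As observed just before the corollary, $\overline{\iota}$ is a quandle morphism into the conjugation quandle $\oG_{(X,\op)}^{Conj}$: indeed in $G_{(X,\op)}$ one has $x\op y = y^{-1}xy$ (rewrite the relation $xy=y(x\op y)$), and this descends to the quotient. Hence $\overline{\iota}(X)$ is a sub-quandle of the finite conjugation quandle $\oG_{(X,\op)}^{Conj}$ isomorphic to $(X,\op)$, giving $(2)$. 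The implication $(2)\Rightarrow(3)$ is immediate since a finite conjugation quandle is in particular a conjugation quandle.

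I expect the only real subtlety to be the bookkeeping in $(1)\Rightarrow(2)$: one must be slightly careful that the relation $x\op y = y^{-1}xy$ genuinely holds in $G_{(X,\op)}$ (it follows from the presentation, since $xy=\sigma_x(y)\tau_y(x)=y\,(x\op y)$ for the solution $r_{\op}$), and that this identity is preserved under the quotient by the central subgroup $Z_{(X,\op)}$ — which is automatic for any quotient. Everything else is formal nonsense about presentations and conjugation quandles, so there is no genuine obstacle; the corollary is really just a repackaging of Theorem~\ref{thm:quotient_rack} plus the trivial observation that $\iota$ factors through conjugation.
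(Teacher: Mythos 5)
Your proof is correct and follows essentially the same route as the paper: $(1)\Rightarrow(2)$ via Theorem~\ref{thm:quotient_rack} together with the observation that $\overline{\iota}$ is a quandle morphism into the finite conjugation quandle $\oG_{(X,\op)}$, $(2)\Rightarrow(3)$ trivially, and $(3)\Rightarrow(1)$ by the universal property of $G_{(X,\op)}$, which you simply verify explicitly from the presentation $xy=y\,(x\op y)$ rather than citing it. No gaps.
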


\begin{proof}
Theorem~\ref{thm:quotient_rack} yields $1 \Rightarrow 2$, and $2 \Rightarrow 3$ is trivial. Recall the universal property of the structure group $G_{(X,\op)}$: for any group $H$ and any rack map $\phi \colon (X,\op) \to (H,\op_{Conj})$, there is a unique group map ${\phi^*} \colon G_{(X,\op)} \to (H,\op_{Conj})$ satisfying $\phi = {\phi^*} \iota$:
\[\xymatrix@!0 @R=1cm @C=2cm{
X \ar[r]^-{\iota} \ar[rd]_{\phi} & G_{(X,\op)} \ar@{.>}[d]^{\exists ! {\phi^*}}\\
&H}\]
Now, if $\phi$ is injective, then so is $\iota$. Hence $3 \Rightarrow 1$.
\end{proof}

The injectivity question for a rack is not at all obvious, as can be seen in the examples below. In the second one, the finite quotient was crucial to establish injectivity.

\begin{exa}
Let $X=\{1,2,\dots,8\}$ be the quandle given by
\begin{align*}
    &\rho_1 =(376)(485), && \rho_2=(376)(485), && \rho_3=(168)(257), && \rho_4 =(168)(257), \\
    &\rho_5=(174)(283), && \rho_6=(174)(283), && \rho_7=(135)(246), &&\rho_8=(135)(246).
\end{align*}
In $G'_{(X,\op)}$, one has $x_1x_3=x_7x_1=x_3x_7=x_2x_3$, hence $x_1=x_2$. Thus $X$ is not injective. 
\end{exa}

\begin{exa}
Let $X=\{1,2,3,4,5,6,7,8,9,a,b,c\}$ be the quandle given by
\begin{align*}
  &\rho_1 = (34)(59)(6a)(7c)(8b), && \rho_2=(34)(59)(6a)(7c)(8b), \\ &\rho_3 = (12)(5b)(6c)(7a)(89), && \rho_4=(12)(5b)(6c)(7a)(89), \\ &\rho_5 = (19)(2a)(3c)(4b)(78), && \rho_6=(19)(2a)(3c)(4b)(78), \\  &\rho_7 = (1c)(2b)(39)(4a)(56), && \rho_8=(1c)(2b)(39)(4a)(56), \\ &\rho_9 = (15)(26)(38)(47)(bc), && \rho_a=(15)(26)(38)(47)(bc),\\ &\rho_b = (17)(28)(36)(45)(9a), && \rho_c=(17)(28)(36)(45)(9a).
\end{align*}
Computer calculations show that $\oG'_{(X,\op)} \simeq \GL(2,3)$, and $X$ embeds into this finite group. More precisely, $(X,\op)$ is isomorphic to the sub-quandle of $\GL(2,3)$ consisting of the conjugates of $\begin{pmatrix} 1 & 0\\1 & 1\end{pmatrix}$. This rack is retractable (see Definition~\ref{def:retract}).
\end{exa}

\section{Finite quotients for structure groups of YBE solutions}\label{s:QuotientsSol}

We next turn to finite quotients of the structure group of a solution $(X,r)$.

\begin{notation}
Let $(X,r)$ be a YBE solution. Denote by 
\[K_{r}=K_{\op_r}=\#\Orb(X,\op_r)\] 
the number of orbits of its structure rack. Further, for $y \in X$ and $d \in \N$, put 
\[y^{[d]}=J^{-1}(y^d) \in G_{(X,r)}.\] 
Here $J$ is the bijective group $1$-cocycle $J \colon G_{(X,r)} \to G'_{(X,\op_r)}$ from Theorem~\ref{thm:cocycle}, with respect to the action $\ract$ of $G_{(X,r)}$ on $G'_{(X,\op_r)}$ defined by~\eqref{eqn:ActionStrGroups}.
\end{notation}

\begin{lem}\label{lem:powers_formula}
For any $y \in X$ and $d \in \N$, one has
\begin{align}\label{eqn:TwistedPowers}
&y^{[d]}= T^{d-1}(y) \ldots T(y) y,
\end{align}
where the map $T \colon X \to X$ sends $x$ to $\tau_x^{-1} (x)$.
\end{lem}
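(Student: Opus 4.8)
The plan is to prove the formula by induction on $d$, using the $1$-cocycle property of $J$ together with the description of the action $\ract$ from~\eqref{eqn:ActionStrGroups}. For $d=1$ the claim is $y^{[1]}=J^{-1}(y)=y$, which is immediate from Theorem~\ref{thm:cocycle} (the diagram there says $J(\iota(y))=\iota(y)$, i.e.\ $J(y)=y$). For the inductive step, suppose $y^{[d-1]}=J^{-1}(y^{d-1})=T^{d-2}(y)\cdots T(y)\,y$ in $G_{(X,r)}$. I want to compute $J^{-1}(y^d)$; equivalently, I want to exhibit an element $w\in G_{(X,r)}$ with $J(w)=y^d$ and then use injectivity/bijectivity of $J$.

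The key computation is to apply $J$ to the candidate word $w=T^{d-1}(y)\cdot T^{d-2}(y)\cdots T(y)\,y$. Writing $w=T^{d-1}(y)\cdot v$ with $v=T^{d-2}(y)\cdots T(y)\,y$ (so that by the inductive hypothesis $J(v)=y^{d-1}$), the cocycle identity gives
\[
J(w)=J(T^{d-1}(y))^{\,v}\cdot J(v)=\bigl(T^{d-1}(y)\bigr)^{\,v}\cdot y^{d-1},
\]
since $J$ fixes the generator $T^{d-1}(y)\in X$. So the whole proof reduces to the single identity
\[
\bigl(T^{d-1}(y)\bigr)^{\,v}=y \qquad\text{in } G'_{(X,\op_r)},
\]
where the action is the $\ract$-action of $G_{(X,r)}$ on $G'_{(X,\op_r)}$ extending~\eqref{eqn:ActionStrGroups}. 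Recall that under $\ract$ one has $x\ract z=\tau_z(x)$ for $x,z\in X$, extended multiplicatively; hence $\bigl(T^{d-1}(y)\bigr)^{\,v}=\tau_{T(y)}\tau_{T^2(y)}\cdots\tau_{T^{d-2}(y)}\tau_y\bigl(T^{d-1}(y)\bigr)$ (one must be careful that the action is by the word $v$ read in the correct order, and that $J(v)=y^{d-1}$ guarantees the action by $v$ and by $y^{d-1}$ agree — this is exactly the compatibility used implicitly, since $G_{(X,r)}$ acts on $G'_{(X,\op_r)}$ and $J$ intertwines the two). Thus what is really needed is the elementary ``tower'' fact that $\tau_y(T(y))=y$ and, more generally, $\tau_{T^{k}(y)}\bigl(T^{k+1}(y)\bigr)=T^{k}(y)$, so that successive applications of the $\tau$'s peel off the $T$'s one at a time and return $y$. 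The relation $\tau_y(T(y))=y$ is precisely the defining property of $T$ from Lemma~\ref{lem:SqProperties} (namely $\tau_{T(z)}(\,\cdot\,)$ and the biquandle-type identity $r(T(z),z)=(\Sq(z),z)$ in disguise), applied with $z$ replaced by $T^{k}(y)$.

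The main obstacle I anticipate is bookkeeping the order of the action: the cocycle identity $J(g_1g_2)=J(g_1)^{g_2}J(g_2)$ together with~\eqref{eqn:ActionStrGroups} means that the factors of $v$ act on $T^{d-1}(y)$ in a specific order, and one must check that peeling produces exactly $y$ and not some permuted or twisted variant; the clean way to manage this is to fold the verification of $\bigl(T^{d-1}(y)\bigr)^{\,v}=y$ into the same induction, i.e.\ prove simultaneously that $J\bigl(T^{d-1}(y)\cdots T(y)\,y\bigr)=y^{d}$, using at each stage only the single-letter identity $\tau_{T(z)}$ applied to its own ``predecessor'' $z$. Alternatively — and this is probably the slickest route — one can bypass the action bookkeeping entirely by arguing graphically: the word $y^{[d]}$ corresponds under the guitar map $J$ (Fig.~\ref{fig:Guitar}) to stacking $d$ copies of the crossing $r(T^{k}(y),\cdot)$ as in the tower of Fig.~\ref{fig:tower}, and reading off the top-right strands gives $y,y,\dots,y$ ($d$ times), i.e.\ $J(T^{d-1}(y)\cdots T(y)\,y)=y^d$; then $y^{[d]}=J^{-1}(y^d)=T^{d-1}(y)\cdots T(y)\,y$ by bijectivity of $J$. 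Either way the statement follows; I would present the inductive/cocycle argument as the main proof and mention the picture as illumination.
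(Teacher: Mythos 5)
Your proposal is correct and takes essentially the same route as the paper: there, too, one shows $J(T^{d-1}(y)\cdots T(y)\,y)=y^d$ via the $1$-cocycle property, peeling off one letter at a time with the identity $T(x)^x=\tau_x(T(x))=x$ (i.e.\ $\tau_{T^{k}(y)}(T^{k+1}(y))=T^{k}(y)$), and then applies $J^{-1}$. The one slip is your explicit composite $\tau_{T(y)}\cdots\tau_{T^{d-2}(y)}\tau_y$: for the right action the letter adjacent to $T^{d-1}(y)$, namely $T^{d-2}(y)$, acts first, so the correct composite is $\tau_y\tau_{T(y)}\cdots\tau_{T^{d-2}(y)}$ — but the fix you yourself propose (peeling with the adjacent letter, folded into the induction) is exactly the paper's computation.
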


Formula~\eqref{eqn:TwistedPowers} explains the name \emph{$T$-twisted powers of $y$} we will give to the $y^{[d]}$.

\begin{proof}
By definition of the map $T$, one has $T(x)^x=\tau_xT(x)=x$ for all~$x$. So, for all $m \in \N$,
\[T^m(y)^{T^{m-1}(y) \ldots T(y) y} = (T^m(y)^{T^{m-1}(y)})^{T^{m-2}(y) \ldots T(y) y} = T^{m-1}(y)^{T^{m-2}(y) \ldots T(y) y},\]
which, after several iterations, yields $y$. Using this and the $1$-cocycle property for $J$, one gets $J(T^{d-1}(y) \ldots T(y) y)=y^d$, so $y^{[d]}= J^{-1}(y^d) = T^{d-1}(y) \ldots T(y) y$.
\end{proof}

\begin{notation}
By \eqref{eqn:tau_action}--\eqref{eqn:sigma_action}, the group $G_{(X,r)}$ acts on the set~$X$ in several ways. We will denote these actions as follows:
\begin{align*}
&x \racts y = x^y = \tau_y(x), && y \lacts x = {}^y\!x = \sigma_y(x),\\
&x \ractsi y = \htau_y(x), && y \lactsi x =  \hsigma_y(x)
\end{align*}
for all $x,y \in X$. 
The superscript notation will be used only when it cannot be mistaken for the analogous notation for the $G_{(X,r)}$-action on $G'_{(X,\op_r)}$.
\end{notation}

\begin{lem}\label{lem:degree_exists}
Let $(X,r)$ be a solution. For any $y \in X$ there exists a $d \in \N$ such that $x \racts y^{[m]} = y^{[m]} \lacts x = x$ for all $x \in X$ and all $m \in \N$ divisible by $d$.
\end{lem}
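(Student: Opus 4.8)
The plan is to relate the $G_{(X,r)}$-actions $\racts$ and $\lacts$ on $X$ to the already-understood conjugation action on $G'_{(X,\op_r)}$ via the $1$-cocycle $J$, and then invoke Theorem~\ref{thm:quotient_rack} applied to the structure rack $(X,\op_r)$. First I would observe that for any group element $g\in G_{(X,r)}$ and any $x\in X$, the action $x\racts g$ is obtained by conjugating the image $\iota(x)=x$ inside $G'_{(X,\op_r)}$ by $J(g)$, up to reading off which generator the result equals; more precisely, using~\eqref{eqn:ActionStrGroups} and the $1$-cocycle property, one checks that $J(g)^{-1}\,x\,J(g)=(x\racts g)$ holds in $G'_{(X,\op_r)}$ when both sides are expressed through $\iota$, and symmetrically for $\lacts$. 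This reduces the claim to finding a $d$ such that conjugation by $J(y^{[m]})$ fixes every generator $x$ of $G'_{(X,\op_r)}$ whenever $d\mid m$.

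Next I would identify $J(y^{[m]})$ explicitly: by definition $y^{[m]}=J^{-1}(y^m)$, so $J(y^{[m]})=y^m\in G'_{(X,\op_r)}$. Thus the whole question becomes: for which $m$ does the power $y^m$ act trivially by conjugation on $G'_{(X,\op_r)}$? Now recall from the proof of Theorem~\ref{thm:quotient_rack} (applied to $r'_{\op_r}$, cf.\ Remark~\ref{rem:quotient_rack_sym}) the relation $x y^n = y^n \rho_y^n(x)$ in $G'_{(X,\op_r)}$, where $\rho_y\colon x\mapsto x\op_r y$. Hence $y^{-n} x y^n = \rho_y^n(x)$, and conjugation by $y^n$ on the generators of $G'_{(X,\op_r)}$ is exactly the permutation $\rho_y^n$ of $X$. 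Since $X$ is finite, $\rho_y$ has finite order; take $d=D_y$ (or simply the order of $\rho_y$ in $\Sym(X)$), so that $\rho_y^m=\Id_X$ whenever $d\mid m$. Then $y^m$ is central in $G'_{(X,\op_r)}$, so conjugation by it is trivial on every generator, and unwinding the identification of the actions via $J$ gives $x\racts y^{[m]}=x$ and $y^{[m]}\lacts x=x$ for all $x$.

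The main technical obstacle I anticipate is setting up cleanly the dictionary between the $G_{(X,r)}$-actions $\racts,\lacts$ on $X$ and conjugation in $G'_{(X,\op_r)}$ through $J$, because $J$ is only a \emph{twisted} morphism and the action $\ract$ of $G_{(X,r)}$ on $G'_{(X,\op_r)}$ must be tracked carefully; one has to verify that $x\racts g$ as defined via $\tau$ really coincides with the conjugate $J(g)^{-1}\iota(x)J(g)$ rewritten as an element of $X\hookrightarrow G'_{(X,\op_r)}$. Once this compatibility is established — which is essentially a repackaging of~\eqref{eqn:ActionStrGroups}, the cocycle identity, and Lemma~\ref{lem:powers_formula} — the rest is immediate from the finiteness of $\Sym(X)$. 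I would therefore spend most of the write-up on this compatibility step and then conclude in one line, choosing for instance $d$ to be the order of $\rho_y$ acting on $X$ so that $y^m$ becomes central in $G'_{(X,\op_r)}$ for $d\mid m$.
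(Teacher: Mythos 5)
There is a genuine gap, and it is fatal: the ``dictionary'' you build the whole argument on is false. The right action $\racts$ of $G_{(X,r)}$ on $X$ (and its extension $\ract$ to $G'_{(X,\op_r)}$ from~\eqref{eqn:ActionStrGroups}) is the action by the maps $\tau$, and it is \emph{not} conjugation by $J(g)$ inside $G'_{(X,\op_r)}$. In $G'_{(X,\op_r)}$ the defining relations $xy=(y\op_r x)x$ do give $y\,x\,y^{-1}=\rho_y(x)$, so conjugation by $y^m$ acts on generators by $\rho_y^m$; but $x\racts y=\tau_y(x)$, and $\tau_y$ differs from $\rho_y^{\pm1}$ in general (recall $\rho_y=\tau_y\htau_y^{-1}$). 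The involutive case makes the failure stark: there $(X,\op_r)$ is trivial, $G'_{(X,\op_r)}$ is free abelian on $X$, so conjugation by anything is trivial and every $\rho_y=\Id_X$ has order $1$ --- yet $\tau_y$ is typically non-trivial. Concretely, for the cyclic involutive solution on $X=\{0,1,2\}$ with $r(x,y)=(y+1,x-1)$ one has $\rho_y=\Id_X$, $D_y=2$, while $x\racts y^{[2]}=\tau_y\tau_{T(y)}(x)=x-2\neq x$; so neither $d=$ (order of $\rho_y$) nor $d=D_y$ works, and in fact $3\mid d$ is forced here. Thus not only the compatibility step you flagged as the ``main technical obstacle'' fails, but the conclusion you would extract from it (triviality of the action as soon as $\rho_y^m=\Id_X$, i.e.\ as soon as $y^m$ is central in $G'_{(X,\op_r)}$) is simply wrong; centrality in $G'_{(X,\op_r)}$ controls conjugation, not the $\ract$-action, and this distinction is exactly why the degree $d_y$ of Definition~\ref{def:degree} has to impose condition~(3) \emph{in addition to} $\rho_y^d=\Id_X$.

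The statement does not need any transport through $J$ at all: the intended argument is elementary and stays inside $X$. By Lemma~\ref{lem:powers_formula}, $y^{[m]}=T^{m-1}(y)\cdots T(y)\,y$; since $T$ is a bijection of the finite set $X$ (Lemma~\ref{lem:SqProperties}), $T^p=\Id_X$ for some $p$, whence $y^{[pm]}=(y^{[p]})^m$ in $G_{(X,r)}$. The map $x\mapsto x\racts y^{[p]}$ is then a permutation of the finite set $X$, so some power $q$ of it is the identity, giving $x\racts y^{[pqs]}=x$ for all $s$; arguing symmetrically for $\lacts$ and taking a least common multiple yields the desired $d$. If you want to salvage your approach, you would have to replace ``conjugation by $J(y^{[m]})$'' by the genuine action $\ract$ and then prove its periodicity in $m$ --- which is exactly the finiteness argument above, not a centrality statement in $G'_{(X,\op_r)}$.
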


\begin{proof}
By Lemma~\ref{lem:SqProperties}, $T$ is a bijection. Since $X$ is finite, $T^p=\Id_X$ for some $p \in \N$. The map $\psi(x)= x \racts y^{[p]}$ is a bijection on~$X$ as well, so $\psi^q=\Id_X$ for some $q \in \N$. From~\eqref{eqn:TwistedPowers}, one deduces $y^{[pm]} = (y^{[p]})^m$ for all $m \in \N$, hence $x \racts y^{[pqs]} = \psi^{qs}(x)=x$ for all $x \in X$, $s \in \N$. A similar argument yields a $q' \in \N$ with $y^{[pq's]} \lacts x = x$ for all $x \in X$, $s \in \N$. Then $d=p (q \vee q')$, where $\vee$ stands for the least common multiple, satisfies our requirements. 
\end{proof} 

Lemma~\ref{lem:degree_exists} justifies the following definition.

\begin{defn}\label{def:degree}
Let $(X,r)$ be a solution, and $y \in X$. The \emph{degree} $d_y$ of $y$ is the minimal positive integer~$d$ satisfying the following conditions:
\begin{enumerate}
\item\label{it:degreeNonTrivial} $d$ is even if $\rho_y=\Id_X$ for the map $\rho_y \colon x \mapsto x \op_r y$;
\item\label{it:degreeSD} $\rho_y^d=\Id_X$;
\item\label{it:degreeActions} $x \racts y^{[d]} = y^{[d]} \lacts x = x$ for all $x \in X$.
\end{enumerate} 
\end{defn}

Everything is now ready for the main result of this section:

\begin{thm}\label{thm:quotient}
Let $(X,r)$ be a solution. The powers $x^{[d_x]}$, $x \in X$, generate a normal subgroup $Z_{(X,r)}$ of the structure group $G_{(X,r)}$. This subgroup is free abelian of rank $K_{r}$. The quotient \[\oG_{(X,r)} = G_{(X,r)} / Z_{(X,r)}\] is finite. Finally, the map $\iota \colon X \to G_{(X,r)}$ is injective if and only if it remains so when composed with the quotient map $G_{(X,r)} \to \oG_{(X,r)}$.
\end{thm}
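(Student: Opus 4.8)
The plan is to transport everything through the bijective $1$-cocycle $J \colon G_{(X,r)} \to G'_{(X,\op_r)}$ of Theorem~\ref{thm:cocycle}, reducing the statement to the rack case already settled in Theorem~\ref{thm:quotient_rack}. First I would observe that, by construction, $Z_{(X,r)}$ is generated by the $T$-twisted powers $x^{[d_x]} = J^{-1}(x^{d_x})$, and that $d_x$ was arranged in Definition~\ref{def:degree} to be a common multiple of the integer $D_x$ from the rack construction (conditions~\eqref{it:degreeNonTrivial}--\eqref{it:degreeSD}) and of the period $d$ produced by Lemma~\ref{lem:degree_exists} (condition~\eqref{it:degreeActions}). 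In particular $d_x = d_z$ for $x,z$ in the same $\op_r$-orbit, and by the rack relations $x^{d_x} = z^{d_z}$ in $G'_{(X,\op_r)}$ for such $x,z$; applying $J^{-1}$ gives $x^{[d_x]} = z^{[d_z]}$, so $Z_{(X,r)}$ has at most $K_r$ generators, one per orbit.

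Next I would establish normality. The subgroup of $G'_{(X,\op_r)}$ generated by the $x^{d_x}$ is exactly the subgroup $Z'_{(X,\op_r)}$ of Theorem~\ref{thm:quotient_rack} (in its symmetric form, Remark~\ref{rem:quotient_rack_sym}), which is central, hence normal, and free abelian of rank $K_r$. The key point is that $J$ intertwines conjugation in $G_{(X,r)}$ with the twisted action $\ract$ up to a $1$-cocycle correction: from $J(g h g^{-1}) = J(g)^{h g^{-1}} J(h)^{g^{-1}} J(g^{-1})$ and the cocycle identity one checks that $J$ carries a conjugate $g\, x^{[d_x]}\, g^{-1}$ to an element of the form $J(x^{d_x})^{g^{-1}}$ times a fixed correction, i.e.\ into the $G_{(X,r)}$-orbit of $x^{d_x}$ under $\ract$. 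Condition~\eqref{it:degreeActions} of Definition~\ref{def:degree} says precisely that $x^{d_x}$ (equivalently $x^{[d_x]}$) is fixed by the relevant $\ract$-action, so $g\, x^{[d_x]}\, g^{-1}$ maps under $J$ into $Z'_{(X,\op_r)}$; since $J$ is a bijection and $Z'_{(X,\op_r)}$ is a genuine subgroup, $Z_{(X,r)} = J^{-1}(Z'_{(X,\op_r)})$ is a subgroup, normal in $G_{(X,r)}$, and the restriction of $J$ gives a bijection $Z_{(X,r)} \cong Z'_{(X,\op_r)} \cong \Z^{K_r}$, so $Z_{(X,r)}$ is free abelian of rank $K_r$.

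Because $Z_{(X,r)} = J^{-1}(Z'_{(X,\op_r)})$ as sets, $J$ descends to a bijection $\oG_{(X,r)} = G_{(X,r)}/Z_{(X,r)} \to G'_{(X,\op_r)}/Z'_{(X,\op_r)} = \oG'_{(X,\op_r)}$ (even just as pointed sets, which is all we need), and the latter is finite by Theorem~\ref{thm:quotient_rack}; hence $\oG_{(X,r)}$ is finite. For injectivity preservation, note that $J$ is the identity on $X$ by Theorem~\ref{thm:cocycle}, so the composite $X \to G_{(X,r)} \to \oG_{(X,r)}$ is identified, via the bijection above, with $X \to G'_{(X,\op_r)} \to \oG'_{(X,\op_r)}$; thus $\iota \colon X \to G_{(X,r)}$ is injective iff $X \to G'_{(X,\op_r)}$ is (as $J$ restricted to $X$ is the identity and $J$ is bijective), iff $X \to \oG'_{(X,\op_r)}$ is (by the rack theorem), iff $X \to \oG_{(X,r)}$ is.

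I expect the main obstacle to be the intertwining computation in the second paragraph: one has to be careful that conjugation in $G_{(X,r)}$ really does translate, under the twisted $1$-cocycle $J$, into the $\ract$-action appearing in condition~\eqref{it:degreeActions}, rather than into some other action; the bookkeeping with the cocycle correction terms and with inverses must be done precisely, and it is here that the somewhat technical Definition~\ref{def:degree} is seen to contain exactly the right hypotheses.
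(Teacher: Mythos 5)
Your overall route --- transporting everything through the bijective $1$-cocycle $J$ and reducing to the rack case of Theorem~\ref{thm:quotient_rack} --- is exactly the paper's, and your use of condition~\eqref{it:degreeActions} to make $J$ multiplicative on $Z_{(X,r)}$ and to control conjugation is the right idea. The genuine gap is the identification of the subgroup of $G'_{(X,\op_r)}$ generated by the $x^{d_x}$ with the subgroup generated by the $x^{D_x}$ (your $Z'_{(X,\op_r)}$), and hence of $\oG_{(X,r)}$ with $\oG'_{(X,\op_r)}$. The degree $d_x$ is only a \emph{multiple} of $D_x$, and condition~\eqref{it:degreeActions} typically forces it to be a proper multiple (already for involutive solutions one has $D_x=2$ while $d_x$ can be larger). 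So $J(Z_{(X,r)})=\langle x^{d_x}\,|\,x\in X\rangle$ is in general only a finite-index subgroup of $\langle x^{D_x}\rangle$, and the quotient your bijection actually lands in is $H=G'_{(X,\op_r)}/\langle x^{d_x}\rangle$, of which $\oG'_{(X,\op_r)}$ is in general a proper quotient. Consequently your appeals to Theorem~\ref{thm:quotient_rack} for the finiteness of the target and for injectivity preservation do not literally apply: one must re-run the argument of Theorem~\ref{thm:quotient_rack} for $H$ (it does repeat verbatim, since $d_x\geq 2$ and $x^{d_x}=z^{d_z}$ for $x,z$ in the same orbit), or argue injectivity through the further quotient $H\twoheadrightarrow\oG'_{(X,\op_r)}$ and prove finiteness of $H$ separately; this is precisely the point the paper's proof is careful about. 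The conclusions ``normal'' and ``free abelian of rank $K_r$'' survive your misidentification ($\langle x^{d_x}\rangle$ is still central in $G'_{(X,\op_r)}$ and is generated by $K_r$ independent elements), so those parts are fixable on the spot, but as written $Z_{(X,r)}=J^{-1}(Z'_{(X,\op_r)})$ and $Z_{(X,r)}\cong Z'_{(X,\op_r)}$ are false claims.

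A secondary weak point: you assert that $J$ descends to a bijection of the quotients ``because $Z_{(X,r)}=J^{-1}(\cdots)$ as sets''. Since $J$ is not a homomorphism, a set-level preimage statement does not by itself send cosets to cosets. What is needed is the cocycle identity together with the fact that every element of $Z_{(X,r)}$ acts trivially on $G'_{(X,\op_r)}$ (this is where condition~\eqref{it:degreeActions} enters, via $g\ract y^{[d_y]}=g$), giving $J(gZ_{(X,r)})=J(g)\,J(Z_{(X,r)})$, plus the stability of the relation set $\{x^{d_x}=1\}$ under the $G_{(X,r)}$-action (Lemmas~\ref{lem:T_r} and~\ref{lem:degrees_coinside}); these are exactly the hypotheses of Lemma~\ref{lem:cocycles}, which the paper invokes to obtain the induced bijective cocycle $\oJ\colon\oG_{(X,r)}\to H$. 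You have all of these ingredients scattered in your sketch, but the descent step itself is not justified as written.
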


The theorem gives us the following short exact sequence:
\begin{align*}
&0 \to \Z^{K_{r}} \to G_{(X,r)} \to \oG_{(X,r)} \to 0.
\end{align*}

The proof combines finite quotients of the structure group $G'_{(X,\op_r)}$ of the structure rack of our solution (Theorem~\ref{thm:quotient_rack} and Remark~\ref{rem:quotient_rack_sym}); the bijective cocycle $J$ as a means of transport between $G_{(X,r)}$ and $G'_{(X,\op_r)}$ (Theorem~\ref{thm:cocycle} and Lemma~\ref{lem:cocycles}); and a study of $T$-twisted powers, on which we now concentrate.

Our solution $r$ extends from $X$ to the (infinite) set $T(X)=\sqcup_{n\geq 0}X^{\times n}$ in the obvious way, as shown in Fig.~\ref{fig:TX}. We keep the notation $r$ for this extended solution, and $\sigma$ and $\tau$ for its components.
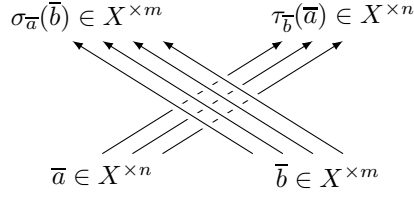
\begin{figure}[h]
\begin{tikzpicture}[xscale=0.4,yscale=0.25,>=latex]
\draw [->] (1,0)--(7,6);
\draw [->] (2,0)--(8,6);
\draw [->] (3,0)--(9,6);
\draw [line width=4pt,white] (6,0)--(0,6);
\draw [line width=4pt,white] (7,0)--(1,6);
\draw [line width=4pt,white] (8,0)--(2,6);
\draw [line width=4pt,white] (9,0)--(3,6);
\draw [->] (6,0)--(0,6);
\draw [->] (7,0)--(1,6);
\draw [->] (8,0)--(2,6);
\draw [->] (9,0)--(3,6);
\node [below] at (1,0) {${\overline{a}} \in X^{\times n}$};
\node [below] at (8.5,0) {${\overline{b}} \in X^{\times m}$};
\node [above] at (0.5,6) {$\sigma_{\overline{a}}(\overline{b}) \in X^{\times m}$};
\node [above] at (9,6) {$\tau_{\overline{b}}(\overline{a}) \in X^{\times n}$};
\end{tikzpicture}
\caption{Solution on~$X$ extended to~$T(X)$.}\label{fig:TX}
\end{figure}

\begin{lem}\label{lem:T_r}
For any $y,z \in X$ and $m\in \N$, one has
\begin{align*}
&r(y^{[m]},z) = (y^{[m]} \lacts z,(y^z)^{[m]}), && r(z,y^{[m]}) = (u^{[m]},z \racts y^{[m]}),
\end{align*}
where $u=\htau_{z'}^{-1}(y)$, $z' = z \racts y^{[m]}$, and $\htau$ is the right component of~$r^{-1}$. 
\end{lem}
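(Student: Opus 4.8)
The plan is to prove both formulas by expressing everything in terms of $T$-twisted words and using the extended solution on $T(X)$ together with the $1$-cocycle picture. First I would unwind the left-hand formula $r(y^{[m]},z) = (y^{[m]} \lacts z,(y^z)^{[m]})$. By Lemma~\ref{lem:powers_formula}, $y^{[m]}$ is represented in $G_{(X,r)}$ by the word $T^{m-1}(y)\ldots T(y)\, y$, which we view as the element $\overline{a}=(T^{m-1}(y),\ldots,T(y),y)\in X^{\times m}$ inside $T(X)$. Applying the extended solution $r$ to $(\overline{a},z)$ pushes $z$ down through the column of letters, so its final image is $\sigma_{\overline{a}}(z)=\sigma_{T^{m-1}(y)}\cdots\sigma_{y}(z)$; by our notation this is exactly $y^{[m]}\lacts z$ (the left $G_{(X,r)}$-action via $\sigma$, applied through the word $y^{[m]}$). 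Simultaneously, $z$ acts on the column of letters on the right via $\tau$: the resulting column is $\tau_z$ applied entrywise, i.e. $(\tau_z T^{m-1}(y),\ldots)$. The crucial point is that this new column is again a $T$-twisted power, namely of $\tau_z(y)=y^z$. This follows from the key intertwining relation $T\tau_z=\tau_{\htau_z(\text{-})} T$ (equivalently, one of the identities $T\sigma_y=\htau_y^{-1}T$ proved inside Proposition~\ref{pro:LeftRightStrRack}, dualized), which lets one slide $T$ past $\tau_z$ at each step and recognize the transformed column as $T^{m-1}(y^z)\ldots T(y^z)\, y^z$; invoking Lemma~\ref{lem:powers_formula} again identifies this with $(y^z)^{[m]}$.

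Second I would treat the right-hand formula $r(z,y^{[m]}) = (u^{[m]},z\racts y^{[m]})$ with $u=\htau_{z'}^{-1}(y)$, $z'=z\racts y^{[m]}$. The structure is symmetric: now $z$ is the single strand on the left and the column $\overline{a}$ representing $y^{[m]}$ is on the right. Pushing $z$ through the column via the extended $r$ gives $\tau_{\overline{a}}(z)=\tau_y\tau_{T(y)}\cdots(z)$, which by definition of the right action through the word $y^{[m]}$ is precisely $z\racts y^{[m]}=z'$. The column of letters, acted on the left by $z$ via $\sigma$, becomes a new column, and I must show it is $T^{m-1}(u)\ldots T(u)\, u$ with $u$ as specified. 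Here the cleanest route is to use the $1$-cocycle $J$ and Lemma~\ref{lem:cocycles}: since $J$ is a bijective $1$-cocycle intertwining the two structures, $r(z,y^{[m]})$ corresponds under $J$ to the analogous computation in $G'_{(X,\op_r)}$, where $y^{[m]}$ becomes $y^m$; the braid-type relation in $G'_{(X,\op_r)}$ makes the transported column visibly a $T$-twisted power, and pulling back identifies its base element. The formula $u=\htau_{z'}^{-1}(y)$ is then forced by non-degeneracy: the top two strands and one bottom strand of the $r$-crossing determine the rest, so $u$ is the unique element with $\tau_{z'}(\text{base}) $ recovering $y$, i.e. $u=\htau_{z'}^{-1}(y)$.

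The main obstacle I anticipate is the bookkeeping in the first claim: verifying that applying $\tau_z$ entrywise to the twisted column $(T^{m-1}(y),\ldots,y)$ yields exactly the twisted column for $y^z$, rather than some scrambled version. This is where one genuinely needs the interplay between $T$ and $\tau$ — the equivalences from Lemma~\ref{lem:SqProperties} ($\tau_w(x)=w \Leftrightarrow x=T(w)$) and the intertwining relations extracted in the proof of Proposition~\ref{pro:LeftRightStrRack} — and it is easy to get the order of composition or the direction of the action wrong. I would either carry this out carefully by induction on $m$ (peeling off the bottom strand $y$, applying the $m=1$ case $r(y,z)=(\sigma_y(z),\tau_z(y))=(y\lacts z,(y^z)^{[1]})$, and then re-indexing), or, if the excerpt's graphical calculus has set it up, give the Reidemeister-III picture on $T(X)$ that makes both identities simultaneously transparent, with the induction as the rigorous backbone. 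The second claim should then follow by the same argument applied to $r^{-1}$, or directly by symmetry plus the non-degeneracy bookkeeping for $u$.
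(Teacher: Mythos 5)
Your overall skeleton — read $y^{[m]}$ as the word $T^{m-1}(y)\ldots T(y)\,y$ via Lemma~\ref{lem:powers_formula}, compute the extended solution on $T(X)$, and induct on $m$ by peeling off one strand — is exactly the shape of the paper's proof. But the step you call ``crucial'' is wrong as stated, and it is the whole content of the lemma. When the block $T^{m-1}(y)\ldots T(y)\,y$ crosses over $z$, the resulting column is \emph{not} $\tau_z$ applied entrywise: the letter $T(y)$ meets not $z$ but the already-transformed strand $\sigma_y(z)$, so the second letter from the bottom is $\tau_{\sigma_y(z)}(T(y))$, and so on up the column. Concretely, for the biquandle $r(x,y)=(y+1,\,1-x-y)$ on $\Z_3$ (solution 3(b) of the appendix) one has $T(0)=1$, $0^{[2]}=1\cdot 0$, and $r(0^{[2]},0)$ has second component $\tau_{\sigma_0(0)}(1)\,\tau_0(0)=2\cdot 1=1^{[2]}=(0^0)^{[2]}$ as the lemma predicts, whereas entrywise $\tau_0$ gives $0\cdot 1$. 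The same example kills the patch you propose: $T$ does not commute with $\tau_z$ ($T\tau_0(0)=2$ but $\tau_0T(0)=0$), and the intertwining relations from Proposition~\ref{pro:LeftRightStrRack} involve $\sigma,\hsigma$, not $\tau$, so ``sliding $T$ past $\tau_z$'' is not available. The identity you actually need — and which the paper isolates as its two-letter building block $r(T(y)y,z)=((T(y)y)\lacts z,\;T(y^z)\,y^z)$ — is $\tau_{\sigma_y(z)}(T(y))=T(\tau_z(y))$; it follows either from the YBE applied to $(T(y),y,z)$ together with $\tau_y(T(y))=y$ and non-degeneracy (the paper's route), or directly from~\eqref{eqn:tau_action}, since $\tau_z\tau_y=\tau_{\tau_z(y)}\tau_{\sigma_y(z)}$ gives $\tau_{\sigma_y(z)}(T(y))=\tau_{\tau_z(y)}^{-1}\tau_z(y)$. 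With that identity in hand, your peel-off-the-bottom-strand induction does go through; without it, your argument proves a false intermediate statement.

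The second formula has a similar problem: appealing to the bijective $1$-cocycle $J$ and Lemma~\ref{lem:cocycles} does not by itself show that the $\sigma$-transported column is again a $T$-twisted power — $J$ is only a cocycle, and nothing in that lemma speaks about how the extended map $r$ acts on words letter by letter, which is what must be controlled here. The paper instead proves the mirror two-letter property $r(z,\,yT^{-1}(y))=({}^z\!y\,T^{-1}({}^z\!y),\;z\racts(yT^{-1}(y)))$ by the symmetric YBE computation and iterates it; only after one knows the first component is some $v^{[m]}$ does your non-degeneracy argument correctly pin down the base as $u=\htau_{z'}^{-1}(y)$. So the endpoint of your second paragraph is fine, but the bridge to it is missing.
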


The first relations implies that the map $\Delta_m \colon X \to X^{\times m}$, \, $y \mapsto y^{[m]}$, is $G_{(X,r)}$-equivariant, where $G_{(X,r)}$ acts on the powers of~$X$ on the right by the (extension of the) $\tau$ component of~$r$: $\tau_z(y^{[m]})=\tau_z(y)^{[m]}$. The equivariance with respect to the left actions via~$\sigma$ does not hold in general.

\begin{proof}
Both statements follow by repeatedly applying the following property:
\begin{align*}
&r(T(y)y,z) = ((T(y)y) \lacts z,T(y^z)y^z), \\
& r(z,yT^{-1}(y)) = ({}^z\!yT^{-1}({}^z\!y),z \racts (yT^{-1}(y))).
\end{align*}
We'll prove the first property, the second one being similar. The extension of~$r$ works as follows:
\[r(T(y)y,z) = (\Id_X \times c)r_1r_2(T(y),y,z)=((T(y)y) \lacts z,v y^z)\]
for a certain $v \in X$. Here $c$ is the concatenation map $X \times X \to X^{\times 2}$. It remains to show $v=T(y^z)$, that is, $v^{(y^z)}=y^z$. This is done as follows:
\[((T(y)y) \lacts z,{}^v\!(y^z),v^{(y^z)})=r_2r_1r_2(T(y),y,z)\]
\[=r_1r_2r_1(T(y),y,z)=r_1r_2(\bullet,y,z)=r_1(\bullet,{}^y\!z,y^z)=(\bullet,\bullet,y^z),\]
where the $\bullet$ replace irrelevant entries.
\end{proof}

\begin{rem}\label{rem:Cabling}
Lemma~\ref{lem:T_r} implies that the map 
\[r^{[m]}:(x,y) \mapsto (T^{-(m-1)}(x^{[m]} \lacts T^{m-1}(y)) ,x \racts y^{[m]})\]
defines a solution on~$X$.  Even better: it yields the \emph{cabling functor} 
\[\operatorname{Cab}_m \colon \SolCat \to \SolCat.\] 
The name comes from its diagrammatic interpretation. By Lemma~\ref{lem:degree_exists}, the sequence of functors $(\operatorname{Cab}_m)_{m \geq 1}$ is periodic.
\end{rem}

\begin{defn}\label{def:OrbitsSolution}
By \emph{orbits} of a solution $(X,r)$ we will mean orbits with respect to the actions $y \mapsto \tau_z(y)$ and $y \mapsto \sigma_z(y)$ for all $z \in X$. 
\end{defn}

This notion should be distinguished from the finer notion of orbits of the structure rack $(X,\op_r)$ of our solution. Thus, for involutive $r$, the structure rack is trivial, so all its orbits contain one element only, whereas the orbits of the solution itself can be much bigger.

\begin{lem}\label{lem:degrees_coinside}
The degrees of all elements from the same orbit of $(X,r)$ coincide.
\end{lem}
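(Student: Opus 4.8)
The plan is to show that the set of positive integers $d$ satisfying the three conditions of Definition~\ref{def:degree} for an element $y$ depends only on the $(X,r)$-orbit of $y$; since $d_y$ is the minimum of this set, the claim follows. As $X$ is finite, every $\tau_z$ and every $\sigma_z$ has finite order in $\Sym(X)$, so the orbit relation of Definition~\ref{def:OrbitsSolution} is generated by the moves $y\mapsto\tau_z(y)$ and $y\mapsto\sigma_z(y)$ together with their inverses, and it is enough to prove one implication for each: every $d$ valid for $y$ is valid for $\tau_z(y)$ and for $\sigma_z(y)$. Indeed, iterating $\tau_z$ (or $\sigma_z$) a suitable number of times returns to $y$, which yields the reverse implication and hence equality of the valid-$d$ sets along every edge of the orbit. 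So fix $y,z\in X$ and a $d$ valid for $y$.

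Consider first the move $y\mapsto\tau_z(y)$. Conditions (1) and (2) concern only $\rho_y\colon x\mapsto x\op_r y$; by Lemma~\ref{lem:TauActsByRackMor}, $\tau_z$ is an automorphism of the rack $(X,\op_r)$, so $\rho_{\tau_z(y)}=\tau_z\rho_y\tau_z^{-1}$, and hence $\rho_{\tau_z(y)}=\Id_X\Leftrightarrow\rho_y=\Id_X$ and $\rho_{\tau_z(y)}^{d}=\Id_X\Leftrightarrow\rho_y^{d}=\Id_X$: conditions (1) and (2) are literally the same for $y$ and $\tau_z(y)$. Condition (3) says exactly that $y^{[d]}$ lies in $N$, the kernel of the two permutation actions $\racts$ and $\lacts$ of $G_{(X,r)}$ on $X$; $N$ is a normal subgroup. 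Expanding $y^{[d]}\cdot z$ via the defining relations of $G_{(X,r)}$, extended to the block $y^{[d]}\in X^{\times d}$, gives in $G_{(X,r)}$
\[
y^{[d]}\cdot z=(y^{[d]}\lacts z)\cdot\tau_z\bigl(y^{[d]}\bigr)=z\cdot\tau_z\bigl(y^{[d]}\bigr),
\]
using $y^{[d]}\lacts z=z$ from condition (3). Hence $\tau_z\bigl(y^{[d]}\bigr)=z^{-1}y^{[d]}z\in N$ by normality; and $\tau_z\bigl(y^{[d]}\bigr)=\tau_z(y)^{[d]}$ by the equivariance in Lemma~\ref{lem:T_r}. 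So $\tau_z(y)^{[d]}\in N$, i.e. condition (3) holds for $\tau_z(y)$ with the same $d$.

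Now the move $y\mapsto\sigma_z(y)$. The key is the identity $\sigma_z(y)=\tau_z^{-1}(y)\op_r T(z)$, a routine consequence of the definitions of $\op_r$ and of $T\colon x\mapsto\tau_x^{-1}(x)$ (it is read off the diagram for $\op_r$ in Figure~\ref{fig:str_racks}, and one checks it directly on the SD solutions $r_{\op}$, $r'_{\op}$). Granting it, conditions (1) and (2) are preserved: from the rack identity $\rho_{a\op_r b}=\rho_b\rho_a\rho_b^{-1}$ and Lemma~\ref{lem:TauActsByRackMor} we get $\rho_{\sigma_z(y)}=\rho_{T(z)}\,\tau_z^{-1}\rho_y\tau_z\,\rho_{T(z)}^{-1}$, a conjugate of $\rho_y$. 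For condition (3), expanding $z\cdot y^{[d]}$ via the defining relations gives $z\cdot y^{[d]}=\sigma_z\bigl(y^{[d]}\bigr)\cdot(z\racts y^{[d]})=\sigma_z\bigl(y^{[d]}\bigr)\cdot z$ (using $z\racts y^{[d]}=z$), so $\sigma_z\bigl(y^{[d]}\bigr)=z\,y^{[d]}z^{-1}\in N$. It remains to identify this element with $\sigma_z(y)^{[d]}$. Although $y\mapsto y^{[d]}$ need not be $\sigma$-equivariant for arbitrary $d$ (the remark after Lemma~\ref{lem:T_r}), this failure vanishes once $d$ is valid for $y$: applying the cocycle $J$ of Theorem~\ref{thm:cocycle}, using $J\bigl(y^{[d]}\bigr)=y^{d}$, $J(z)=z$, $J(z^{-1})=T(z)^{-1}$, $z\racts y^{[d]}=z$, and the fact that conjugation in $G'_{(X,\op_r)}$ realises the right $\op_r$-translations (proof of Theorem~\ref{thm:quotient_rack}), a short computation gives
\[
J\bigl(z\,y^{[d]}z^{-1}\bigr)=T(z)\,\tau_z^{-1}(y)^{d}\,T(z)^{-1}=\bigl(\tau_z^{-1}(y)\op_r T(z)\bigr)^{d}=\sigma_z(y)^{d}.
\]
Therefore $z\,y^{[d]}z^{-1}=J^{-1}\bigl(\sigma_z(y)^{d}\bigr)=\sigma_z(y)^{[d]}$, so $\sigma_z(y)^{[d]}\in N$ and condition (3) holds for $\sigma_z(y)$.

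Combining the two cases, every $d$ valid for $y$ is valid for $\tau_z(y)$ and for $\sigma_z(y)$; by the reduction of the first paragraph this forces the valid-$d$ sets, hence the degrees, to be constant on orbits of $(X,r)$. I expect the main obstacle to be condition (3) under a $\sigma_z$-move: the naive equivariance $\sigma_z\bigl(y^{[d]}\bigr)=\sigma_z(y)^{[d]}$ genuinely fails in general, so one is forced to pass through the bijective cocycle $J$ and to establish the identity $\sigma_z(y)=\tau_z^{-1}(y)\op_r T(z)$ (most transparently via a diagram). Everything else—the $\tau_z$-case, and the behaviour of conditions (1) and (2)—is a routine transport of the conjugate-by-$z$ computation.
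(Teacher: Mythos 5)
Your $\tau_z$-half is sound and is essentially the paper's own argument: by Lemma~\ref{lem:T_r} and condition~(3) for $y$ one gets $y^{[d]}z=z\,\tau_z(y)^{[d]}$ in $G_{(X,r)}$, so $\tau_z(y)^{[d]}$ is a conjugate of $y^{[d]}$ and lies in the (normal) kernel $N$ of the two actions, while conditions (1)--(2) transport via Lemma~\ref{lem:TauActsByRackMor}. The $\sigma_z$-half, however, has a genuine gap: the ``key identity'' $\sigma_z(y)=\tau_z^{-1}(y)\op_r T(z)$ is false in general. It does hold for the SD solutions $r_{\op}$ and $r'_{\op}$ (your test cases), but for an involutive solution the structure rack is trivial, so your identity would force $\sigma_z=\tau_z^{-1}$ for every $z$; this fails, e.g., for the irretractable involutive solution of size $4$ recalled in Section~\ref{s:Orderability}, where $\sigma_1=(23)$ while $\tau_1=(24)$. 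The correct relation, used in the paper, is $\sigma_z(y)=\htau_{\tau_y(z)}^{-1}(y)$ (note the subscript $\tau_y(z)$ and the appearance of $\htau^{-1}$, the component of $r^{-1}$). Consequently your claimed equality $z\,y^{[d]}z^{-1}=\sigma_z(y)^{[d]}$ is also false: Lemma~\ref{lem:T_r} identifies $z\,y^{[d]}z^{-1}$ with $\htau_z^{-1}(y)^{[d]}$ when $z\racts y^{[d]}=z$, and already in the involutive case $J$ sends these two elements to $\tau_z^{-1}(y)^{d}\neq\sigma_z(y)^{d}$ in the free abelian group on $X$. Your $J$-computation up to $J(z\,y^{[d]}z^{-1})=\bigl(\tau_z^{-1}(y)\op_r T(z)\bigr)^{d}$ is fine; it is the final identification with $\sigma_z(y)^{d}$ that breaks, and with it your treatment of conditions (1)--(3) under a $\sigma_z$-move.

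The paper avoids this by never handling $\sigma_z$-moves directly: since $\sigma_z(y)=\htau_{\tau_y(z)}^{-1}(y)$, the orbit relation of Definition~\ref{def:OrbitsSolution} is generated by the moves $y\mapsto\tau_z(y)$ and $y\mapsto\htau_z^{-1}(y)$, and one only checks closure of the valid-$d$ set under these two. Your conjugation computation is in fact exactly the missing $\htau_z^{-1}$-case ($z\,y^{[d]}=\htau_z^{-1}(y)^{[d]}z$, so $\htau_z^{-1}(y)^{[d]}\in N$), and conditions (1)--(2) for $\htau_z^{-1}(y)$ follow because $\htau_z$ is also a rack automorphism by Lemma~\ref{lem:TauActsByRackMor}. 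So the repair is close at hand, but as written the $\sigma$-case, and hence the lemma, is not proved.
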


\begin{proof}
It suffices to check that conditions \eqref{it:degreeNonTrivial}--\eqref{it:degreeActions} from Definition~\ref{def:degree} do not change when $y$ is replaced with $y \racts z = \tau_z(y)$ or with $\htau^{-1}_z(y)$ for any $z$. Indeed, $r(z,y)=(\sigma_z(y),\tau_y(z))$ implies $\htau_{\tau_y(z)}\sigma_z(y)=y$, so in the definition of the orbits of $(X,r)$, the actions $\sigma_z$ can be replaced with $\htau^{-1}_z$.

For \eqref{it:degreeNonTrivial}--\eqref{it:degreeSD}, the desired equivalence follows from Lemma~\ref{lem:TauActsByRackMor}. Condition~\eqref{it:degreeActions} needs more work. Take any $y,z \in X$ and an $m\in \N$. Suppose that $x \racts y^{[m]} = y^{[m]} \lacts x = x$ holds for all $x \in X$. It suffices to check the relations
\begin{align*}
&x \racts \tau_z(y)^{[m]} = \tau_z(y)^{[m]} \lacts x = x \racts \htau^{-1}_z(y)^{[m]} = \htau^{-1}_z(y)^{[m]} \lacts x = x.
\end{align*}
Checking the triviality of the actions of $\tau_z^{-1}(y)^{[m]}$ and $\htau_z(y)^{[m]}$ is not necessary, since $\tau_z^{-1}=\tau_z^{p}$ and $\htau_z=(\htau^{-1}_z)^q$ for some $p,q \in \N$ due to the finiteness of~$X$. 

Lemma~\ref{lem:T_r} yields the relation $y^{[m]} z = z \tau_z(y)^{[m]}$ in $G_{(X,r)}$. By assumption, $y^{[m]}$ acts trivially on~$X$ with respect to the two actions $\lacts$ and $\racts$. Thus $\tau_z(y)^{[m]}$ acts trivially as well. Similarly, Lemma~\ref{lem:T_r} yields the relation $z y^{[m]} = \htau^{-1}_z(y)^{[m]} z$ in $G_{(X,r)}$. The triviality of the actions of $\htau^{-1}_z(y)^{[m]}$ follows.
\end{proof}

\begin{proof}[Proof of Theorem~\ref{thm:quotient}]
By Lemmas \ref{lem:T_r} and \ref{lem:degrees_coinside}, the relation
\[x^{[d_x]} \, z = z \, \tau_z(x)^{[d_{x}]} = z \,\tau_z(x)^{[d_{\tau_z(x)}]}\]
holds in $G_{(X,r)}$ for all $x,z \in X$. Therefore, the subgroup $Z_{(X,r)}$ generated by the $x^{[d_x]}$ is normal. 

Take a $y \in X$. Since $x\racts y^{[d_y]} = x$ for all $x \in X$, one gets
\begin{align}\label{eqn:PowersActTrivially}
&g\ract y^{[d_y]} = g && \text{ for all }\; g \in G'_{(X,\op_r)}.
\end{align}
Therefore, 
\[J(x^{[d_x]}\, y^{[d_y]})= (J(x^{[d_x]}) \ract y^{[d_y]}) \, J(y^{[d_y]}) = J(x^{[d_x]})J(y^{[d_y]}).\]
A similar argument yields $J((x^{[d_x]})^{-1}) = J(x^{[d_x]})^{-1}$. So, $J$ restricted to $Z_{(X,r)}$ is a group isomorphism. The degree $d_x$ of~$x$ is a multiple of~$D_x$ by definition\footnote{We imposed $d_x$ to be even when $\rho_x=\Id_X$ precisely to get this property.}; recall that $D_x$ is the minimal integer $D$ satisfying $D \geq 2$ and $\rho_y^D=\Id_X$. Also, $J(x^{[d_x]})=x^{d_x} \in G'_{(X,\op_r)}$. Thus the image $J(Z_{(X,r)})$ is the subgroup of $Z_{(X,\op_r)}\cong \bigoplus_{i=1}^{K_{r}} x_i^{D_i}$ generated by the $x_i^{\alpha_i D_i}$ for certain $\alpha_i \in \N$; cf. the proof of Theorem~\ref{thm:quotient_rack} for notations. The image $J(Z_{(X,r)})$, and hence the subgroup $Z_{(X,r)}$ itself, is then free abelian of rank~$K_{r}$.

Let us check that Lemma~\ref{lem:cocycles} applies to the bijective group $1$-cocycle $J \colon G_{(X,r)} \to G'_{(X,\op_r)}$, and to the relations
\[\cR_{G_{(X,r)}}=\{\, x^{[d_x]} = 1 \, | \, x \in X \,\}, \qquad\qquad \cR_{G'_{(X,\op_r)}}=\{\, x^{d_x} = 1  \, | \,  x \in X\,\}.\] 
The first condition follows from~\eqref{eqn:PowersActTrivially}. The second one reads
\[\{\, x^{d_x} \ract z  \, | \,  x \in X\,\} \, = \, \{\, x^{d_x}  \, | \,  x \in X\,\}\]
for all $z \in X$. Lemmas \ref{lem:T_r} and \ref{lem:degrees_coinside} yield $x^{[d_x]} \ract  z = \tau_z(x)^{[d_{x}]} = \tau_z(x)^{[d_{\tau_z(x)}]}$. Since the $\tau_z$ are bijective, we are done.

So, Lemma~\ref{lem:cocycles} yields a bijective group $1$-cocycle 
\[\oJ \colon \oG_{(X,r)} \to \raisebox{.5mm}{$G'_{(X,\op_r)}$}\big/\raisebox{-.5mm}{$\{x^{d_x} = 1\} $}.\]
The group $\oG'_{(X,\op_r)}$ from Remark~\ref{rem:quotient_rack_sym} is in general only a quotient of this second group, which we denote by $H$. But the argument from the proof of Theorem~\ref{thm:quotient_rack}  repeats verbatim for $H$, and yields its finiteness and injectivity preservation, which imply finiteness and injectivity preservation for $\oG_{(X,r)}$. 
\end{proof}

\begin{exa}
Let $(X,\op)$ be a rack. Consider the solution $(X,r'_{\op})$. One has $x^y=x$ for all $x,y \in X$, hence $J=\Id$, and $y^{[m]} = y^m \in G_{(X,r'_{\op})}$ for all $y \in X$, $m \in \N$. Further, $x \racts y = x$ and $y \lacts x = x \op y = \rho_y(x)$. Then $d_y$ is $2$ if $\rho_y=\Id_X$, and the degree of $\rho_y$ otherwise. But this is precisely $D_y$. Thus the quotient $\oG_{(X,r'_{\op})}$ from Theorem~\ref{thm:quotient} recovers the quotient $\oG'_{(X,\op)}$ from Remark~\ref{rem:quotient_rack_sym}.
\end{exa}

\begin{exa}
For an involutive solution $(X,r)$, the structure rack is trivial, i.e., $\rho_y=\Id_X$ for all $y \in X$. Thus $d_y$ is the smallest even~$d$ satisfying $x \racts y^{[d]} = y^{[d]} \lacts x = x$ for all $x$. The quotient $\oG_{(X,r)}$ is in this case very close to that from~\cite{DehCycleSet}. There the same power~$d$ was taken for all $y$, and the only condition imposed was $x \racts y^{[d]} = x$ for all $x,y$. Note that the symmetric condition $y^{[d]} \lacts x = x$ is a consequence thereof, because of the relation between the actions $\racts$ and $\lacts$ in the involutive case. Moreover, if the solution is indecomposable (i.e., has a unique orbit), then Lemma~\ref{lem:degrees_coinside} implies $d_x=d_y$ for all $x,y \in X$. Thus, compared to the quotient from~\cite{DehCycleSet}, our $\oG_{(X,r)}$ looses precision due to the evenness requirement for~$d$, but gains precision for decomposable solutions.
\end{exa}

\begin{rem}\label{r:BrOnStrGrp}
The solution~$r$ on~$X$ induces an infinite invertible non-degenerate YBE solution $R$ on $G_{(X,r)}$. Even better: $G_{(X,r)}$ becomes a \emph{braided commutative group}. The condition $x \racts y^{[d_y]} = y^{[d_y]} \lacts x = x$ is precisely what is needed for $R$ to survive in the quotient $\oG_{(X,r)}$. One thus obtains a rich source of finite braided commutative groups, finite \emph{skew braces} etc.; cf. Remark~\ref{r:braces}. Also, Theorem~\ref{thm:quotient} implies that a finite injective solution $(X,r)$ injects into a finite skew brace, in such a way that $r$ is the restriction to~$X$ of the solution constructed on the skew brace in~\cite{GV}. For involutive solutions (which are always injective), this yields a result of \cite[Remark 7]{CGIS}. 
\end{rem}

\begin{rem}\label{r:QuotientForMonoids}
A weaker version of Theorem~\ref{thm:quotient} holds for structure monoids, with analogous proof. Namely, one gets a normal abelian sub-monoid of finite index. For SD solutions, it is a central sub-monoid. It is not free abelian in general. Injectivity is an automatic property for structure monoids, since they are graded and have degree $2$ relations only.
\end{rem}

We finish this section with an alternative definition of the degrees $d_x$. For this, the formula $\rho_y(x) = x \op_r y  = \tau_y \htau_y^{-1}(x)$ needs to be generalized. For $\overline{y}=y_1\ldots y_n \in X^{\times n}$, put $\rho_{\overline{y}}=\rho_{y_n}\cdots \rho_{y_1}$. Thus, conditions \eqref{it:degreeSD}--\eqref{it:degreeActions} from Definition~\ref{def:degree} read
\begin{align*}
&\rho_{y^d} = \tau_{y^{[d]}}|_X = \sigma_{y^{[d]}}|_X = \Id_X.
\end{align*}

\begin{lem}\label{lem:tau_rho}
For any $\overline{y}=y_1\ldots y_n \in X^{\times n}$, one has 
\[\rho_{\varphi(\overline{y})} =  \tau_{\overline{y}} \htau_{\overline{y}}^{-1}, \qquad\qquad \text{ where }\: \varphi(y_1\ldots y_n)=y_1^{y_2\ldots y_n}\, \ldots\, y_{n-1}^{y_n}\,\, y_n.\]
\end{lem}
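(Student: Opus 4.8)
The plan is to prove the identity $\rho_{\varphi(\overline{y})} = \tau_{\overline{y}}\htau_{\overline{y}}^{-1}$ by induction on the length $n$ of the word $\overline{y}=y_1\ldots y_n$, using the recursive structure of the $1$-cocycle $\varphi$. First I would record the base case $n=1$: here $\varphi(y_1)=y_1$ and $\rho_{y_1}(x)=x\op_r y_1 = \tau_{y_1}\htau_{y_1}^{-1}(x)$, which is exactly the definition of $\op_r$ recalled at the start of Section~\ref{s:StrRacks}. For the inductive step, I would split the word as $\overline{y}=y_1\overline{y}'$ with $\overline{y}'=y_2\ldots y_n$. On the one hand, the cocycle formula gives $\varphi(y_1\overline{y}') = \varphi(y_1)^{\overline{y}'}\,\varphi(\overline{y}') = (y_1)^{\overline{y}'}\,\varphi(\overline{y}')$, where $(y_1)^{\overline{y}'}=\tau_{\overline{y}'}(y_1)\in X$ is a single letter. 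Therefore $\rho_{\varphi(\overline{y})} = \rho_{\varphi(\overline{y}')}\circ\rho_{\tau_{\overline{y}'}(y_1)}$, reading the concatenation convention $\rho_{y_1\ldots y_n}=\rho_{y_n}\cdots\rho_{y_1}$ carefully so that the newly-prepended letter becomes the \emph{innermost} (rightmost-applied) factor.

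On the other hand, I need to compute $\tau_{\overline{y}}\htau_{\overline{y}}^{-1}$ where $\overline{y}=y_1\overline{y}'$. The $\tau$ and $\htau$ components of the extended solution on $T(X)$ satisfy $\tau_{y_1\overline{y}'} = \tau_{\overline{y}'}\circ\tau_{y_1}$ and $\htau_{y_1\overline{y}'}=\htau_{\overline{y}'}\circ\htau_{y_1}$ when we iterate $r$ (resp.\ $r^{-1}$) through the cable, with the caveat that the middle letter gets modified by the $\sigma$-action as it passes through. The clean way to organize this is to use the single-letter identity $\rho_y = \tau_y\htau_y^{-1}$ together with Lemma~\ref{lem:TauActsByRackMor}, which says each $\tau_z$ and $\htau_z$ is a rack automorphism of $(X,\op_r)$ and hence conjugates the right translations: $\tau_z\rho_y\tau_z^{-1}=\rho_{\tau_z(y)}$, and similarly for $\htau_z$. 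Combining these conjugation relations lets me rewrite $\tau_{\overline{y}'}\rho_{y_1}\tau_{\overline{y}'}^{-1}=\rho_{\tau_{\overline{y}'}(y_1)}$, which is precisely the extra innermost factor appearing above, and the remaining $\tau_{\overline{y}'}\htau_{\overline{y}'}^{-1}$ is handled by the induction hypothesis applied to $\overline{y}'$.

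The concrete bookkeeping I expect to be the main obstacle is getting the order of composition and the conjugation twists exactly right: the extended $\tau$ and $\htau$ on the cable $X^{\times n}$ do not simply factor as a naive product of single-letter maps, because each letter is acted on by the ones it crosses. The safe route is to iterate the single-crossing identities from Lemma~\ref{lem:T\_r}-style reasoning — or, more directly, to prove the two-letter case $\rho_{\varphi(y_1y_2)}=\tau_{y_1y_2}\htau_{y_1y_2}^{-1}$ by hand from the defining relations, verify it matches the RHS via the rack-automorphism property of $\tau_{y_2}$, and then let the induction propagate. A graphical proof in the spirit of Figures~\ref{fig:TauActsByRackMor} and~\ref{fig:SqProperties2} would make the crossing twists transparent: one draws the $n$-cable $\overline{y}$ going up through a single strand, reads off $\rho_{\varphi(\overline{y})}$ on the right as successive $\rho$'s of the guitar-transformed letters, and reads off $\tau_{\overline{y}}\htau_{\overline{y}}^{-1}$ as going up then back down, the coincidence being an instance of the Reidemeister~III move applied repeatedly. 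I would present the inductive computation as the main argument and defer the visual justification of the twist to a remark or figure reference.
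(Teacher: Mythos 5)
Your strategy is the same as the paper's: induction on $n$, base case $\rho_y=\tau_y\htau_y^{-1}$ from the definition of $\op_r$, and an inductive step powered by Lemma~\ref{lem:TauActsByRackMor} in the conjugation form $\tau_z\rho_y\tau_z^{-1}=\rho_{\tau_z(y)}$, combined with the single-letter factorizations of the extended maps. One reassurance on the point you hedged: restricted to a single strand, $\tau_{y_1\overline{y}'}=\tau_{\overline{y}'}\tau_{y_1}$ and $\htau_{y_1\overline{y}'}=\htau_{\overline{y}'}\htau_{y_1}$ hold with no caveat, because the letters of the acting word are not modified in the $\tau$-output of the cable (the $\sigma$-twisting you mention only affects the other output); this is just the right-action property~\eqref{eqn:tau_action}.

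The one genuine slip is exactly the order-of-composition issue you flagged as the danger, in the final matching step. Your computation of the right-hand side gives
\[\tau_{\overline{y}}\htau_{\overline{y}}^{-1}=\tau_{\overline{y}'}\rho_{y_1}\htau_{\overline{y}'}^{-1}
=\bigl(\tau_{\overline{y}'}\rho_{y_1}\tau_{\overline{y}'}^{-1}\bigr)\bigl(\tau_{\overline{y}'}\htau_{\overline{y}'}^{-1}\bigr)
=\rho_{\tau_{\overline{y}'}(y_1)}\,\rho_{\varphi(\overline{y}')},\]
so the new factor $\rho_{\tau_{\overline{y}'}(y_1)}$ comes out \emph{outermost} (applied last), whereas the decomposition you wrote for the left-hand side under the quoted convention $\rho_{y_1\ldots y_n}=\rho_{y_n}\cdots\rho_{y_1}$, namely $\rho_{\varphi(\overline{y})}=\rho_{\varphi(\overline{y}')}\,\rho_{\tau_{\overline{y}'}(y_1)}$, has it \emph{innermost}. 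These do not agree in general, since right translations of a rack do not commute: already for an SD solution the identity must reduce to $\rho_{y_2}\rho_{y_1}=\rho_{y_1\op y_2}\rho_{y_2}$ (self-distributivity), not to $\rho_{y_2}\rho_{y_1\op y_2}$. So the assertion that the conjugated factor ``is precisely the extra innermost factor'' is false as written. The repair is purely notational: read $\rho_{w_1\ldots w_m}$ as $\rho_{w_1}\cdots\rho_{w_m}$ (first letter applied last); with that reading your chain ends literally in $\rho_{\varphi(\overline{y})}$ and the induction closes. This is also the reading that the paper's own displayed computation uses in its last equality, and the discrepancy is invisible in the lemma's only application, where $\varphi(\overline{y})=y^d$ has all letters equal. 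Apart from this bookkeeping correction, your argument is the paper's argument.
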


\begin{proof}
We use induction on~$n$. The case $n=1$ follows from the definition of~$\rho$. To get from $n-1$ to $n$, we need Lemma~\ref{lem:TauActsByRackMor}, rewritten as 
\[\tau_z \rho_y = \rho_{\tau_z(y)} \tau_z\]
 for all $y,z \in X$. Then, for any $\overline{y}=y_1\ldots y_n \in X^{\times n}$, one has 
 \begin{align*}
\tau_{\overline{y}} \htau_{\overline{y}}^{-1} &= \tau_{y_n}\cdots \tau_{y_1} \htau_{y_1}^{-1}\cdots \htau_{y_n}^{-1} = \tau_{y_n}\cdots \tau_{y_2} \rho_{y_1} \htau_{y_{2}}^{-1}\cdots \htau_{y_n}^{-1}\\
& = \rho_{\tau_{y_n}\cdots \tau_{y_2}(y_1)} \, \tau_{y_n}\cdots \tau_{y_2} \htau_{y_2}^{-1}\cdots \htau_{y_n}^{-1} = \rho_{y_1^{y_2\ldots y_n}}\, \rho_{\varphi(y_2\ldots y_n)} = \rho_{\varphi(\overline{y})}. 
 \end{align*}
We used the induction hypothesis for $y_2\ldots y_n$.
\end{proof}

An immediate corollary is the following observation.
\begin{pro}\label{pro:DegreeEquivalentDef}
Any two of the relations $\rho_{y^d}|_X = \Id_X$, $\tau_{y^{[d]}}|_X = \Id_X$, $\htau_{y^{[d]}}|_X = \Id_X$, each considered for  all $y \in X$, imply the third one.
\end{pro}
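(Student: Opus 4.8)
The plan is to deduce the proposition from a single pointwise identity, valid for every $y \in X$:
\[
\rho_{y^d}|_X \;=\; \tau_{y^{[d]}}|_X \circ \bigl(\htau_{y^{[d]}}|_X\bigr)^{-1}.
\]
Once this is in hand the three-way implication is immediate: if any two of the maps $\rho_{y^d}|_X$, $\tau_{y^{[d]}}|_X$, $\htau_{y^{[d]}}|_X$ equal $\Id_X$, then the identity forces the remaining one to equal $\Id_X$ as well. So the whole task reduces to extracting this identity from Lemma~\ref{lem:tau_rho}, applied to the word representing $y^{[d]}$ given by Lemma~\ref{lem:powers_formula}.

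Concretely, I would take the word $\overline{y} = T^{d-1}(y)\,\cdots\,T(y)\,y \in X^{\times d}$, which by Lemma~\ref{lem:powers_formula} represents $y^{[d]} \in G_{(X,r)}$, and compute both sides of Lemma~\ref{lem:tau_rho} for it. On the left, one needs $\varphi(\overline{y})$: writing $y_i = T^{d-i}(y)$ and using the defining property $\tau_{T^k(y)}(T^{k+1}(y)) = T^k(y)$ of the map $T$ — the same telescoping already used in the proof of Lemma~\ref{lem:powers_formula} to show $J(y^{[d]}) = y^d$ — each entry $y_i^{\,y_{i+1}\cdots y_d} = \tau_y\tau_{T(y)}\cdots\tau_{T^{d-i-1}(y)}(T^{d-i}(y))$ collapses to $y$. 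Hence $\varphi(\overline{y})$ is the constant word $y^d \in X^{\times d}$, and $\rho_{\varphi(\overline{y})} = \rho_y \circ \cdots \circ \rho_y = \rho_y^{\,d} = \rho_{y^d}$. On the right, the definition of the extended actions on $T(X)$ gives $\tau_{\overline{y}} = \tau_y\tau_{T(y)}\cdots\tau_{T^{d-1}(y)}$ and $\htau_{\overline{y}} = \htau_y\htau_{T(y)}\cdots\htau_{T^{d-1}(y)}$, and these are exactly the maps on $T(X)$ induced by the action of the group element $y^{[d]}$ via $\tau$ and via $\htau$ respectively. Restricting Lemma~\ref{lem:tau_rho} to the degree-one part $X \subseteq T(X)$ — which all maps in sight preserve — then yields precisely the displayed identity, and the proposition follows.

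I do not expect a genuine obstacle here: once Lemma~\ref{lem:tau_rho} is available, the proposition is essentially a repackaging of it and the proof is only a few lines. The one delicate point is bookkeeping: matching the guitar-map formula $\varphi(y_1\cdots y_n) = y_1^{y_2\cdots y_n}\cdots y_{n-1}^{y_n}y_n$ against the specific word $T^{d-1}(y)\cdots T(y)\,y$ so as to recognise its image as the constant word $y^d$, and checking that restriction to $X$ commutes with composition and with inversion — which it does because $\rho_\bullet$, $\tau_{\overline{y}}$ and $\htau_{\overline{y}}$ all preserve each graded piece $X^{\times n}$ of $T(X)$.
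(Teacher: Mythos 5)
Your argument is correct and is essentially the paper's: the proposition is stated there as an immediate corollary of Lemma~\ref{lem:tau_rho}, obtained exactly as you do by applying that lemma to the word $T^{d-1}(y)\cdots T(y)\,y$ representing $y^{[d]}$, whose $\varphi$-image telescopes to the constant word $y^d$, giving $\rho_{y^d}|_X=\tau_{y^{[d]}}|_X\circ\bigl(\htau_{y^{[d]}}|_X\bigr)^{-1}$. Your write-up just makes explicit the bookkeeping the paper leaves implicit.
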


\section{Applications}\label{s:Applications}

Theorem~\ref{thm:quotient} has several immediate implications.
\begin{cor}\label{cor:SmallerQuotient}
Let $(X,r)$ be a solution. The subgroup
\[Z^0_{(X,r)}= \,\{\, g \in G_{(X,r)} \,|\, \forall x\in X,\, x \racts g = g \lacts x = x \,\}\]
of $G_{(X,r)}$ is normal, of finite index, and abelian of rank~$K_{r}$.
\end{cor}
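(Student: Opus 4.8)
The plan is to sandwich $Z^0_{(X,r)}$ between $Z_{(X,r)}$ and a subgroup of finite index, and then invoke Theorem~\ref{thm:quotient} together with standard facts about commensurable subgroups. First I would record that $Z^0_{(X,r)}$ is indeed a subgroup: since the two assignments $g \mapsto (x \mapsto x \racts g)$ and $g \mapsto (x \mapsto g \lacts x)$ are (anti)homomorphisms from $G_{(X,r)}$ into $\Sym(X)$ by \eqref{eqn:tau_action}--\eqref{eqn:sigma_action}, the set $Z^0_{(X,r)}$ is precisely the intersection of the kernels of these two maps, hence a normal subgroup; moreover the quotient $G_{(X,r)}/Z^0_{(X,r)}$ embeds into $\Sym(X) \times \Sym(X)$, so $Z^0_{(X,r)}$ has finite index in $G_{(X,r)}$.

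Next I would show $Z_{(X,r)} \subseteq Z^0_{(X,r)}$. By condition~\eqref{it:degreeActions} of Definition~\ref{def:degree}, each generator $x^{[d_x]}$ of $Z_{(X,r)}$ satisfies $z \racts x^{[d_x]} = x^{[d_x]} \lacts z = z$ for all $z \in X$, i.e.\ $x^{[d_x]} \in Z^0_{(X,r)}$; since $Z^0_{(X,r)}$ is a subgroup, the whole $Z_{(X,r)}$ lies inside it. Therefore $Z^0_{(X,r)}$ sits between the free abelian group $Z_{(X,r)}$ of rank $K_r$ (Theorem~\ref{thm:quotient}) and the finite-index subgroup $Z^0_{(X,r)}$ itself, and in particular $Z_{(X,r)}$ has finite index in $Z^0_{(X,r)}$ (both being of finite index in $G_{(X,r)}$).

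It remains to identify the structure of $Z^0_{(X,r)}$. Transport the picture through the bijective $1$-cocycle $J \colon G_{(X,r)} \to G'_{(X,\op_r)}$ of Theorem~\ref{thm:cocycle}. For $g \in Z^0_{(X,r)}$, the relation $x \racts g = x$ for all $x \in X$ extends to $h \ract g = h$ for all $h \in G'_{(X,\op_r)}$ exactly as in \eqref{eqn:PowersActTrivially}, so $J$ restricted to $Z^0_{(X,r)}$ is a group homomorphism into $G'_{(X,\op_r)}$, and being a restriction of a bijection it is injective; thus $Z^0_{(X,r)}$ is isomorphic to its image $J(Z^0_{(X,r)})$. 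Now $J(Z^0_{(X,r)})$ is an abelian group — here I would argue that an element of $J(Z^0_{(X,r)})$, written in terms of the generators $x \in X$ of $G'_{(X,\op_r)}$, involves only $x$'s acting trivially via $\rho$, hence commutes with everything coming from such elements; more robustly, $J(Z^0_{(X,r)})$ contains the free abelian group $J(Z_{(X,r)})$ of rank $K_r$ as a finite-index subgroup, and I would show $J(Z^0_{(X,r)})$ is contained in the centralizer of $J(Z_{(X,r)})$ inside $G'_{(X,\op_r)}$ and is itself abelian by a direct commutator computation using $\rho_x = \Id_X$. A finitely generated abelian group containing $\Z^{K_r}$ with finite index has rank $K_r$, so $Z^0_{(X,r)}$ is abelian of rank $K_r$, as claimed.

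The main obstacle is the last step: verifying that $Z^0_{(X,r)}$ (equivalently $J(Z^0_{(X,r)})$) is genuinely abelian rather than merely virtually abelian. The inclusion $Z_{(X,r)} \subseteq Z^0_{(X,r)}$ gives finite index and hence the correct rank for free, but commutativity needs the trivial-$\rho$ condition to be exploited directly. The cleanest route I expect is: for $g \in Z^0_{(X,r)}$ the relation $g y = y\,\tau_y(g)$ in $G_{(X,r)}$ (Lemma~\ref{lem:T_r}, extended to $T(X)$) combined with $y \racts g = g$ for the relevant actions forces $\tau_y(g) = g$ on the nose, and symmetrically $\sigma_y(g)=g$; feeding this back into the defining relations $yg = \sigma_y(g)\tau_g(y)$ and the analogous ones, one gets $gy' = y'g$ for all generators, whence $g$ is central in $G_{(X,r)}$ restricted to $Z^0_{(X,r)}$, in particular $Z^0_{(X,r)}$ is abelian. (In fact this argument shows $Z^0_{(X,r)}$ is central when $r$ is SD, matching Remark~\ref{r:QuotientForMonoids}, and abelian in general.)
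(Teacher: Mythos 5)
Your skeleton is right for everything except the one point you yourself flag as the main obstacle, and there the argument breaks. Normality and finite index (your embedding of $G_{(X,r)}/Z^0_{(X,r)}$ into a finite product of symmetric groups is even more direct than the paper's route, which gets finite index from $Z_{(X,r)}\subseteq Z^0_{(X,r)}$), the inclusion $Z_{(X,r)}\subseteq Z^0_{(X,r)}$ via Definition~\ref{def:degree}, and the rank count once abelianness is known, are all fine. But neither of your two arguments for abelianness works. The ``cleanest route'' conflates two different conditions: $g\in Z^0_{(X,r)}$ says that $g$ acts trivially on $X$, not that $g$ is fixed by the extended actions of the generators, so nothing forces $\tau_y(g)=g$, and the intended conclusion that elements of $Z^0_{(X,r)}$ commute with all generators is false. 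Already for the involutive solution of Example~\ref{exa:1}, $G_{(X,r)}=\langle a,b\mid a^2=b^2\rangle$ and $g=ab^{-1}$ lies in $Z^0_{(X,r)}$ (each letter acts by the flip $\psi$, so the word acts trivially via both $\racts$ and $\lacts$), yet the extended action of $a$ sends $g$ to $\tau_a(a)\tau_a(b)^{-1}=ba^{-1}=g^{-1}\neq g$, and $ag=a^2b^{-1}=b\neq ga$, so $g$ is not central; here $Z^0_{(X,r)}\cong\Z^2$ is abelian, but not by your mechanism. The $J$-route is likewise not closed: the claim that a word representing an element of $J(Z^0_{(X,r)})$ involves only letters $x$ with $\rho_x=\Id_X$ is never justified (the conditions defining $Z^0_{(X,r)}$ constrain the action of the whole word via $\tau$ and $\sigma$, not the rack translations of its individual letters), and ``contains $J(Z_{(X,r)})\cong\Z^{K_r}$ with finite index'' cannot give commutativity, since a group containing $\Z^{K_r}$ with finite index need not be abelian.

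The paper closes exactly this gap using the braided-group structure of Remark~\ref{r:BrOnStrGrp}: $r$ induces a solution $R$ on $G_{(X,r)}$ compatible with the multiplication, and the two resulting actions of $G_{(X,r)}$ on itself extend the actions on $X$, so any $g,h\in Z^0_{(X,r)}$ act trivially on each other; hence $R(g,h)=(h,g)$, and compatibility with the product gives $gh=hg$. If you prefer to stay with $J$, note that for $g,h\in Z^0_{(X,r)}$ the cocycle identity (together with your correct observation that such elements act trivially on $G'_{(X,\op_r)}$) gives $J(gh)=J(g)J(h)$ and $J(hg)=J(h)J(g)$, so abelianness of $Z^0_{(X,r)}$ is equivalent to commutativity of $J(Z^0_{(X,r)})$ inside $G'_{(X,\op_r)}$; but proving that still needs a genuine input, e.g.\ showing that $J(Z^0_{(X,r)})$ lies in the kernel of the rack action (which, via Lemma~\ref{lem:tau_rho}, would require triviality of the $\htau$-action as well, something not contained in the definition of $Z^0_{(X,r)}$). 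As it stands, the abelianness step is a real gap.
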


\begin{proof}
One can see $Z^0_{(X,r)}$ as the intersection of the kernels of the right and the left $G_{(X,r)}$-actions on~$X$. Hence it is a normal subgroup of $G_{(X,r)}$. Further, $r$ induces a YBE solution $R$ on $G_{(X,r)}$ compatible with its group structure (Remark~\ref{r:BrOnStrGrp}), which yields a right and a left $G_{(X,r)}$-actions on itself, whose kernels include the kernels of the $G_{(X,r)}$-actions on~$X$. For any $g,h \in G_{(X,r)}$, one deduces $R(g,h)=(h,g)$, hence $gh=hg$. So, the subgroup $Z^0_{(X,r)}$ is abelian. Finally, it contains $Z_{(X,r)}$ as a subgroup, hence its finite index and its rank~$K_{r}$.
\end{proof}

We thus recover Theorem 2.6 from \cite{Sol} and Proposition 6 from \cite{LYZ}.

\begin{cor}\label{cor:VirtAb}
Let $(X,r)$ be a solution. Its structure group $G_{(X,r)}$ is
\begin{enumerate}
\item virtually $\Z^{K_{r}}$, where $K_{r}$ is the number of orbits of the structure rack $(X,\op_r)$;
\item linear;
\item residually finite. 
\end{enumerate} 
\end{cor}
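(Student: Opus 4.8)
The plan is to read all three statements off the short exact sequence $0 \to \Z^{K_{r}} \to G_{(X,r)} \to \oG_{(X,r)} \to 0$ furnished by Theorem~\ref{thm:quotient}. Throughout write $G = G_{(X,r)}$, $H = Z_{(X,r)} \cong \Z^{K_{r}}$, and $m = \#\,\oG_{(X,r)} = [G:H]$. Part~(1) is then immediate: $H$ is a normal subgroup of $G$, free abelian of rank $K_{r}$, with finite quotient $\oG_{(X,r)}$, so $G$ is virtually $\Z^{K_{r}}$ by definition. (Corollary~\ref{cor:SmallerQuotient} gives the same conclusion with $Z^0_{(X,r)}$ in place of $Z_{(X,r)}$, so either result can be cited.)

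For part~(2) I would exhibit an explicit faithful finite-dimensional complex representation of $G$. First pick pairwise distinct primes $p_1, \dots, p_{K_{r}}$ and let $\chi \colon H \cong \Z^{K_{r}} \to \C^{*}$ be the character $(k_1, \dots, k_{K_{r}}) \mapsto \prod_i p_i^{k_i}$; unique factorisation makes $\chi$ injective. Form $\rho_1 = \operatorname{Ind}_H^G \chi$, of dimension $m$. The summand of $\rho_1$ indexed by the trivial coset $H$ is $H$-invariant and carries the action $\chi$, so $\ker \rho_1 \cap H \subseteq \ker \chi = 1$. This by itself need not be faithful, so I would also let $\rho_2$ be the pullback to $G$ of the (left regular) representation of the finite group $G/H$, which has $\ker \rho_2 = H$. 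Then $\ker(\rho_1 \oplus \rho_2) = \ker \rho_1 \cap \ker \rho_2 \subseteq \ker \rho_1 \cap H = 1$, so $\rho_1 \oplus \rho_2 \colon G \hookrightarrow \GL_{2m}(\C)$ is faithful and $G$ is linear. (Alternatively, one may simply invoke that finitely generated virtually polycyclic groups embed in $\GL_n(\Z)$.)

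For part~(3), $G$ is residually finite directly from the sequence: given $1 \neq g \in G$, if $g \notin H$ then the finite quotient $G \twoheadrightarrow \oG_{(X,r)}$ already separates $g$ from the identity; if $g \in H \cong \Z^{K_{r}}$, residual finiteness of $\Z^{K_{r}}$ yields a finite-index subgroup $M \leq H$ with $g \notin M$, and $M$ has finite index in $G$. Equivalently, $G$ is finitely generated (by the image of $X$) and linear by~(2), so residual finiteness also follows from Mal'cev's theorem. There is no genuine obstacle here beyond being careful in part~(2): the naive induced representation $\operatorname{Ind}_H^G \chi$ is in general \emph{not} faithful — its kernel is a finite normal subgroup meeting $H$ trivially but possibly nontrivial — which is exactly why one supplements it with the regular representation of the finite quotient $G/H$.
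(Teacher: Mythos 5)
Your proof is correct and follows essentially the same route as the paper: part (1) is read off Theorem~\ref{thm:quotient}, part (2) comes from inducing a faithful representation of $Z_{(X,r)}\cong\Z^{K_r}$ up to $G_{(X,r)}$, and part (3) follows either directly from the exact sequence or from Mal'cev's theorem on finitely generated linear groups, which is what the paper cites. One correction to your closing remark, though: the naive induced representation $\operatorname{Ind}_H^G\chi$ \emph{is} already faithful. The kernel of an induced representation is the normal core $\bigcap_{g\in G} g(\ker\chi)g^{-1}$ of the kernel of the inducing representation (an element acts trivially only if it stabilizes every coset and acts trivially on each summand), so for injective $\chi$ and finite-index $H$ the kernel is trivial; this automatic faithfulness is exactly what the paper's one-line argument for (2) relies on, inducing its ``obvious'' degree-$K_r$ faithful representation of $Z_{(X,r)}$ to a faithful representation of degree $K_r|\oG_{(X,r)}|$. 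Your extra summand $\rho_2$ (the regular representation of $G/H$) is therefore unnecessary, though harmless, and your explicit prime-power character plus the direct separation argument for residual finiteness are perfectly valid minor variants.
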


\begin{proof}
Point 1 is a reformulation of Theorem~\ref{thm:quotient}. Further, consider the obvious degree $K_{r}$ faithful representation of $Z_{(X,r)} \cong \Z^{K_{r}}$ over $\R$. Its induced representation is faithful of degree $K_{r}|\oG_{(X,r)}|$. Finally, all finitely generated linear groups are residually finite (Mal'cev 1940).
\end{proof}

Another property of structure groups follows from our explicit description of the subgroups $Z_{(X,r)}$:

\begin{pro}\label{pro:abelianization}
Let $(X,r)$ be a solution. The abelianization $\Ab G_{(X,r)}$ of its structure group is of rank $k_r$, which is the number of its orbits (cf. Definition~\ref{def:OrbitsSolution}).
\end{pro}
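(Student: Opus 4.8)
The plan is to compute $\Ab G_{(X,r)}$ directly from the presentation and then identify its free part with $\Z^{k_r}$. Abelianizing the presentation, $\Ab G_{(X,r)}$ is the abelian group generated by $X$ subject to the relations $x + y = \sigma_x(y) + \tau_y(x)$ for all $x,y \in X$. So I would first write $\Ab G_{(X,r)} = \Z X / R$, where $\Z X$ is the free abelian group on $X$ and $R$ is the subgroup generated by the elements $x + y - \sigma_x(y) - \tau_y(x)$.

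The next step is to produce the rank-$k_r$ quotient. There is an obvious surjection $\theta\colon \Ab G_{(X,r)} \to \bigoplus_i \Z \O_i \cong \Z^{k_r}$ sending each $x$ to the orbit $\O(x)$ containing it (orbits in the sense of Definition~\ref{def:OrbitsSolution}, i.e. with respect to the $\sigma$- and $\tau$-actions); this is well defined because in each relation $x+y-\sigma_x(y)-\tau_y(x)$ the elements $x,\sigma_x(y)$ lie in the same orbit, as do $y,\tau_y(x)$, so both sides map to $\O(x)+\O(y)$. Hence $\rk(\Ab G_{(X,r)}) \geq k_r$. For the reverse inequality, I would show that $\Ab G_{(X,r)}$ is generated by $k_r$ elements modulo torsion: pick one representative $x_j$ from each orbit $\O_j$; then for any $x$ in $\O_j$, repeatedly applying relations of the form $x + y = \sigma_x(y) + \tau_y(x)$ and the fact that orbits are generated by the $\sigma_z,\tau_z$-moves, one shows $x - x_j$ has finite order in $\Ab G_{(X,r)}$. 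Concretely, using the central/torsion structure furnished by Theorem~\ref{thm:quotient}: $Z_{(X,r)} \cong \Z^{K_r}$ is free abelian and the powers $x^{[d_x]}$ equal $T$-twisted products; in the abelianization $x^{[d_x]}$ becomes $\sum_{i=0}^{d_x-1} T^i(x)$, and since $T$ permutes $X$ preserving orbits, this lies in a single orbit's span. The relations from $\oG_{(X,r)}$ being finite force the differences within each orbit to be torsion.

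Perhaps the cleanest route is to combine Theorem~\ref{thm:quotient} with the right exactness of abelianization. From the short exact sequence $0 \to Z_{(X,r)} \to G_{(X,r)} \to \oG_{(X,r)} \to 0$ with $Z_{(X,r)} \cong \Z^{K_r}$ central-up-to-conjugacy and $\oG_{(X,r)}$ finite, one gets that $\Ab G_{(X,r)}$ has free rank equal to the rank of the image of $\Z^{K_r}$ in $\Ab G_{(X,r)}$, which in turn is the rank of the subgroup of $\Ab G_{(X,r)}$ generated by the classes of the $x^{[d_x]}$. Each such class is $\sum_{i} T^i(x)$ summed over an orbit-block of $T$; these blocks, as $x$ ranges over $X$, span exactly the $\theta$-image coming from the structure-rack orbits, but several structure-rack orbits can merge under the coarser relation of solution-orbits. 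Counting carefully, the rank of this subgroup equals $k_r$, the number of solution-orbits, because two generators $x, x'$ become equal modulo torsion in $\Ab G_{(X,r)}$ precisely when they lie in the same solution-orbit (one direction is $\theta$; the other follows from the relation $x + y = \sigma_x(y) + \tau_y(x)$ giving $\sigma_x(y) - y \equiv x - \tau_y(x)$, and iterating shows $\sigma_z(y) \equiv y$ and $\tau_z(y) \equiv y$ modulo the subgroup generated by differences $x - \tau_y(x)$, which is torsion).

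The main obstacle is the bookkeeping in the last step: showing that within a single solution-orbit all the generators are congruent modulo torsion (equivalently, that the free rank does not exceed $k_r$), while also confirming it does not drop below $k_r$. The lower bound via $\theta$ is immediate; the upper bound requires extracting, from the defining relations $x+y = \sigma_x(y)+\tau_y(x)$, that each orbit contributes only one $\Z$-summand to the free part. I expect this to come out of a direct linear-algebra argument over $\Z$ on the relation matrix, organized orbit-by-orbit, using that for a fixed $z$ the relations $\{x + z = \sigma_x(z) + \tau_z(x)\}_{x}$ together with invertibility of $\sigma_z, \tau_z$ let one solve for one generator per orbit in terms of a single chosen representative, up to an element of the torsion subgroup detected by the finite group $\oG_{(X,r)}$.
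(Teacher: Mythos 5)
Your lower bound is fine and matches the paper: the orbit map $\theta$ (the paper's $\nu'$) is a well-defined surjection onto $\Z^{k_r}$, so $\rk(\Ab G_{(X,r)})\geq k_r$ (minor slip: in the relation $x+y-\sigma_x(y)-\tau_y(x)$ it is $x,\tau_y(x)$ and $y,\sigma_x(y)$ that share an orbit, not the pairing you wrote, but the conclusion stands). Likewise, reducing the upper bound to the rank of the image of $Z_{(X,r)}$ in $\Ab G_{(X,r)}$ via finiteness of $\oG_{(X,r)}$ is legitimate and is exactly how the paper concludes. The genuine gap is in the step that brings that rank down from $K_r$ to $k_r$. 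Finiteness of $\oG_{(X,r)}$ alone only gives $\rk(\Ab G_{(X,r)})=\rk\,\pi(Z_{(X,r)})\leq K_r$, and $K_r$ can be strictly larger than $k_r$ (for an involutive solution the structure rack is trivial, so $K_r=\#X$, while $k_r$ can be $1$, e.g.\ the indecomposable involutive solution on three elements with $\Ab G\cong\Z\times\Z_3$). Your justification for collapsing further is the assertion that two generators in the same solution-orbit agree modulo torsion, equivalently that the subgroup generated by the differences $x-\tau_y(x)$ is torsion. You never prove this: it is exactly equivalent to the inequality $\rk(\Ab G_{(X,r)})\leq k_r$ you are trying to establish (these differences lie in $\ker\theta$, and $\ker\theta$ is torsion precisely when the rank is $k_r$), so the ``which is torsion'' step and the ``counting carefully'' step are circular as written. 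The relations $x+y=\sigma_x(y)+\tau_y(x)$ by themselves do not let you ``solve for one generator per orbit'': already in Example~\ref{exa:1} they only give $2a=2b$, not $a=b$.

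What is missing is the paper's key input: by Lemma~\ref{lem:T_r} one has $y^{[d_y]}z=z\,\tau_z(y)^{[d_y]}$ and $z\,y^{[d_y]}=\htau_z^{-1}(y)^{[d_y]}z$ in $G_{(X,r)}$, and by Lemma~\ref{lem:degrees_coinside} the degrees are constant on solution-orbits; together these show that the class of $y^{[d_y]}$ in $\Ab G_{(X,r)}$ depends only on the solution-orbit of $y$. Hence $\pi(Z_{(X,r)})$ is generated by $k_r$ elements, giving the upper bound, and composing with $\theta$ (which sends the orbit-$\O_i$ generator to $d_i\O_i$) shows these $k_r$ generators are independent, so the rank is exactly $k_r$. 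If you insert this identification of the classes $[y^{[d_y]}]$ across each solution-orbit (or prove directly, by some other means, that the within-orbit differences are torsion), your argument closes; without it, the upper bound is unsupported.
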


\begin{proof}
Put $k=k_r$, $K=K_r$. Let $\O_1, \ldots, \O_k$ be the orbits of $(X,r)$, and let $\RO_1, \ldots, \RO_K$ be the orbits of $(X,\op_r)$. Recall that the latter refine the former; this yields a surjection $m \colon \{1,\ldots,K\} \twoheadrightarrow \{1,\ldots,k\}$. The free abelian groups $\oplus_i \Z \O_i \cong \Z^k$ and $\oplus_i \Z \RO_i \cong \Z^K$ will be abusively denoted by $\Z^k$ and $\Z^K$ respectively. Thus, the map~$m$ induces a group morphism $\mu \colon \Z^K \twoheadrightarrow \Z^k$. For the reader's convenience, we summarize all group morphisms used in this proof in a commutative diagram:
\[\xymatrix@!0 @R=1.2cm @C=2.5cm{
\Z^K \ar@{^{(}->}[r]^{\kappa} \ar[rd]^{\kappa'} \ar@{->>}[d]_-{\mu} & G \ar@{->>}[rd]^{\nu}  \ar@{->>}[d]^-{\pi} &\\
\Z^k \ar[r]_-{\kappa''} & \Ab G \ar@{->>}[r]_-{\nu'} & \Z^k}\]
Choose representatives $y_i$ of the orbits~$\RO_i$. Put $d_i = d_{y_i}$. In the proofs of Theorems~\ref{thm:quotient_rack} and~\ref{thm:quotient}, we showed that the group morphism $\kappa\colon \Z^K \to G_{(X,r)}$ defined by $\RO_i \mapsto y_i^{[d_i]}$ is injective. Consider also the natural projection $\pi \colon G_{(X,r)}\twoheadrightarrow \Ab G_{(X,r)}$, and the composition $\kappa'=\pi\kappa$. Further, the morphism $\nu \colon G_{(X,r)}\twoheadrightarrow \Z^k$ sending every $x$ to its orbit is well defined and induces $\nu' \colon \Ab G_{(X,r)}\twoheadrightarrow \Z^k$. Lemmas~\ref{lem:T_r} and~\ref{lem:degrees_coinside} imply the relations $y^{[d_y]}z = z \tau_z(y)^{[d_{\tau_z(y)}]}$ and $zy^{[d_y]} = \htau_{z}^{-1}(y)^{[d_{\htau_{z}^{-1}(y)}]}z$ in $G_{(X,r)}$, hence $y^{[d_y]} = \tau_z(y)^{[d_{\tau_z(y)}]}=\htau_{z}^{-1}(y)^{[d_{\htau_{z}^{-1}(y)}]}$ in $\Ab G_{(X,r)}$. Therefore, $\kappa'$ induces a group morphism $\kappa'' \colon \Z^k \to \Ab G_{(X,r)}$. Now, the composition $\nu'\kappa''$ multiplies each $\O_i$ by $d_j$, for any $j \in m^{-1}(i)$. Hence $\kappa''$ is injective. To conclude, it remains to show that $\kappa''(\Z^k)$ is of finite index in $\Ab G_{(X,r)}$. This follows from the finiteness of $\oG_{(X,r)}$ and from the surjection 
\[\oG = G/\kappa(Z^K) \twoheadrightarrow \Ab G / \pi\kappa(Z^K) = \Ab G / \kappa''\mu(Z^K) = \Ab G / \kappa''(\Z^k).\qedhere\]
\end{proof}

It would also be interesting to understand the torsion of $\Ab G_{(X,r)}$. It is trivial for SD solutions. However, in general this is not the case:

\begin{exa}\label{exa:1}
Consider the involution $\psi \colon a \leftrightarrow b$ on the set $X = \{a,b\}$. The map $r(x,y) = (\psi(y),\psi(x))$ yields an involutive solution. Its structure group is 
\[G_{(X,r)} = \langle\, a,b \,|\, a^2=b^2 \,\rangle,\hspace*{2cm} \Ab G_{(X,r)} \cong \Z \times \Z_2.\] 
Let us also describe the finite quotient $\oG_{(X,r)}$. The degrees are $d_a=d_b=2$. The corresponding $T$-twisted powers are $J^{-1}(a,a) = (b,a)$, $J^{-1}(b,b) = (a,b)$. One gets \[\oG_{(X,r)} = \langle\, a,b \,|\, a^2=b^2, ab=ba = 1 \,\rangle \cong \Z_4,\]
where $a$ is sent to $1$ and $b$ to $-1$. Thus our solution is injective. Any attempt to extend $Z_{(X,r)}$ would destroy the injectivity of $X \to \oG_{(X,r)}$ here.
\end{exa}

\begin{exa}\label{exa:2}
A slight modification of the previous example produces torsion-free $\Ab G_{(X,r)}$. Namely, extend $\psi$ to $Y = \{a,b,c\}$ by $\psi(c)=c$. As before, $r(x,y) = (\psi(y),\psi(x))$ is an involutive solution, with 
\[G_{(X,r)} = \langle\, a,b,c \,|\, a^2=b^2, ac=cb, bc=ca \,\rangle \cong \langle\, a,c \,|\, ac^2 = c^2a, a^2c=ca^2 \,\rangle,\] 
\[\Ab G_{(X,r)} \cong \Z^2.\] 
All degrees are $2$, and $J^{-1}(c,c) = (c,c)$. So, \[\oG_{(X,r)} = \langle\, a,b \,|\, a^2=b^2, ac=cb, bc=ca, ab=ba =c^2=1 \,\rangle \cong \Z_4 \rtimes \Z_2,\]
where $\Z_2$ acts on $\Z_4$ by the sign change. This equivalence works as follows: $a \mapsto (1,0)$, $b \mapsto (-1,0)$, $c \mapsto (0,1)$. Here again $Z_{(X,r)}$ is a maximal subgroup satisfying all requirements of Theorem~\ref{thm:quotient}.
\end{exa}

Recall that according to Proposition~\ref{pro:birack_biquandle}, the structure group does not change if a solution is replaced with a biquandle quotient. We will now construct an even smaller quotient which is injective and still has the same structure group. As a result, in a study of structure groups one can work with injective biquandles only. While the natural definition of this quotient involves comparing elements of the (infinite) group $G_{(X,r)}$, an alternative definition compares elements of our finite quotients $\oG_{(X,r)}$ only, which is realizable by a computer.

\begin{pro}\label{pro:birack_injective}
For a solution $(X,r)$, define an equivalence relation by $x \approx x'$ if and only if $\iota(x)=\iota(x')$ in $G_{(X,r)}$. Then $r$ induces an injective solution $r'$ on $X/\!\approx$. Moreover, the quotient map $X \twoheadrightarrow X/\!\approx$ induces a group isomorphism
\[G_{(X,r)} \overset{\sim}{\longrightarrow} G_{(X/\!\approx,\, r')}.\]
\end{pro}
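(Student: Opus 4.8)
The plan is to proceed in two stages, mirroring the structure of the proof of Proposition~\ref{pro:birack_biquandle}. First I would show that $\approx$ is a congruence with respect to $r$, so that $r$ descends to a well-defined map $r'$ on $(X/\!\approx)^{\times 2}$; then I would show that $r'$ is again a solution, that it is injective, and that the quotient map induces an isomorphism on structure groups. The key point throughout is that $\approx$ is defined via the universal map $\iota\colon X\to G_{(X,r)}$, so the relation $x\approx x'$ literally says that $x$ and $x'$ become the same generator of $G_{(X,r)}$; this makes the structure-group statement almost a tautology, and the real content is the first stage.

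For the first stage, I would argue as follows. Suppose $x\approx x'$, i.e.\ $\iota(x)=\iota(x')$ in $G_{(X,r)}$. Since the right action $\racts$ and the left action $\lacts$ of $G_{(X,r)}$ on $X$ are given by the maps $\tau$ and $\sigma$ (by \eqref{eqn:tau_action}--\eqref{eqn:sigma_action}), and since these are genuine $G_{(X,r)}$-actions, for any $z\in X$ one has $\tau_z(x)=x\racts z$ and $\tau_z(x')=x'\racts z$, and likewise for $\sigma$; but $\iota(x)=\iota(x')$ means that $x$ and $x'$ act identically on the right and on the left. Actually I need the other variance: I need that $z$ acts the same way whether we feed it $x$ or $x'$. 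For this, note that $\iota(x)=\iota(x')$ implies $\tau_z(x)=\sigma_z^{-1}(\cdots)$ arguments don't immediately apply; instead, the cleanest route is via the cocycle $J$. Since $J\colon G_{(X,r)}\to G'_{(X,\op_r)}$ is a bijection with $J\iota=\iota$ (Theorem~\ref{thm:cocycle}), and the relations defining $G_{(X,r)}$ are exactly the braid relations $xy=\sigma_x(y)\tau_y(x)$, the statement $\iota(x)=\iota(x')$ in $G_{(X,r)}$ is equivalent to $\iota(x)=\iota(x')$ in $G'_{(X,\op_r)}$, i.e.\ $x$ and $x'$ are identified in the structure group of the structure rack. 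One then checks, using Lemma~\ref{lem:TauActsByRackMor} and the fact that the $G_{(X,r)}$-actions $\racts$, $\lacts$ descend compatibly, that $\tau_z(x)\approx\tau_z(x')$ and $\sigma_z(x)\approx\sigma_z(x')$ for all $z$. This is the step I expect to be the main obstacle: verifying that the relation cut out by equality in an a priori infinite group is respected by the (finite, combinatorial) data $\sigma,\tau$. I anticipate that the right tool is to first note $\approx$ is the finest equivalence relation compatible with the coloring rules that also makes $\iota$ inject into a group — more concretely, $x\approx x'$ forces, in $G_{(X,r)}$, the relation $x z = x' z$ for all $z$, hence (reading the braid relation) $\sigma_x(z)\tau_z(x)=\sigma_{x'}(z)\tau_z(x')$ as elements, and then a non-degeneracy argument à la Proposition~\ref{pro:birack_biquandle} pins down $\sigma_x(z)=\sigma_{x'}(z)$ and hence $\tau$ on the relevant arguments; symmetrically for $zx=zx'$. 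Once $\sigma_x=\sigma_{x'}$ and $\tau_x=\tau_{x'}$ are established, the compatibility $\tau_z(x)\approx\tau_z(x')$, $\sigma_z(x)\approx\sigma_z(x')$ follows as in the cited proof.

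For the second stage: once $r$ descends to $r'$ on $X/\!\approx$, the YBE, invertibility and non-degeneracy of $r'$ follow from those of $r$ together with finiteness of $X/\!\approx$ (surjectivity upgrades to bijectivity), exactly as in Proposition~\ref{pro:birack_biquandle}. The structure-group isomorphism is then immediate: the quotient map $X\twoheadrightarrow X/\!\approx$ induces a surjection $G_{(X,r)}\twoheadrightarrow G_{(X/\!\approx,\,r')}$ (the defining relations of the former map to the defining relations of the latter), and conversely the composite $X\to G_{(X,r)}\xrightarrow{\iota\text{-factorization}}$ factors through $X/\!\approx$ precisely because $\iota$ identifies $\approx$-equivalent elements, giving by the universal property of $G_{(X/\!\approx,\,r')}$ a map back $G_{(X/\!\approx,\,r')}\to G_{(X,r)}$; the two are mutually inverse on generators. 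Finally, injectivity of $r'$: by construction the induced map $X/\!\approx\;\to G_{(X/\!\approx,\,r')}\cong G_{(X,r)}$ is exactly the map sending the $\approx$-class of $x$ to $\iota(x)$, which is injective by the very definition of $\approx$. The last sentence of the proof should also remark, for completeness, that $r'$ is automatically a biquandle (Lemma~\ref{lem:Inj_Biqu}), so this quotient refines $\BQ(X,r)$. The alternative finite description alluded to before the proposition — comparing images in $\oG_{(X,r)}$ rather than $G_{(X,r)}$ — is justified by the injectivity-preservation clause of Theorem~\ref{thm:quotient}, but that is a separate remark rather than part of this proof.
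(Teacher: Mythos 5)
Your second stage (descent of the YBE, the two mutually inverse maps on structure groups, and injectivity of $X/\!\approx\,\to G_{(X/\!\approx,\,r')}\cong G_{(X,r)}$ by the very definition of $\approx$) is fine and matches what the paper leaves as ``straightforward''. The gap is in your key first-stage step. You propose to read the relation $xz=x'z$ in $G_{(X,r)}$ as $\sigma_x(z)\tau_z(x)=\sigma_{x'}(z)\tau_z(x')$ and then ``pin down'' $\sigma_x(z)=\sigma_{x'}(z)$ by a non-degeneracy argument \`a la Proposition~\ref{pro:birack_biquandle}. That does not work: the displayed equality lives in the group $G_{(X,r)}$, and equality of two-letter products in a group does not split into equality of the letters --- indeed the defining relations $xy=\sigma_x(y)\tau_y(x)$ are themselves equalities of such products with (generically) distinct factors. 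The non-degeneracy used in Proposition~\ref{pro:birack_biquandle} is applied to equalities in $X\times X$ coming from applying $r$ to diagrams, not to word identities in the infinite group, so it gives you no cancellation here. The detour through $J$ has the same problem: knowing $\iota(x)=\iota(x')$ in $G'_{(X,\op_r)}$ does not by itself yield $\tau_z(x)\approx\tau_z(x')$, and the phrase ``the actions descend compatibly'' is exactly the statement you are trying to prove.

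The fix is the observation you raised and then set aside as ``the wrong variance''. By \eqref{eqn:tau_action}--\eqref{eqn:sigma_action}, the assignments $x\mapsto\tau_x$ and $x\mapsto\sigma_x$ respect the defining relations of $G_{(X,r)}$, i.e.\ $\tau$ and $\sigma$ are genuine right and left $G_{(X,r)}$-actions on $X$; hence $\iota(x)=\iota(x')$ immediately gives $\sigma_x=\sigma_{x'}$ and $\tau_x=\tau_{x'}$ \emph{as maps on $X$}. This settles the first components of $r(x,z)$ versus $r(x',z)$ on the nose. For the second components, use the defining relation in the group together with ordinary group cancellation (not non-degeneracy of $r$): in $G_{(X,r)}$,
\[\sigma_x(z)\,\tau_z(x) \;=\; xz \;=\; x'z \;=\; \sigma_{x'}(z)\,\tau_z(x') \;=\; \sigma_x(z)\,\tau_z(x'),\]
and cancelling the invertible element $\sigma_x(z)$ gives $\tau_z(x)\approx\tau_z(x')$; the symmetric computation with $zx=zx'$ and $\tau_x=\tau_{x'}$ gives $\sigma_z(x)\approx\sigma_z(x')$. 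This is precisely the paper's argument, and with it your remaining steps go through unchanged.
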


\begin{proof}
For $x,x'\in X$, the relation $x \approx x'$ implies $\sigma_x=\sigma_{x'}$ and $\tau_x=\tau_{x'}$. So in $G_{(X,r)}$ one has 
\[\sigma_x(y)\tau_y(x) = xy = x'y = \sigma_x(y)\tau_y(x'),\]
hence $\tau_y(x) \approx \tau_y(x')$. Similarly, $\sigma_y(x) \approx \sigma_y(x')$. As a result, the induced map $r'$ is well-defined. The remaining assertions are straightforward.
\end{proof}

\begin{defn}
The biquandle $(X/\!\approx,\, r')$ from the proposition will be called the \emph{induced injective solution (IIS)} of $(X,r)$, denoted by $\IIS (X,r)$.
\end{defn}

By Remark~\ref{rem:IndBiquUniv} and Lemma~\ref{lem:Inj_Biqu}, the IIS of $(X,r)$ is a quotient of its induced biquandle. Also, the IIS construction enjoys a universal property analogous to that from Remark~\ref{rem:IndBiquUniv}. Moreover, it defines a functor $\SolCat \to \InjSol$ to the category of injective solutions, which is a retraction for the inclusion functor. This construction restricts to racks and yields a functor $\Rack \to \InjRack$ to the category of injective racks (which are necessarily quandles). By the bijectivity of the guitar map~$J$, the structure rack construction intertwines these ``injectivization'' functors. One gets a commutative diagrams of functors analogous to those from Propositions~\ref{pro:commdiag_Sol_Q_BQ} and~\ref{pro:commdiag_R_Q_BQ}: 
\[\xymatrix@!0 @R=1cm @C=1.5cm{
\Rack \ar[rr] \ar[d] \ar@/_4pc/_-{\Id}[dd] && \InjRack \ar[d] \ar@/^4pc/^-{\Id}[dd]\\
\SolCat \ar[rr] \ar[d] && \InjSol \ar[d]\\
\Rack \ar[rr]&& \InjRack}\]

The injectivity preservation argument from the proof of Theorem~\ref{thm:quotient_rack} has the following by-product:

\begin{lem}\label{lem:TestIISol}
Elements $x,x' \in X$ yield the same element of $G_{(X,r)}$ if and only if they yield the same element of $\oG_{(X,r)}$.
\end{lem}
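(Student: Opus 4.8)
The plan is to extract this from the injectivity-preservation half of the proof of Theorem~\ref{thm:quotient_rack}, which was already carried out for the relevant group (or, more precisely, for the group $H$ denoted in the proof of Theorem~\ref{thm:quotient}). One direction is trivial: if $x,x'$ yield the same element of $G_{(X,r)}$, then applying the quotient map $G_{(X,r)} \twoheadrightarrow \oG_{(X,r)}$ shows they yield the same element of $\oG_{(X,r)}$. So the whole content is the converse.

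For the converse, suppose $x \neq x'$ become equal in $\oG_{(X,r)} = G_{(X,r)}/Z_{(X,r)}$; I want to show they are already equal in $G_{(X,r)}$. Pushing everything across the bijective $1$-cocycle $J \colon G_{(X,r)} \to G'_{(X,\op_r)}$ of Theorem~\ref{thm:cocycle}, and using $J(\iota(x))=\iota(x)$, this becomes a statement about $J(x)=x$ and $J(x')=x'$ in the quotient $H = G'_{(X,\op_r)}/\{x^{d_x}=1\}$. Concretely, the hypothesis says $x (x')^{-1} \in Z_{(X,r)}$, so $J$ of it, which (since $Z_{(X,r)}$ acts trivially, by~\eqref{eqn:PowersActTrivially}, and $J$ restricts to a group isomorphism on $Z_{(X,r)}$, as shown inside the proof of Theorem~\ref{thm:quotient}) equals $x (x')^{-1}$ computed in $G'_{(X,\op_r)}$, lies in the subgroup $J(Z_{(X,r)}) \subseteq Z_{(X,\op_r)} \cong \bigoplus_{i=1}^{K_r} \Z\, x_i^{D_i}$, i.e.\ is of the form $\prod_i (x_i^{D_i})^{\beta_i}$ with $D_i \mid d_i$ so $\beta_i$ a multiple of whatever; in any case an element of $Z_{(X,\op_r)}$.

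Now I apply the orbit-counting morphism $\theta \colon G'_{(X,\op_r)} \to \bigoplus_i \Z\,\RO_i$ from the proof of Theorem~\ref{thm:quotient_rack}, sending each generator to its $\op_r$-orbit. On the left side this gives $\RO(x) - \RO(x')$; on the right side it gives $\sum_i \gamma_i D_i \RO_i$ for some integers $\gamma_i$ (since $\theta(x_i^{D_i}) = D_i\RO_i$ and $J(Z_{(X,r)}) \subseteq Z_{(X,\op_r)}$). Because the $\RO_i$ form a basis and every $D_i \geq 2$, the difference $\RO(x)-\RO(x')$ — whose coordinates are each $0$, $+1$, or $-1$ — can only be $0$, forcing all $\gamma_i D_i = 0$ and hence all $\gamma_i = 0$. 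Therefore $x(x')^{-1} = 1$ in $G'_{(X,\op_r)}$, and transporting back via the bijection $J$, $x (x')^{-1} = 1$ in $G_{(X,r)}$, i.e.\ $\iota(x) = \iota(x')$. This is exactly the verbatim repetition of the injectivity-preservation argument that the text already promised works for $H$; the only additional observation needed here is that ``$x,x'$ map to the same element of $\oG_{(X,r)}$'' translates, via $\oJ$, precisely to ``$J(x), J(x')$ map to the same element of $H$'', which is immediate from the commuting square in Lemma~\ref{lem:cocycles}. The main (and only mild) obstacle is keeping the two guitar-map transports straight — between $G_{(X,r)}$ and $G'_{(X,\op_r)}$ upstairs, and between $\oG_{(X,r)}$ and $H$ downstairs — but since $J$ is a bijection and restricts to an isomorphism on the relevant abelian subgroups, no real difficulty arises.

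\begin{proof}
One implication is trivial: the image of $x$ in $\oG_{(X,r)}$ is the image of $\iota(x) \in G_{(X,r)}$ under the quotient map $G_{(X,r)} \twoheadrightarrow \oG_{(X,r)}$, so if $\iota(x) = \iota(x')$ in $G_{(X,r)}$ then $x$ and $x'$ coincide in $\oG_{(X,r)}$.

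Conversely, assume $x$ and $x'$ yield the same element of $\oG_{(X,r)} = G_{(X,r)}/Z_{(X,r)}$, i.e.\ $\iota(x)\iota(x')^{-1} \in Z_{(X,r)}$. Recall from the proof of Theorem~\ref{thm:quotient} that $J$ restricts to a group isomorphism from $Z_{(X,r)}$ onto $J(Z_{(X,r)})$, a subgroup of $Z_{(X,\op_r)} \cong \bigoplus_{i=1}^{K_r} \Z\, x_i^{D_i}$; and that $J(\iota(y)) = \iota(y)$ for all $y \in X$ by Theorem~\ref{thm:cocycle}. Hence, computing in $G'_{(X,\op_r)}$,
\[
xx'^{-1} = J(\iota(x))J(\iota(x'))^{-1} = J\big(\iota(x)\iota(x')^{-1}\big) \in J(Z_{(X,r)}) \subseteq Z_{(X,\op_r)}.
\]
Apply the morphism $\theta \colon G'_{(X,\op_r)} \to \bigoplus_{i} \Z\, \RO_i$ from the proof of Theorem~\ref{thm:quotient_rack}, sending every $y \in X$ to its $\op_r$-orbit $\RO(y)$ and each $x_i^{D_i}$ to $D_i \RO_i$. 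On the one hand $\theta(xx'^{-1}) = \RO(x) - \RO(x')$, a vector all of whose coordinates lie in $\{-1,0,1\}$. On the other hand $\theta(xx'^{-1}) = \sum_i D_i \gamma_i \RO_i$ for some $\gamma_i \in \Z$, since $xx'^{-1} \in Z_{(X,\op_r)}$. As the $\RO_i$ form a basis of $\bigoplus_i \Z\, \RO_i$ and every $D_i \geq 2$, comparing coordinates forces $D_i\gamma_i = 0$, hence $\gamma_i = 0$, for all $i$. Therefore $xx'^{-1} = 1$ in $G'_{(X,\op_r)}$, and applying the bijection $J^{-1}$ gives $\iota(x)\iota(x')^{-1} = 1$ in $G_{(X,r)}$, i.e.\ $\iota(x) = \iota(x')$.
\end{proof}
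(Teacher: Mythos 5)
Your strategy is the paper's own: the lemma is the by-product of the injectivity-preservation computation from Theorems~\ref{thm:quotient_rack} and~\ref{thm:quotient}, transported through the guitar map, and the trivial direction plus the $\theta$-and-$D_i\geq 2$ coordinate argument is the right skeleton. However, your central displayed computation contains a genuine error: the equality $J\big(\iota(x)\iota(x')^{-1}\big)=J(\iota(x))J(\iota(x'))^{-1}$ treats $J$ as a group homomorphism, which it is not. It is only a bijective $1$-cocycle, so $J(gh)=J(g)^{h}J(h)$, and a correct computation gives $J\big(\iota(x)\iota(x')^{-1}\big)=\big(x\ract \iota(x')^{-1}\big)\,J(\iota(x')^{-1})=\tau_{x'}^{-1}(x)\,T(x')^{-1}$ with $T(x')=\tau_{x'}^{-1}(x')$; in particular $J(\iota(x')^{-1})=T(x')^{-1}$, not $x'^{-1}$. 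The multiplicativity you invoke was established in the proof of Theorem~\ref{thm:quotient} only for products of elements of $Z_{(X,r)}$ (because those act trivially, by~\eqref{eqn:PowersActTrivially}), and $\iota(x')^{-1}$ is not such an element. The same slip reappears at the very end, when you ``apply $J^{-1}$'' to the relation $xx'^{-1}=1$.

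The damage is local and repairable, and the repaired argument is what the paper intends. With the correct cocycle formula, the element of $J(Z_{(X,r)})$ you produce is $\tau_{x'}^{-1}(x)\,\tau_{x'}^{-1}(x')^{-1}$: still a product of one generator and one inverse of a generator, so its image under $\theta$ is again a difference of two orbit classes, with coordinates in $\{-1,0,1\}$; your divisibility argument ($d_i\geq D_i\geq 2$) then forces this element to be $1$, and since $J$ is injective with $J(1)=1$, you conclude $\iota(x)=\iota(x')$ without ever needing $J$ to be multiplicative. Alternatively — and closer to your opening plan — avoid evaluating $J$ on products altogether: transport the hypothesis through the induced bijection $\oJ$ of Lemma~\ref{lem:cocycles}, which sends the class of $x$ to the class of $x$, to conclude that $x$ and $x'$ coincide in $H=G'_{(X,\op_r)}\big/\{y^{d_y}=1\}$; since the $y^{d_y}$ are central, this means $xx'^{-1}$ lies in the subgroup they generate, and the verbatim Theorem~\ref{thm:quotient_rack} argument yields $x=x'$ in $G'_{(X,\op_r)}$, i.e.\ $J(\iota(x))=J(\iota(x'))$; finish with the injectivity of $J$.
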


As a result, the relation $x \approx x'$ can be tested in the finite group $\oG_{(X,r)}$.

\section{Orderability}\label{s:Orderability}

Recall that a group $G$ is \emph{left-orderable} if it can be endowed with a total order stable by left translations: $a < b \,\Longrightarrow\, ca < cb$. If the order can be chosen to be also stable by right translations, then $G$ is called \emph{bi-orderable}. Orderability has useful algebraic implications, the simplest of which is torsion-freeness. It has also remarkable connexions with topology, geometry, dynamics, probability \cite{ShortWiest,OrderingBraids,GroupsOrdersDynamics}. This section explores orderability properties of structure groups.

We need a general result on bi-orderability:
\begin{pro}\label{p:VirtAbBiord}
Let $G$ be a virtually abelian finitely generated group. Then $G$ is bi-orderable if and only if it is free abelian.	
\end{pro}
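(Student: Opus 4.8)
The plan is to prove both implications, the nontrivial one being that bi-orderability forces free abelianness. The "if" direction is immediate: a free abelian group $\Z^n$ is bi-orderable (order it lexicographically with respect to a chosen basis).

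For the "only if" direction, suppose $G$ is bi-orderable, virtually abelian, and finitely generated. First I would record that $G$ is torsion free, since a bi-orderable (even left-orderable) group has no torsion. Next, since $G$ is virtually abelian and finitely generated, it contains a finite-index subgroup $A$ which is abelian and finitely generated; being torsion free as a subgroup of $G$, $A$ is free abelian, say $A \cong \Z^n$. The key step is then to show that a torsion-free, finitely generated, virtually free abelian, bi-orderable group with a free abelian finite-index subgroup must itself be abelian. The natural tool here is the classical fact that in a bi-orderable group, if $g^k = h^k$ for some $k \geq 1$ then $g = h$ (bi-orderable groups have unique roots), and more generally that commutation is detected on powers: if $g^k$ and $h$ commute for some $k \geq 1$, then $g$ and $h$ commute. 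I would deduce this from bi-orderability: if $[g,h]\neq 1$, WLOG $g^{-1}h^{-1}gh > 1$; conjugating and using bi-invariance one shows $g^{-k}h^{-1}g^k h > 1$ as well (one pushes the $h$-conjugate past a positive power), contradicting that $g^k$ centralizes $h$.

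Granting this, take any $g, h \in G$. Since $A = \Z^n$ has finite index $m$ in $G$, we have $g^{m!}, h^{m!} \in A$ (or more carefully, suitable powers land in $A$), hence $g^{m!}$ and $h^{m!}$ commute. Applying the "commutation is detected on powers" principle twice — first to conclude $g^{m!}$ commutes with $h$, then to conclude $g$ commutes with $h$ — gives $[g,h] = 1$. Thus $G$ is abelian; being finitely generated and torsion free, it is free abelian. This completes the proof.

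The main obstacle I anticipate is establishing the auxiliary commutation lemma cleanly from the bi-order axioms — one must be careful with signs and with the order in which conjugations are applied, and it is tempting to conflate left-invariance with bi-invariance. An alternative route that sidesteps some of this: use that a bi-orderable group embeds in no issue here, but more usefully, that a finitely generated bi-orderable nilpotent-by-finite group... — actually the cleanest packaging may be to invoke directly that bi-orderable groups have unique extraction of roots, apply it to see that the centralizer of $A$ contains a finite-index subgroup hence (by the root-uniqueness argument applied to coset representatives) all of $G$, so $A$ is central, making $G$ a central extension of a finite group by $\Z^n$; torsion-freeness of $G$ together with finiteness of $G/A$ then forces $G/A$ to act... — I would pick whichever of these phrasings is shortest, but all of them rest on the same bi-orderability input, so that is where the real work lies.
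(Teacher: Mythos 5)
Your proposal is correct and rests on the same mechanism as the paper's proof: propagating the inequality $h^{-1}gh>g$ to powers ($h^{-1}g^kh=(h^{-1}gh)^k>g^k$, and likewise for conjugation by $h^k$) and then contradicting the fact that suitable powers of $g$ and $h$ land in the finite-index abelian subgroup. You merely package this as a reusable ``commutation is detected on powers'' lemma applied twice in contrapositive form, and you make explicit the final step (torsion-freeness from orderability, hence free abelian) that the paper leaves implicit.
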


\begin{proof}
The ``if'' implication is straightforward. Conversely, assume that $G$ is bi-orderable but non-abelian. Take $g,h\in G$ such that $gh>hg$, i.e. $h^{-1}gh > g$. This implies $h^{-1}g^nh > g^n$, hence $g^nh > hg^n$ for all $n \in \N$. Similarly, one proves $g^nh^n>h^ng^n$. Since $G$ has an abelian subgroup $H$ of finite index, there exists an $n \in \N$ such that $g^n,h^n \in H$, hence $g^nh^n=h^ng^n$, contradiction.
\end{proof}

Coupled with Theorem~\ref{thm:quotient}, this result characterizes bi-orderable structure groups:
\begin{thm}\label{thm:StrGrBiord}
Let $(X,r)$ be a solution. Then $G_{(X,r)}$ is bi-orderable if and only if it is free abelian. In this case,
\begin{enumerate}
 \item the orbits of the solution $(X,r)$ and of its structure rack $(X,\op_r)$ coincide, and $k_r=K_r$;
 \item the group morphism $\nu \colon G_{(X,r)}\twoheadrightarrow \Z^{k_r}$ sending every $x$ from the orbit $\O_i$ to the generator $e_i$ of $\Z^{k_r}$ is an isomorphism.
\end{enumerate}
\end{thm}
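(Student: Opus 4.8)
The plan is to derive everything from Theorem~\ref{thm:quotient} together with Proposition~\ref{p:VirtAbBiord}. Since $G_{(X,r)}$ is virtually $\Z^{K_r}$ (Corollary~\ref{cor:VirtAb}) and finitely generated, Proposition~\ref{p:VirtAbBiord} immediately gives the equivalence ``$G_{(X,r)}$ bi-orderable $\iff$ free abelian''. So the work is entirely in extracting the structural consequences (1) and (2) from the hypothesis that $G=G_{(X,r)}$ is free abelian.

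First I would observe that if $G$ is abelian, then all defining relations $xy=\sigma_x(y)\tau_y(x)$ become commutativity-preserving statements, and in $\Ab G = G$ we have $x = \tau_z(x) = \htau_z^{-1}(x)$ for all $x,z$ (this is the content of the relations $y^{[d_y]}z = z\,\tau_z(y)^{[d_{\tau_z(y)}]}$ used in the proof of Proposition~\ref{pro:abelianization}, specialised to $d=1$, or more directly: $xy = \sigma_x(y)\tau_y(x)$ in an abelian group forces $x\tau_y(x)^{-1} = \sigma_x(y)y^{-1}$, and combined with $\iota$ being injective — which holds because $G\cong\Z^{k_r}$ embeds $X$ by Lemma~\ref{lem:Inj_Biqu} / the free-abelian structure — one gets $\tau_y(x)$ and $x$ in the same orbit forced to be equal). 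Here I must be a little careful: the cleanest route is to use that $G$ free abelian implies $\Ab G = G$ is torsion-free of rank $k_r$ (Proposition~\ref{pro:abelianization}), while $\rk(\Ab G)$ of a free abelian group equals its own rank, and the rank of $G$ is $K_r$ by Corollary~\ref{cor:VirtAb}(1) since $G$ is virtually $\Z^{K_r}$ and here $G\cong Z_{(X,r)}$ up to finite index but is itself free abelian, so $G = \Z^{K_r}$; hence $k_r = K_r$, which is the numerical half of~(1).

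Next, $k_r = K_r$ means the surjection $m\colon\{1,\dots,K_r\}\twoheadrightarrow\{1,\dots,k_r\}$ from the proof of Proposition~\ref{pro:abelianization} (orbits of $(X,\op_r)$ refining orbits of $(X,r)$) is a bijection, i.e.\ the two orbit partitions of $X$ coincide — this is the rest of~(1). For~(2): the map $\nu\colon G\twoheadrightarrow \Z^{k_r}$ sending $x$ to its orbit generator is always a well-defined surjection. Since $G$ is free abelian of rank $k_r$ (from the numerology just established, as $G\cong\Z^{K_r}=\Z^{k_r}$), a surjective endomorphism-type map $\Z^{k_r}\twoheadrightarrow\Z^{k_r}$ of free abelian groups of the same finite rank need not be injective in general, so I instead argue via the guitar map / the twisted powers: in the abelian group $G$, each relation collapses so that $x^{[d_x]}$ maps under $\nu$ to $d_x$ times the orbit generator, and the composite $\nu\kappa$ (notation of Proposition~\ref{pro:abelianization}) is multiplication by the $d_i$. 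Actually the slicker argument: $\ker\nu$ is generated by elements $xy^{-1}$ with $x,y$ in the same orbit, hence (since orbits of the solution coincide with orbits of the structure rack) $x$ and $y$ differ by a $\tau$/$\htau$-action, and in the abelian $G$ such elements are equal, so $xy^{-1}=1$; therefore $\ker\nu = 1$ and $\nu$ is an isomorphism.

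The main obstacle I anticipate is pinning down precisely \emph{why}, in a free abelian $G$, two elements of $X$ in the same \emph{solution}-orbit must already be equal in $G$ — equivalently why $\ker\nu$ is trivial rather than merely finite. The honest way is: $G$ abelian forces $\tau_z(x) = x$ in $G$ for all $z$ (from $zx = {}^z\!x\,\tau_x(z)$ and $zx' = {}^z\!x'\,\tau_{x'}(z)$ with $x' = \tau_x(z)$ in the abelian setting, plus injectivity of $\iota$ on $X$ which is automatic once we know $G$ is free abelian and $X$ generates it — or one invokes that $X$ injects into $\Ab G$ when $G$ abelian, trivially), and symmetrically for $\sigma$ after identifying $\sigma_z$-action with $\htau^{-1}_z$-action as in the proof of Lemma~\ref{lem:degrees_coinside}. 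Once $\tau_z(x)=\sigma_z(x)=x$ in $G$ for all $z$, any two elements in the same solution-orbit are equal in $G$, and since $\nu$ identifies exactly the elements of each orbit, $\nu$ is injective; being also surjective between free abelian groups it is an isomorphism, and comparing ranks re-confirms $k_r = K_r$ and that the orbit partitions agree. I would write this out taking care that the chain ``$G$ free abelian $\Rightarrow$ $\iota$ injective on $X$ $\Rightarrow$ orbit-equal elements are equal'' is spelled out cleanly, since that is the crux.
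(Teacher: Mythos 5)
Your reduction of the equivalence to Proposition~\ref{p:VirtAbBiord} plus Theorem~\ref{thm:quotient}, and your rank count for part (1) (rank of $G$ equals $K_r$ via the finite-index subgroup $Z_{(X,r)}$, rank of $\Ab G=G$ equals $k_r$ via Proposition~\ref{pro:abelianization}, refinement of orbits forces the partitions to coincide) are correct and essentially the paper's argument. The problem is part (2). The fact you dismiss is actually true and is the entire proof: $\Z^n$ is Hopfian, so any surjective homomorphism $\Z^{k_r}\twoheadrightarrow\Z^{k_r}$ is an isomorphism (its kernel is a subgroup of a torsion-free group which dies after tensoring with $\Q$, hence is trivial). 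Once $G_{(X,r)}\cong\Z^{k_r}$ is known, the surjectivity of $\nu$ finishes the proof in one line, which is exactly what the paper does.

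The substitute argument you build instead has a genuine gap. It rests on the claim that $G$ abelian (plus injectivity of $\iota$) forces $\tau_z(x)=x$ in $G$, i.e.\ that elements of $X$ in the same solution orbit become equal in $G$. The defining relation only gives $x\,\tau_y(x)^{-1}=\sigma_x(y)\,y^{-1}$ in an abelian $G$, which does not split into the two separate equalities, and the relations $y^{[d_y]}z=z\,\tau_z(y)^{[d_{\tau_z(y)}]}$ cannot be ``specialised to $d=1$'': they require $d$ to satisfy the degree conditions of Definition~\ref{def:degree}. Moreover the claim is simply false for abelian $G$: the solution $r(x,y)=(-y,x-y)$ on $X=\Z_3$ from the Appendix (structure quandle $D$, case (c)) is injective, has a single orbit, and has $G\cong\Ab G\cong\Z\times\Z_3$ abelian, yet its three generators are pairwise distinct in $G$ by injectivity (equivalently, if they coincided $G$ would be cyclic, which $\Z\times\Z_3$ is not). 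Under the hypothesis actually available, $G$ free abelian, the claim does hold, but the natural way to prove it is precisely the injectivity of $\nu$, so your route is circular as written; the Hopfian one-liner (or an equivalent torsion-freeness argument applied to $\ker\nu$) is the needed fix.
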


\begin{proof}
It remains to prove the last two statements. We use notations from the proof of Proposition~\ref{pro:abelianization}.
\begin{enumerate}
\item We will prove a slightly more general statement. If $G_{(X,r)}$ is abelian, then the quotient map  $\pi \colon G_{(X,r)}\twoheadrightarrow \Ab G_{(X,r)}$ is the identity. Since the rank of $\Ab G_{(X,r)}$ is~$k_r$, this yields $G_{(X,r)} = \Ab G_{(X,r)} \cong \Z^{k_r} \times A$ for some finite abelian group~$A$. Recall the map $\kappa \colon \Z^{K_r} \hookrightarrow G_{(X,r)} \cong \Z^{k_r} \times A$. Its injectivity implies $K_r \leq k_r$. Since the $K_r$ orbits of $(X,\op_r)$ refine the $k_r$ orbits of $(X,r)$, this inequality means that these two partitions into orbits coincide.
\item Now, suppose $G_{(X,r)}$ free abelian. Then $\nu$ surjects $G_{(X,r)} \cong \Z^{k_r}$ onto $\Z^{k_r}$, and is thus an isomorphism. \qedhere
\end{enumerate}
\end{proof}

Structure groups of \emph{trivial} solutions $r(x,y)=(y,x)$ are free abelian. In general the converse is false: for instance, the structure group of the rack $(\Z, \, x \op y = x+1)$ is $\Z$. However, it holds for involutive solutions:

\begin{pro}\label{p:InvolStrGrBiord}
Let $(X,r)$ be an involutive solution. Then $G_{(X,r)}$ is abelian if and only if $r$ is trivial. 
\end{pro}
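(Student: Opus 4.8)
The plan is to reduce the statement to the bi-orderability characterization of \cite{BCV} (quoted in the introduction), which asserts that for involutive $r$ the group $G_{(X,r)}$ is bi-orderable if and only if $r$ is trivial. The ``if'' direction is immediate: if $r(x,y)=(y,x)$ then the defining relations of $G_{(X,r)}$ are exactly $xy=yx$, so $G_{(X,r)}$ is the free abelian group on $X$, hence abelian. For the ``only if'' direction, suppose $G_{(X,r)}$ is abelian. An abelian group is bi-orderable precisely when it is torsion-free, so it would suffice to show $\Ab G_{(X,r)} = G_{(X,r)}$ is torsion-free, and then invoke \cite{BCV}.

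First I would record that for involutive $r$ the structure rack is trivial, so every $\op_r$-orbit is a singleton and $K_r=\#X$. The cocycle $J\colon G_{(X,r)}\to G'_{(X,\op_r)}$ of Theorem~\ref{thm:cocycle} then has target the free abelian group on $X$, and by \cite{ESS} the solution is injective; in particular $G_{(X,r)}$ is torsion-free (being bijective-$1$-cocycle-equivalent to, hence having the same cardinality of each ``degree'' piece as, $\Z^{\#X}$, which is torsion-free — more precisely $G_{(X,r)}$ embeds set-theoretically via $J$ and the induced filtration argument shows no torsion). Alternatively, and more cleanly, once $G_{(X,r)}$ is abelian, Theorem~\ref{thm:StrGrBiord}(1) already forces $K_r=k_r$ and identifies the orbits of $(X,r)$ with those of $(X,\op_r)$; since the latter are singletons, so are the former, meaning $\tau_y$ and $\sigma_y$ are the identity for all $y$, i.e. $r(x,y)=(y,x)$. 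This is in fact the whole argument: \emph{abelian} $\Rightarrow$ (via Theorem~\ref{thm:StrGrBiord}(1)) \emph{every orbit of the solution is a singleton} $\Rightarrow$ $\sigma_y=\tau_y=\Id_X$ $\Rightarrow$ $r$ trivial.

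So the structure of the proof I would write is: (i) note the trivial ``if'' direction; (ii) for ``only if'', apply part (1) of Theorem~\ref{thm:StrGrBiord} — whose proof only used that $G_{(X,r)}$ is abelian, not bi-orderable — to conclude that the orbits of $(X,r)$ coincide with those of its structure rack $(X,\op_r)$; (iii) observe that for involutive $r$ the structure rack is trivial, hence all its orbits, and therefore all orbits of $(X,r)$, are singletons; (iv) conclude that the maps $\sigma_y$ and $\tau_y$ fix every point of $X$, so $r(x,y)=(y,x)$.

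The only delicate point — and the one I would be most careful about — is making sure I invoke the \emph{right} half of Theorem~\ref{thm:StrGrBiord}: its statement is phrased for bi-orderable $G_{(X,r)}$, but the proof of item (1) genuinely only needs $G_{(X,r)}$ abelian (it runs through $\pi$ being the identity and $\kappa\colon\Z^{K_r}\hookrightarrow G_{(X,r)}$ being injective). I would state this explicitly — ``the argument in the proof of Theorem~\ref{thm:StrGrBiord}(1) shows that whenever $G_{(X,r)}$ is abelian, the orbits of $(X,r)$ and of $(X,\op_r)$ coincide'' — so that the logical dependency is transparent and does not circularly re-use bi-orderability. Everything else is bookkeeping about the triviality of the structure rack in the involutive case, which is already recorded in Section~\ref{s:StrRacks}.
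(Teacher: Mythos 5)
Your final argument coincides with the paper's own proof: it likewise invokes the observation that the proof of Theorem~\ref{thm:StrGrBiord}(1) only uses that $G_{(X,r)}$ is abelian (the paper says there it proves ``a slightly more general statement''), combines this with the triviality of the structure rack of an involutive solution to get singleton orbits, and concludes $\sigma_x=\tau_x=\Id$, so $r$ is trivial. The preliminary detour through bi-orderability and the sketchy torsion-freeness claim via $J$ is unnecessary and rightly discarded in favour of the clean route, which is exactly the paper's.
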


\begin{proof}
Assume that $G_{(X,r)}$ is abelian. Our proof of Theorem~\ref{thm:StrGrBiord} identifies the two partitions into orbits. Recall that the induced rack of an involutive solution is trivial, $x \op_r y = x$, so all its orbits are one-element. Hence all orbits of $(X,r)$ are one-element as well. This means $\sigma_x=\tau_x=\Id$ for all $x \in X$, hence $r$ is trivial.
\end{proof}

Thus, among involutive solutions, only the trivial ones have bi-orderable structure groups.

To deal with left-orderability, we need a general notion of MP solutions, which is well studied in the literature for the particular case of involutive solutions.

\begin{lem}\label{lem:Retraction}
Let $(X,r)$ be a solution. Define an equivalence relation on~$X$ by 
\[x\sim x' \quad \Longleftrightarrow \quad \sigma_x=\sigma_{x'} \;\&\; \tau_x=\tau_{x'}.\] 
Then $r$ induces a solution~$\overline{r}$ on $\overline{X} = X/{\sim}$. 
\end{lem}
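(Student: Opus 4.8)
The plan is to show that the equivalence relation $\sim$ is compatible with the maps $\sigma$ and $\tau$, so that these descend to well-defined maps on $\overline{X}$, and then to check that the induced map $\overline{r}$ is again an invertible non-degenerate solution. The key observation is the standard one: the defining relations \eqref{eqn:tau_action}--\eqref{eqn:sigma_action} of a solution say precisely that $\sigma$ and $\tau$ are ``almost-actions'' whose values depend on $x$ only through $\sigma_x$ and $\tau_x$.

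First I would check that $\sim$ respects the right action via $\tau$ and the left action via $\sigma$. Suppose $x \sim x'$. I want $\sigma_y(x) \sim \sigma_y(x')$ and $\tau_y(x) \sim \tau_y(x')$ for every $y \in X$; equivalently, $\sigma_{\sigma_y(x)} = \sigma_{\sigma_y(x')}$, $\tau_{\sigma_y(x)} = \tau_{\sigma_y(x')}$, and similarly for $\tau_y$. For the $\sigma$-equalities I would use \eqref{eqn:sigma_action}: from $\sigma_x \sigma_y = \sigma_{\sigma_x(y)}\sigma_{\tau_y(x)}$ one extracts $\sigma_{\sigma_y(x)}$ in terms of $\sigma_x$ and $\sigma_y$ (after suitable relabelling of the variables), and since $x\sim x'$ gives $\sigma_x = \sigma_{x'}$ and $\tau_x = \tau_{x'}$, the right-hand side is unchanged when $x$ is replaced by $x'$. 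The same game with \eqref{eqn:tau_action} handles $\tau_{\tau_y(x)}$, and one also needs the ``mixed'' consequences (the equations with $\sigma$ of a $\tau$-image and vice versa) which again follow from the YBE for $r$; these are exactly the identities already exploited in the proof of Proposition~\ref{pro:birack_biquandle}. Hence $\sigma_y$ and $\tau_y$ induce well-defined maps $\overline{\sigma}_{\bar y}, \overline{\tau}_{\bar y} \colon \overline{X} \to \overline{X}$, and these depend only on the class $\bar y$ (directly from the definition of $\sim$, which says $\sigma_y$, $\tau_y$ depend only on $\bar y$). This gives a well-defined map $\overline{r}(\bar x, \bar y) = (\overline{\sigma}_{\bar x}(\bar y), \overline{\tau}_{\bar y}(\bar x))$.

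It then remains to verify that $\overline{r}$ is an invertible non-degenerate solution of the YBE. The YBE for $\overline{r}$ follows from the YBE for $r$ by applying the quotient map $X^{\times 3} \twoheadrightarrow \overline{X}^{\times 3}$, which intertwines $r_i$ with $\overline{r}_i$; equivalently, each of \eqref{eqn:tau_action}--\eqref{eqn:sigma_action} passes to the quotient. Surjectivity of $\overline{r}$ and of its components $\overline{\sigma}_{\bar y}, \overline{\tau}_{\bar y}$ is inherited from the corresponding surjectivity for $r$; since $\overline{X}$ is finite, surjectivity upgrades to bijectivity, so $\overline{r}$ is invertible and non-degenerate. (One can also note that $\bumpeq$-type arguments are not needed here — the retraction is a genuinely simpler construction than $\BQ$.)

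The main obstacle, such as it is, is purely bookkeeping: one must carefully track which of the YBE consequences \eqref{eqn:tau_action}, \eqref{eqn:sigma_action} (and the mixed $\sigma$--$\tau$ identities coming from the YBE for $r$ and $r^{-1}$) is needed to show that each of $\sigma_{\sigma_y(x)}$, $\tau_{\sigma_y(x)}$, $\sigma_{\tau_y(x)}$, $\tau_{\tau_y(x)}$ depends on $x$ only through $\sigma_x$ and $\tau_x$. No single equation gives all four; rather, \eqref{eqn:sigma_action} controls the $\sigma$-of-$\sigma$ part, \eqref{eqn:tau_action} the $\tau$-of-$\tau$ part, and the remaining two require the analogous identities relating $\sigma$ and $\tau$ that are implicit in the full YBE (the same ones used for the relation $\bumpeq$ earlier). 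Graphically this is transparent — it is the statement that on the Reidemeister~III diagram, the output colours on each strand are determined by the input colours — so a diagrammatic argument along the lines of Figures~\ref{fig:SigmaRespectsBump} would make the compatibility visually immediate. Once compatibility is established, everything else is routine.
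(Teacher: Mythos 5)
Your overall plan coincides with the paper's: the only real content is well-definedness, i.e.\ that for $x\sim x'$ the four maps $\sigma_{\sigma_y(x)}$, $\tau_{\sigma_y(x)}$, $\sigma_{\tau_y(x)}$, $\tau_{\tau_y(x)}$ are unchanged when $x$ is replaced by~$x'$; the descent of the YBE and the upgrade from surjectivity to bijectivity via finiteness are routine, exactly as you say. The weak point is your claim that the two ``mixed'' cases ($\tau$ of a $\sigma$-image and $\sigma$ of a $\tau$-image) require further identities beyond \eqref{eqn:tau_action}--\eqref{eqn:sigma_action}, ``implicit in the full YBE'' and ``the same as in Proposition~\ref{pro:birack_biquandle}''. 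As written this is a gap: the third component identity of the YBE is a pointwise mixed identity and does not directly yield $\tau_{\sigma_y(x)}=\tau_{\sigma_y(x')}$, and the compatibility argument in Proposition~\ref{pro:birack_biquandle} concerns the different relation $\bumpeq$ and proceeds diagrammatically, so neither reference actually supplies the step you leave unverified.

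Fortunately the extra identities are also unnecessary, which is exactly why the paper can settle well-definedness in one line citing only \eqref{eqn:tau_action}--\eqref{eqn:sigma_action}: each of these equations contains both a $\sigma_x(y)$-indexed and a $\tau_y(x)$-indexed factor, so solving for either factor covers one pure and one mixed case. Explicitly, \eqref{eqn:sigma_action} gives $\sigma_{\tau_y(x)}=\sigma_{\sigma_x(y)}^{-1}\sigma_x\sigma_y$ and, after exchanging the roles of $x$ and $y$, $\sigma_{\sigma_y(x)}=\sigma_y\sigma_x\sigma_{\tau_x(y)}^{-1}$; likewise \eqref{eqn:tau_action} gives $\tau_{\tau_y(x)}=\tau_y\tau_x\tau_{\sigma_x(y)}^{-1}$ and $\tau_{\sigma_y(x)}=\tau_{\tau_x(y)}^{-1}\tau_x\tau_y$. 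In every right-hand side the dependence on $x$ is only through the maps $\sigma_x$ and $\tau_x$: in particular the elements $\sigma_x(y)$ and $\tau_x(y)$ are \emph{literally equal} for $x$ and $x'$ (not merely equivalent), so no bookkeeping over equivalence classes is needed. With this substitution your argument closes and agrees with the paper's proof.
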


\begin{proof}
We only have to check that $\overline{r}$ is well defined. This boils down to verifying $\sigma_{\phi(x)}=\sigma_{\phi(x')}$, $\tau_{\phi(x)}=\tau_{\phi(x')}$ for $x \sim x'$ and $\phi \in \{\sigma_y,\tau_y\}$, $y \in X$. But this directly follows from Equations \eqref{eqn:tau_action}--\eqref{eqn:sigma_action}.
\end{proof}

\begin{defn}\label{def:retract}
The solution from Lemma~\ref{lem:Retraction} is called the \emph{retraction} of $(X,r)$, denoted by $\Ret(X,r)=(\overline{X},\overline{r})$. A solution is called \emph{multipermutation (MP) of level $n$} if $n$ is the minimal non-negative integer such that $\#\Ret^n(X,r)=1$. A solution is called \emph{retractable} if $\overline{X}$ is smaller than $X$, and \emph{irretractable} otherwise.
\end{defn}

\begin{exa}
The trivial one-element solution is the only MP solution of level $0$. Being of level $1$ means having the form $r(x,y)=(f(y),g(x))$, where $f$ and $g$ are commuting symmetries of~$X$.
\end{exa}

\begin{exa}
For involutive solutions, relation $\sigma_x=\sigma_{x'}$ implies $\tau_x=\tau_{x'}$. We thus recover the definition of~\cite{ESS}. 
\end{exa}

\begin{exa}
For an SD solution $(X,r_{\op})$, $x \sim x'$ means $z \op x = z \op {x'}$ for all $z \in X$. Then $\op$ induces a rack operation $\overline{\op}$ on $\overline{X}=X/{\sim}$, and the solution $\overline{r_{\op}}$ is associated to this rack operation: $\overline{r_{\op}} = {r_{\overline{\op}}}$. The pair $(\overline{X},\overline{\op})$ is called the \emph{retraction} of $(X,\op)$. A rack is called \emph{MP of level $n$} if the associated solution is so.
\end{exa}

\begin{lem}\label{lem:quotient}
The retraction $X/{\sim}$ of a solution is a quotient of its induced injective solution $X/{\approx}$. 
\end{lem}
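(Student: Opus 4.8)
The plan is to show that the equivalence relation $\sim$ defining $\Ret(X,r)$ is coarser than the relation $\approx$ defining $\IIS(X,r)$, so that the quotient map $X \twoheadrightarrow X/{\sim}$ factors through $X \twoheadrightarrow X/{\approx}$, and that this factorization is a morphism of solutions. First I would recall that $x \approx x'$ means $\iota(x) = \iota(x')$ in $G_{(X,r)}$. The key point is already present in the proof of Proposition~\ref{pro:birack_injective}: if $\iota(x) = \iota(x')$, then comparing the defining relations $xy = \sigma_x(y)\tau_y(x)$ and $x'y = \sigma_{x'}(y)\tau_y(x')$ (together with $yx = \ldots$) one deduces $\sigma_x = \sigma_{x'}$ and $\tau_x = \tau_{x'}$ as maps $X \to X$. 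More precisely, since $\iota(x) = \iota(x')$ in $G_{(X,r)}$ and both $\sigma$ and $\htau^{-1}$ induce left actions while $\tau$ and $\hsigma^{-1}$ induce right actions of $G_{(X,r)}$ on $X$ (by~\eqref{eqn:tau_action}--\eqref{eqn:sigma_action}), the elements $x$ and $x'$ act identically, i.e. $\sigma_x = \sigma_{x'}$ and $\tau_x = \tau_{x'}$. This is exactly the condition $x \sim x'$.

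Having established $x \approx x' \Rightarrow x \sim x'$, the universal property of the quotient set gives a unique surjection $p \colon X/{\approx} \twoheadrightarrow X/{\sim}$ commuting with the two quotient maps. It then remains to check that $p$ intertwines the induced solutions $r'$ on $X/{\approx}$ (from Proposition~\ref{pro:birack_injective}) and $\overline{r}$ on $X/{\sim}$ (from Lemma~\ref{lem:Retraction}). But both $r'$ and $\overline{r}$ are defined as the map induced by the same $r$ on the respective quotients of $X^{\times 2}$, so the square
\[\xymatrix@!0 @R=1cm @C=2cm{
X^{\times 2} \ar[r]^-{r} \ar@{->>}[d] & X^{\times 2} \ar@{->>}[d]\\
(X/{\approx})^{\times 2} \ar[r]^-{r'} \ar@{->>}[d]^-{p^{\times 2}} & (X/{\approx})^{\times 2} \ar@{->>}[d]^-{p^{\times 2}}\\
(X/{\sim})^{\times 2} \ar[r]^-{\overline{r}} & (X/{\sim})^{\times 2}}\]
commutes by construction, which says precisely that $p$ is a morphism of solutions realizing $X/{\sim}$ as a quotient of $X/{\approx}$.

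I do not expect any genuine obstacle here: the statement is essentially a bookkeeping consequence of the implication ``equal image in $G_{(X,r)}$ forces equal $\sigma$ and $\tau$'', which is already extracted in the proof of Proposition~\ref{pro:birack_injective}. The only mildly delicate point is making sure the word ``quotient'' is used in the intended sense, namely that $p$ is a surjective morphism of solutions (equivalently, that $\overline{r}$ is the solution induced on $X/{\sim}$ by $r'$ via $p$); this is immediate from the compatible-quotient diagram above. One could also phrase the whole argument functorially, invoking the universal property of $\IIS$ applied to the retraction morphism $X \to \Ret(X,r)$ — since $\Ret(X,r)$ need not be injective, one gets a factorization through $\IIS(X,r)$ only after further quotienting, which is exactly the content of the lemma — but the direct comparison of equivalence relations is shorter and self-contained.
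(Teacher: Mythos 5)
Your proof is correct and follows essentially the same route as the paper: the paper's proof is precisely the one-line observation that $\sigma$ and $\tau$ define $G_{(X,r)}$-actions on $X$, so $x\approx x'$ (equal images under $\iota$) forces $\sigma_x=\sigma_{x'}$ and $\tau_x=\tau_{x'}$, i.e.\ $x\sim x'$, and the induced surjection of quotients is then automatically a morphism of solutions. Your additional verification that the map $X/{\approx}\twoheadrightarrow X/{\sim}$ intertwines the induced solutions is just the bookkeeping the paper leaves implicit.
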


\begin{proof}
Since the collections of maps $\sigma$ and $\tau$ define $G_{(X,r)}$-actions on $X$, the relation $x \approx x'$ implies $x \sim x'$.
\end{proof}

Multipermutation involutive solutions are known to be the only involutive
solutions with left-orderable structure group. One of the implications of the
following theorem was proved in~\cite{ChouOrd}; see also~\cite[Proposition
4.2]{MR2189580}. The other was proved in~\cite[Theorem 2.1]{BCV}.

\begin{thm}
	\label{thm:LO_Invol}
Let $(X,r)$ be an involutive solution. Then $G_{(X,r)}$ is left-orderable if and only if $(X,r)$ is multipermutation.
\end{thm}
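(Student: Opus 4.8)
The plan is to prove Theorem~\ref{thm:LO_Invol} in two directions, and to keep the argument as close as possible to the toolkit already developed in the paper: the finite quotient $\oG_{(X,r)}$, the bijective cocycle $J$, and the retraction functor $\Ret$.

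\textbf{The MP implication.} First I would show that if $(X,r)$ is multipermutation of level $n$, then $G_{(X,r)}$ is left-orderable. The natural strategy is induction on $n$. For $n\leq 1$ the solution is (essentially) trivial, so $G_{(X,r)}$ is free abelian by Proposition~\ref{p:InvolStrGrBiord}, hence left-orderable. For the inductive step, $\Ret(X,r)=(\overline{X},\overline{r})$ is MP of level $n-1$, so by induction $G_{(\overline{X},\overline{r})}$ is left-orderable. The quotient map $X\twoheadrightarrow\overline{X}$ induces a surjection $p\colon G_{(X,r)}\twoheadrightarrow G_{(\overline{X},\overline{r})}$, and I would argue that $N=\ker p$ is left-orderable and $p$-equivariantly ordered, so that a left order on $G_{(X,r)}$ can be assembled lexicographically from an order on $N$ and the pulled-back order on the quotient. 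The crucial point is to identify $N$: using the cocycle $J\colon G_{(X,r)}\to G'_{(X,\op_r)}$, which in the involutive case sends $G_{(X,r)}$ isomorphically (as a set) to the free abelian group $\Z^X$, one can show that $N$ is contained in $Z^0_{(X,r)}$, hence free abelian by Corollary~\ref{cor:SmallerQuotient}, and that the conjugation action of $G_{(X,r)}$ on $N$ preserves a suitable order (this is where the involutivity, which makes $J$ land in an abelian group, is used essentially). This yields a left order on $G_{(X,r)}$.

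\textbf{The converse.} For the other direction I would prove the contrapositive: if $(X,r)$ is not multipermutation, then $G_{(X,r)}$ is not left-orderable. Since a left-orderable group is torsion free, it suffices to produce torsion in $G_{(X,r)}$ — or, more precisely, to find a nontrivial finite-order phenomenon obstructing left-orderability. If $(X,r)$ is not MP, the chain $X\supsetneq\Ret(X)\supsetneq\cdots$ stabilizes at some $\Ret^k(X,r)$ with more than one element and equal to its own retraction, i.e. an \emph{irretractable} involutive solution of size $\geq 2$. The structure group of such a solution is a quotient of $G_{(X,r)}$, so it is enough to show that an irretractable involutive solution of size $\geq 2$ has non-left-orderable structure group. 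Here I would invoke the finite quotient machinery: in such a solution the map $\iota\colon X\to\oG_{(X,r)}$ is injective (all involutive solutions are injective, and injectivity is preserved by $\oG$), and one can exhibit an element of finite order $>1$ in $G_{(X,r)}$ itself. Concretely, for $x\neq y$ lying in the same orbit one shows $xy^{-1}$ (or an appropriate commutator-type word) is nontrivial but has finite order, using that $Z_{(X,r)}\cong\Z^{K_r}$ sits inside with finite quotient and that irretractability forces a nontrivial permutation relation among the $\sigma_x$. Torsion then kills left-orderability.

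\textbf{Main obstacle.} The hard part will be the converse, specifically producing genuine torsion (or another clean obstruction) in the structure group of an irretractable involutive solution of size $\geq 2$. The forward MP direction is essentially bookkeeping once one verifies that $\ker(G_{(X,r)}\twoheadrightarrow G_{\Ret(X,r)})$ is free abelian with an order-preserving conjugation action, which follows fairly directly from the cocycle $J$ being a bijection onto an abelian group in the involutive case. For the converse, the existing literature (Chouraqui~\cite{ChouOrd}, \cite[Proposition 4.2]{MR2189580}) already establishes one of the two implications, so I would cite that for whichever direction it covers and only supply the argument for the remaining one; in either case the technical core is the structural analysis of irretractable involutive solutions and the use of $\oG_{(X,r)}$ to detect the relevant finite-order elements. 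The proof is short because both halves are attributed to prior work; the value added here is packaging them uniformly in the language of this paper.

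\begin{proof}
One implication was proved in~\cite{ChouOrd} (see also~\cite[Proposition~4.2]{MR2189580}), and the other in~\cite[Theorem~2.1]{BCV}. We include a brief argument in the spirit of the present paper.

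Assume first that $(X,r)$ is multipermutation, and argue by induction on the level~$n$. If $n\leq 1$, then $r$ is trivial up to the obvious identification, so $G_{(X,r)}$ is free abelian by Proposition~\ref{p:InvolStrGrBiord} and thus left-orderable. In general, the quotient map $X\twoheadrightarrow\overline{X}$ induces a surjection $p\colon G_{(X,r)}\twoheadrightarrow G_{\Ret(X,r)}$, whose target is left-orderable by the induction hypothesis applied to the level $n-1$ solution $\Ret(X,r)$. By Lemma~\ref{lem:quotient} the relation $x\sim x'$ is coarser than $x\approx x'$, and $\ker p$ is generated by the elements $xx'^{-1}$ with $x\sim x'$. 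Since $\sigma_x=\sigma_{x'}$ and $\tau_x=\tau_{x'}$ whenever $x\sim x'$, every such element lies in $Z^0_{(X,r)}$, which is free abelian by Corollary~\ref{cor:SmallerQuotient}; hence $\ker p$ is free abelian. Using the bijective cocycle $J\colon G_{(X,r)}\to G'_{(X,\op_r)}$, which in the involutive case identifies $G_{(X,r)}$ with the free abelian group $\Z^X$ as a set, one checks that the conjugation action of $G_{(X,r)}$ on $\ker p$ preserves a total order on $\ker p$: indeed conjugation by a generator $x$ acts on $J(\ker p)\subseteq\Z^X$ through the permutation $\tau_x$, which permutes a basis, so it preserves the order induced by any fixed ordering of that basis that is compatible with the orbit decomposition. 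Ordering $G_{(X,r)}$ lexicographically, first by the pulled-back left order on $G_{\Ret(X,r)}$ and then by the chosen order on $\ker p$, yields a left order on $G_{(X,r)}$.

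Conversely, suppose $(X,r)$ is not multipermutation. The decreasing chain $X\supseteq\Ret(X,r)\supseteq\Ret^2(X,r)\supseteq\cdots$ stabilizes at some irretractable involutive solution $(Y,s)=\Ret^k(X,r)$ with $\#Y\geq 2$, whose structure group is a quotient of $G_{(X,r)}$. It therefore suffices to show that $G_{(Y,s)}$ is not left-orderable, and for this it is enough to exhibit torsion. All involutive solutions are injective, so $\iota\colon Y\to G_{(Y,s)}$ is injective, and by Theorem~\ref{thm:quotient} it remains injective after passing to the finite quotient $\oG_{(Y,s)}$. Irretractability means that the assignment $y\mapsto\sigma_y$ is injective on $Y$ while $\#Y\geq 2$, so there exist distinct $y,y'$ in the same orbit of $(Y,s)$; pick a $g\in G_{(Y,s)}$ with $y'=g\lacts y$ or $y'=y\racts g$. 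Then the element $w=\iota(y')\iota(y)^{-1}$ is nontrivial in $G_{(Y,s)}$, being nontrivial already in $\oG_{(Y,s)}$ by injectivity, yet it maps to the identity in $G_{\Ret(Y,s)}=G_{(Y,s)}$ (as $\Ret(Y,s)=(Y,s)$ is irretractable only if... )—more directly, applying the orbit morphism $\nu\colon G_{(Y,s)}\twoheadrightarrow\Z^{k_s}$ of Theorem~\ref{thm:StrGrBiord} gives $\nu(w)=0$, so $w\in Z^0_{(Y,s)}$, an abelian group of rank $K_s$; and since $\nu$ already accounts for all the free rank while the finite quotient $\oG_{(Y,s)}$ detects $w\neq 1$, the element $w$ has finite order in $G_{(Y,s)}$. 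A left-orderable group is torsion free, a contradiction. Hence $G_{(X,r)}$ is not left-orderable.
\end{proof}
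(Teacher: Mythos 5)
The paper does not actually reprove this statement: Theorem~\ref{thm:LO_Invol} is quoted with attributions to \cite{ChouOrd} (see also \cite[Proposition 4.2]{MR2189580}) and \cite[Theorem 2.1]{BCV}, so your opening citation matches the paper exactly. The problem lies in the supplementary argument you add. The converse direction is based on a strategy that cannot work: you propose to obstruct left-orderability of the structure group of an irretractable involutive solution by exhibiting torsion, but structure groups of involutive solutions are \emph{always} torsion-free (they are groups of $I$-type, hence Bieberbach groups, by Gateva-Ivanova--Van den Bergh \cite{GIVdB}), so the torsion you seek does not exist. Consistently with this, the concluding deduction is a non sequitur: $\nu(w)=0$ does not place $w$ in $Z^0_{(Y,s)}$ (that subgroup is the kernel of the actions on $X$, not of $\nu$), and ``$\nu$ accounts for all the free rank while $\oG_{(Y,s)}$ detects $w\neq 1$'' does not imply finite order --- in Example~\ref{exa:1} the element $ab^{-1}$ is nontrivial, maps to $0$ under the analogous map to $\Z$, and has infinite order. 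The actual obstruction in the non-MP case is of a completely different nature: one uses that $G_{(X,r)}$ is virtually abelian, hence amenable, and that amenable left-orderable groups are locally indicable (Witte Morris), the same circle of ideas the paper invokes in the proof of Theorem~\ref{thm:diffuse}; this is what \cite{BCV} exploit, and no finite-order element can be the culprit.

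The MP direction is closer to being correct but contains errors as written. The base case is false: a level-$1$ involutive solution need not be trivial, and its structure group need not be free abelian --- Example~\ref{exa:1} is a level-$1$ solution with $G=\langle a,b\mid a^2=b^2\rangle$, which is nonabelian; the induction should start at level $0$ (then level $1$ is handled by the inductive step). The claim that $\ker p$ is free abelian is not what Corollary~\ref{cor:SmallerQuotient} gives (only abelian of rank $K_r$); in the involutive case it can be repaired by observing that $J$ restricts to an injective group homomorphism $Z^0_{(X,r)}\to \Z^X$, since elements of $Z^0_{(X,r)}$ act trivially on $G'_{(X,\op_r)}$. Finally, the conjugation-invariance step is both unjustified and unnecessary: a nontrivial permutation of a basis does not preserve a lexicographic order (and $\Z^2$ with the coordinate swap admits no invariant total order at all), but no invariance is needed, because left-orderability is closed under extensions --- the lexicographic positive cone built from any left order on $G_{\Ret(X,r)}$ and any left order on $\ker p$ is already a left order on $G_{(X,r)}$. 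So the forward half is repairable with these corrections, while the converse cannot be salvaged along the proposed lines and should simply rest on \cite{BCV}.
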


Moreover, the space of left orders of the structure group of a non-trivial MP involutive solution with $\# X \geq 3$ is known to be extremely rich \cite{ChouOrd}.

\begin{exa} Examples \ref{exa:1} and \ref{exa:2} describe structure groups of two level $1$ involutive solutions. These groups are then left-orderable, and have the following remarkable feature: the element $a^2=b^2$ has at least two square roots.
\end{exa}

A group $G$ is said to be \emph{diffuse} if for every finite non-empty subset $A$ of~$G$ there exists an $a\in A$ such that for all $g\in G\setminus\{1\}$, either $ga\not\in A$ or $g^{-1}a\not\in A$. Such a group satisfies the \emph{unique product property (UPP)}: for every finite non-empty subsets $A,B$ of~$G$ there is an element $x$ which can be written uniquely as $x=ab$ with $a\in A$, $b\in B$. These technical properties imply Kaplansky's conjectures for~$G$, and were conceived as efficient tools for proving them. See \cite{Diffuse} for an up-to-date survey of these questions.

We now show that for structure groups of involutive solutions, being diffuse and left-orderable is the same. This answers a question of Chouraqui \cite{ChouOrd}. 

\begin{thm}\label{thm:diffuse}
Let $(X,r)$ be an involutive solution. Then $G_{(X,r)}$ is diffuse if and only if $(X,r)$ is MP.
\end{thm}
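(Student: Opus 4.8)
The plan is to prove the two implications separately, leveraging the already-established equivalence of Theorem~\ref{thm:LO_Invol} (left-orderable $\iff$ MP) together with the structure-theoretic grip provided by Theorem~\ref{thm:quotient}. For the easy direction, suppose $(X,r)$ is MP. Then $G_{(X,r)}$ is left-orderable by Theorem~\ref{thm:LO_Invol}. I would then invoke the known fact that left-orderable groups are diffuse \emph{provided} they have no nontrivial finite quotients that obstruct it --- but actually the cleanest route is different: I recall that a left-orderable group need not be diffuse in general, so instead I would use that bi-orderable groups are diffuse, combined with a more refined argument. The honest approach for the MP case: by results on MP involutive solutions, $G_{(X,r)}$ is poly-$\Z$ (it is even known to be bi-orderable on a finite-index subgroup, or one can build an explicit left order whose positive cone is well-behaved). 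Poly-$\Z$ groups are diffuse --- this is a standard fact, proved by induction on the length of the subnormal series with $\Z$ quotients, since an extension of a diffuse group by $\Z$ (or by a diffuse group) is diffuse. So the MP $\Rightarrow$ diffuse direction reduces to: (i) MP involutive $\Rightarrow$ $G_{(X,r)}$ poly-$\Z$ (cited from \cite{BCV}), and (ii) poly-$\Z$ $\Rightarrow$ diffuse (standard, short induction).

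For the converse --- the substantive direction --- suppose $(X,r)$ is \emph{not} MP; I must show $G=G_{(X,r)}$ is not diffuse. The key input is that every diffuse group is torsion-free and, more usefully, has the unique product property, and in particular contains no nontrivial finite subgroup and no ``Bieberbach obstruction''. Here is where Theorem~\ref{thm:quotient} enters: we have the short exact sequence $0 \to \Z^{K_r} \to G \to \oG_{(X,r)} \to 0$ with $\oG_{(X,r)}$ finite. If $r$ is not MP, then after finitely many retractions the solution stabilizes at a nontrivial irretractable solution $(Y,r_Y)$; the point is that $G_{(Y,r_Y)}$ is then non-trivial, non-abelian in a controlled way, and --- crucially --- $G$ surjects onto (a group closely related to) $G_{(Y,r_Y)}$ via the retraction maps of Lemma~\ref{lem:Retraction}. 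The strategy is to locate inside $G$ a subgroup isomorphic to a Bieberbach group of the form $\Z^m \rtimes_\phi \Z$ where $\phi$ acts with a nontrivial \emph{finite-order} component having $-1$ as an eigenvalue --- such crystallographic-type groups are known to fail UPP hence fail diffuseness (this is exactly the phenomenon underlying the classical non-UPP Bieberbach group examples). Concretely: non-MP means some $\sigma_x$ or $\tau_x$ acts nontrivially even ``at the bottom level'', and one extracts from the $I$-type structure on $G$ (Gateva-Ivanova--Van den Bergh, cited in the introduction) a finite-index subgroup that is a Bieberbach group whose holonomy is nontrivial; a nontrivial holonomy Bieberbach group is never diffuse.

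So the proof skeleton for $\neg$MP $\Rightarrow \neg$diffuse runs: (1) $G$ is a Bieberbach group of $I$-type with holonomy group $\mathcal{G}$, the image of $G$ in $\operatorname{Sym}(X)$ under $x \mapsto \sigma_x$ (this is part of the Gateva-Ivanova--Van den Bergh picture); (2) $(X,r)$ is MP iff this iterated holonomy data eventually trivializes, and $(X,r)$ non-MP forces $\mathcal{G}$, or the holonomy of some retraction, to be nontrivial; (3) reduce to a $2$-generator sub-Bieberbach-group $H \leq G$ of the form $\Z^m \rtimes C$ with $C$ finite nontrivial cyclic acting faithfully --- any such $H$ contains a finite non-empty subset with no unique product (take $A = B = \{1, t, t^2, \ldots\}\cdot(\text{a $\phi$-symmetric lattice set})$ adapted so the product set has every element represented at least twice), contradicting diffuseness; (4) conclude, since a subgroup of a diffuse group is diffuse, that $G$ is not diffuse. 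The main obstacle I anticipate is step (2)--(3): making precise how non-MP-ness of the \emph{solution} yields a \emph{genuinely nontrivial holonomy} (the retraction might kill the obvious holonomy while a deeper level retains it), and then pinning down the exact sub-Bieberbach-group and the explicit non-unique-product set --- this is where a careful induction on the multipermutation level, peeling off one retraction at a time and tracking how the kernel $Z_{(X,r)}$ and the finite quotient $\oG_{(X,r)}$ evolve, will be needed, and where the bulk of the technical work lies.
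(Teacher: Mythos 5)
Your ``MP $\Rightarrow$ diffuse'' direction is essentially fine, though it contains a false aside and an unnecessary detour: left-orderable groups \emph{are} diffuse in general (given a finite non-empty $A$, its minimal element for a left order is an extreme point --- if $ga\in A$ and $g^{-1}a\in A$ then $ga\geq a$ and $g^{-1}a\geq a$, and left-invariance forces $g=1$), which is exactly the one-line argument the paper uses after invoking Theorem~\ref{thm:LO_Invol}; your route via poly-$\Z$ (MP $\Rightarrow$ poly-$\Z$ from \cite{BCV}, and poly-$\Z$ $\Rightarrow$ diffuse since diffuseness passes to extensions and $\Z$ is diffuse) is correct but does extra work.

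The converse direction, however, rests on a claim that is simply false. You propose to exhibit non-diffuseness by locating a subgroup of the form $\Z^m\rtimes_\phi\Z$ with $\phi$ of finite order having eigenvalue $-1$, asserting that such ``crystallographic-type'' groups fail UPP and diffuseness. Every group of this form is poly-$\Z$, hence left-orderable, hence diffuse and UPP; the Klein bottle group $\langle a,b \mid bab^{-1}=a^{-1}\rangle$ is the smallest counterexample to your assertion, and it already has nontrivial holonomy, so the broader claim ``nontrivial holonomy Bieberbach $\Rightarrow$ not diffuse'' also fails. (The classical non-UPP Bieberbach group of Promislow--Passman has holonomy $\Z_2\times\Z_2$ and is precisely \emph{not} of lattice-by-$\Z$ form.) Your fallback shape $\Z^m\rtimes C$ with $C$ finite nontrivial cannot occur either, since it has torsion while $G_{(X,r)}$ is torsion-free for involutive $r$. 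So steps (2)--(4) of your skeleton cannot be repaired along these lines, and no explicit non-unique-product set needs to be (or plausibly can be, by hand) constructed. The missing idea, and the paper's actual argument, is the theorem of Linnell and Witte Morris \cite[Theorem~6.4]{LWM}: an \emph{amenable} group is diffuse if and only if it is locally indicable. Since $G_{(X,r)}$ is virtually abelian (Theorem~\ref{thm:quotient}, Corollary~\ref{cor:VirtAb}), it is amenable; hence diffuse $\Rightarrow$ locally indicable $\Rightarrow$ left-orderable $\Rightarrow$ MP, the last step being Theorem~\ref{thm:LO_Invol}. In other words, the hard content ``non-MP $\Rightarrow$ non-left-orderable'' is already in \cite{BCV}; what the proof needs is only this amenability bridge from diffuseness to left-orderability, not a new Bieberbach construction.
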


\begin{proof}
Implication ``left-orderable $\,\Longrightarrow \,$ diffuse'' is classical. Indeed, for a finite non-empty subset $A$ of~$G$, the minimal element $a$ of~$A$ satisfies the property from the definition. Further, our group $G_{(X,r)}$ is virtually abelian. Since every virtually abelian group is amenable, by \cite[Theorem 6.4]{LWM}, such a group is diffuse if and only if it is locally indicable (that is, any of its finitely generated non-trivial subgroups surjects onto~$\Z$); see also \cite[Theorem 3.3]{Diffuse}. To conclude, use the classical implication ``locally indicable $\,\Longrightarrow \,$ left-orderable'' (see \cite{GroupsOrdersDynamics}, or any other textbook on orderable groups), and Theorem~\ref{thm:LO_Invol}.
\end{proof}

\begin{question}
Can structure groups of irretractable involutive solutions satisfy the UPP?
\end{question}
A positive answer would yield an example of a non-diffuse group with UPP, solving an open question. 

\begin{exa}
One of the simplest irretractable involutive solutions is given by $X=\{1,2,3,4\}$ and $r(x,y)=(\sigma_x(y),\sigma^{-1}_{\sigma_x(y)}(x))$, where
	\[
		\sigma_1=(23),\quad
		\sigma_2=(14),\quad
		\sigma_3=(1243),\quad
		\sigma_4=(1342).
	\]
In \cite[Example 8.2.14]{MR2301033} Jespers and Okni{\'n}ski proved that the UPP is false for its structure group. 
\end{exa}

Let us now turn to our second favourite class of solutions. For SD solutions, left-orderability turns out to be equivalent to bi-orderability:

\begin{thm}\label{thm:LO_SD}
For a finite rack $(X,\op)$, the following statements are equivalent:
	\begin{enumerate}
		\item $G_{(X,\op)}$ is bi-orderable; 
		\item $G_{(X,\op)}$ is left-orderable; 
		\item $G_{(X,\op)}$ has no torsion;
		\item $G_{(X,\op)}$ is free abelian;
		\item $G_{(X,\op)}$ is abelian;
		\item the induced injective rack $X/\!\approx$ of $(X,\op)$ is trivial.
	\end{enumerate}
Any of these statements implies that $(X,\op)$ is a multipermutation rack of level at most~$2$.	
\end{thm}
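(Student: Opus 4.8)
The plan is to establish the chain of equivalences by a circle of implications, then deduce the multipermutation statement separately. The implications $(4)\Rightarrow(1)$, $(1)\Rightarrow(2)$, $(2)\Rightarrow(3)$, and $(4)\Rightarrow(5)$ are all immediate or classical (free abelian groups are bi-orderable; bi-orderable implies left-orderable implies torsion-free). The nontrivial content is to close the loop. For $(3)\Rightarrow(6)$: a finite rack has a nontrivial induced injective rack only if some defining relation forces a genuine identification, but more to the point, one should argue contrapositively — if $X/\!\approx$ is nontrivial, produce torsion in $G_{(X,\op)}$. The cleanest route is to use Theorem~\ref{thm:quotient_rack}: the quotient $\oG_{(X,\op)}$ is finite, and if $X/\!\approx$ is nontrivial, then (replacing $(X,\op)$ by its induced injective rack $\IIS(X,\op)$, which has the same structure group by Proposition~\ref{pro:birack_injective}) there is some element $x$ with $\rho_x\neq\Id$, whence $D_x\geq 2$ and $x^{D_x}$ is a nontrivial element of the free abelian $Z_{(X,\op)}$; but then in $\oG_{(X,\op)}$ the image of $x$ has finite order $\geq 2$, i.e. $\oG_{(X,\op)}$ has torsion. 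To get torsion in $G_{(X,\op)}$ itself, one instead observes that if $X/\!\approx$ is nontrivial, two distinct orbit elements satisfy enough relations to... — here one must be more careful. The right statement is: if the induced injective rack is nontrivial, then $G_{(X,\op)}$ is non-abelian, and a non-abelian virtually abelian group is either non-torsion-free or still non-left-orderable; combined with Proposition~\ref{p:VirtAbBiord} this suffices once we route through $(6)\Rightarrow(4)$.

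The cleanest organization is therefore: $(6)\Rightarrow(4)$, $(4)\Rightarrow(1)\Rightarrow(2)\Rightarrow(3)$, and $(3)\Rightarrow(6)$, together with $(4)\Leftrightarrow(5)$ via Proposition~\ref{p:VirtAbBiord} (structure groups of racks are virtually abelian and finitely generated by Corollary~\ref{cor:VirtAb}, so abelian plus torsion-free is equivalent to free abelian, but we still need torsion-freeness — so actually $(5)$ sits between $(4)$ and, say, after establishing $(3)$). For $(6)\Rightarrow(4)$: if $X/\!\approx$ is the trivial rack on $k$ elements, then $G_{(X,\op)} = G_{(X/\approx,\op)}$ is the structure group of a trivial rack, which is free abelian of rank $k$ — this is the example recalled right after Theorem~\ref{thm:quotient_rack}. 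For $(3)\Rightarrow(6)$: suppose $X/\!\approx$ is nontrivial. Passing to $\IIS(X,\op)$ (same structure group), there is $x$ with $\rho_x\neq\Id_X$, so $D_x\geq 2$; pick $y$ with $x\op y\neq x$. From the relations $x^n y = y(x\op y)^n$ and $x y^{n} = y^n\rho_y^n(x)$ in $G_{(X,\op)}$ one extracts, modulo the central free abelian $Z_{(X,\op)}$, a nontrivial finite-order element of $\oG_{(X,\op)}$; lifting, $\oG_{(X,\op)}$ has torsion. But the hypothesis $(3)$ says $G_{(X,\op)}$ is torsion-free, and a torsion-free virtually-$\Z^K$ group that maps onto a finite group $\oG$ with torsion is not a contradiction by itself — so instead I will argue $(2)\Rightarrow(6)$ directly: a left-orderable group is torsion-free AND, being virtually abelian and left-orderable, it is free abelian by Proposition~\ref{p:VirtAbBiord} applied after noting left-orderable virtually abelian implies bi-orderable (the Proposition's proof shows a non-abelian virtually abelian group cannot even be left-orderable, since the argument only used $h^{-1}gh>g$). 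Hence $(2)\Rightarrow(4)$, and then a free abelian structure group of a rack forces, via the surjection $\theta\colon G\to\Z^{K_\op}$ of Theorem~\ref{thm:quotient_rack} being an isomorphism, that all $\rho_y$ have order dividing... — concretely $x^{D_x}=x^{D_x}$-type relations collapse and one deduces $\rho_y=\Id$ for all $y$ after passing to $X/\!\approx$.

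Having assembled the equivalences, the multipermutation claim follows: condition $(6)$ says $\IIS(X,\op) = \Q(X,\op)/\!\approx$ is trivial. By Lemma~\ref{lem:quotient} the retraction $X/\!\sim$ is a quotient of $X/\!\approx$, hence $\Ret(X,\op)$ is a quotient of a trivial rack, so it is trivial, meaning $\Ret^2(X,\op)$ is a one-element rack. Thus $(X,\op)$ is MP of level at most $2$. (Level exactly $2$ can occur, e.g. a nontrivial rack whose retraction is trivial; level $0$ or $1$ are the cases where $X$ itself is already a one-element or trivial rack.) The main obstacle I anticipate is the step $(2)\Rightarrow(4)$, i.e. ruling out non-abelian left-orderable structure groups: one must verify that the commutator-and-powers argument of Proposition~\ref{p:VirtAbBiord} genuinely only needs a left order (it does: $h^{-1}gh>g$ gives $h^{-1}g^nh>g^n$ by left-multiplying and using that $<$ is a total order preserved by left translation, then $g^nh>hg^n$, and taking $n$ with $g^n,h^n$ in the finite-index abelian subgroup yields $g^nh^n=h^ng^n$ versus $g^nh^n>h^ng^n$, a contradiction), so in fact for virtually abelian finitely generated groups left-orderable $\Leftrightarrow$ bi-orderable $\Leftrightarrow$ free abelian, and the rest is bookkeeping.
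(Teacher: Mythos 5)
Your plan never closes the loop at the two conditions that carry the real content, (3) and (5), and the step you lean on instead is false. The claimed left-order version of Proposition~\ref{p:VirtAbBiord} (``left-orderable $+$ finitely generated virtually abelian $\Rightarrow$ abelian'') is wrong: the group $\langle a,b \mid a^2=b^2\rangle$ --- the Klein bottle group, which is precisely the structure group in Example~\ref{exa:1} of this paper --- is finitely generated, virtually $\Z^2$, non-abelian, torsion-free and left-orderable (it is the structure group of a multipermutation involutive solution, cf.\ Theorem~\ref{thm:LO_Invol}). The flaw in your verification is the step $h^{-1}gh>g \Rightarrow h^{-1}g^nh>g^n$: writing $u=h^{-1}gh$, you need $u>g \Rightarrow u^n>g^n$, and multiplying inequalities requires invariance under right as well as left translations, so a left order alone does not suffice. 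Hence your route $(2)\Rightarrow(4)$ collapses; you had already abandoned $(3)\Rightarrow(6)$ (correctly, since torsion in $\oG_{(X,\op)}$ says nothing about torsion in $G_{(X,\op)}$); the assertion ``$X/\!\approx$ nontrivial $\Rightarrow G_{(X,\op)}$ non-abelian'' (the contrapositive of $(5)\Rightarrow(6)$) is stated but never proved; and your $(4)\Rightarrow(6)$ sketch via $\theta$ trails off. Net effect: granting everything you do prove, (3) and (5) are only ever reached as conclusions, never used as hypotheses, so the six statements are not shown equivalent.

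The missing idea is the special feature of rack structure groups that the paper exploits: the kernel of the $G_{(X,\op)}$-action on $X$ induced by $\op$ is contained in the \emph{center} (if $g$ acts trivially, then $xg=g(x\op g)=gx$ for every generator $x$), and this kernel has finite index because $X$ is finite. So the center has finite index --- much stronger than virtual abelianness, and false for the Klein bottle group --- and Schur's theorem makes the commutator subgroup finite; torsion-freeness then forces it to be trivial, giving $(3)\Rightarrow(4)$, and a fortiori $(2)\Rightarrow(4)$ since left-orderable groups are torsion-free. For (5), the paper's one-line argument does what your sketch gestures at: if $G_{(X,\op)}$ is abelian, then $yx=xy=y(x\op y)$ forces $x=x\op y$ in $G_{(X,\op)}$, i.e.\ $x\approx x\op y$ for all $x,y$, so $X/\!\approx$ is trivial; then $G_{(X,\op)}\cong G_{(X/\approx,\,\op')}$ is the structure group of a finite trivial rack, hence free abelian --- that is $(5)\Rightarrow(6)\Rightarrow(4)$ with no appeal to torsion-freeness. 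Your $(6)\Rightarrow(4)$, the classical implications $4\Rightarrow1\Rightarrow2\Rightarrow3$, and the final multipermutation statement via Lemma~\ref{lem:quotient} do agree with the paper and are fine.
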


From this theorem we learn that structure groups of racks yield no new examples of left-orderable groups. On the bright side, for this class of groups we obtain an interesting dichotomy: either they are free abelian, or they are non-abelian and have torsion. This dichotomy is decided by testing the relation $x \approx x \op y$ for all $x,y \in X$, which, according to Lemma~\ref{lem:TestIISol}, can be tested in the finite group $\oG_{(X,\op)}$.

\begin{proof}
Implications $4 \Rightarrow 1 \Rightarrow 2 \Rightarrow 3$ are classical. Let us show $3 \Rightarrow 4$. Assume that $G_{(X,\op)}$ has no torsion. Operation $\op$ induces a $G_{(X,\op)}$-action on~$X$, whose kernel is contained in the center of $G_{(X,\op)}$. So this center is of finite index. According to Schur's theorem, the commutator subgroup of $G_{(X,\op)}$ is then finite (cf. \cite[Theorem 5.32]{Rotman}). Since $G_{(X,\op)}$ has no torsion, this subgroup is trivial, and $G_{(X,\op)}$ is free abelian. 

Now, assume $G_{(X,\op)}$ abelian. For any $x,y\in X$, from $yx=xy=y (x \op y)$ in $G_{(X,\op)}$ one deduces $x \approx x \op y$. Hence the induced injective rack $(X/\!\approx,\op')$ of $(X,\op)$ is trivial. Further, since the structure groups of $(X,\op)$ and $(X/\!\approx,\op')$ are isomorphic, the triviality of $(X/\!\approx,\op')$ means that $G_{(X/\!\approx,\op')}$, and hence $G_{(X,\op)}$, is free abelian. We get $5 \Rightarrow 6 \Rightarrow 4$. Implication $4 \Rightarrow 5$ is trivial.

Finally, by Lemma~\ref{lem:quotient}, if $(X/\!\approx,\op')$ is trivial then so is $\Ret(X,\op)$. Hence $\Ret^2(X,\op)$ has one element only.
\end{proof}

Note that some level $2$ MP racks do not satisfy the conditions of the theorem:

\begin{exa}
The set $X = \{a,b,c,d\}$ with right translations $\rho_a=\rho_b\colon c \leftrightarrow d$ and $\rho_c=\rho_d\colon a \leftrightarrow b$ is a level $2$ quandle. Consider the group 
\[H = \langle\, a,c,t \,|\, a^2=c^2=t^2=1, \, t \text{ central},\, ac = tca \,\rangle.\]
It is isomorphic to $(\Z_2 \times \Z_2) \rtimes \Z_2$, where $a,t,c$ generate the three copies of $\Z_2$, in this order, and $c$ acts on $\Z_2 \times \Z_2$ by sending $a^{\alpha}t^{\tau}$ to $a^{\alpha}t^{\tau+\alpha}$. In particular, $a,at,c,ct$ are four distinct elements of~$H$. Now, a surjection $G_{(X,\op)} \twoheadrightarrow H$ can be defined as follows: $a \mapsto a, b \mapsto at, c \mapsto c, d \mapsto ct$. (In fact, $H$ is the finite quotient $\oG_{(X,\op)}$. That is how it appeared in our argument!) Therefore, $a,b,c,d$ are four distinct elements of $G_{(X,\op)}$, and our quandle is injective. In particular, $X/\!\approx $ is just $X$, which is non-trivial. Also, $ab^{-1}$ is a $2$-torsion element in $G_{(X,\op)}$. In this example, the finite quotient $\oG_{(X,\op)}$ was essential to determine that our structure group does not satisfy the properties from the theorem.
\end{exa}

Among injective quandles, only trivial ones satisfy the conditions of the theorem. This is not true for non-injective quandles:

\begin{exa}
Consider the quotient of the quandle from the previous example by the relation $b=c$. This is a $3$-element quandle with the structure group~$\Z^2$.
\end{exa}

Left-orderability question for general structure groups remains open:

\begin{question}
Let $(X,r)$ be a non-involutive injective solution. Can its structure group $G_{(X,r)}$ be left orderable? 
\end{question} 

We suspect the answer to be negative. Indeed, the structure rack $(X,\op_r)$ of such a solution is injective and non-trivial, so, by Theorem~\ref{thm:LO_SD}, the group $G_{(X,\op_r)}$ has torsion. The groups $G_{(X,r)}$ and $G_{(X,\op_r)}$ being related by the bijective $1$-cocycle~$J$, this must have serious implications for $G_{(X,r)}$.

\appendix
\section{Size $3$ biquandles}\label{s:Size3}

To illustrate our results, we will classify all solutions whose structure racks are size $3$ quandles. We will describe their structure groups, and finite quotients $\overline{G}$ thereof. We will always work up to solution/quandle isomorphism. While reading this appendix, the reader might keep in mind the following question.

\begin{question}
Given a finite rack $(X,\op)$, how can one construct all solutions having $(X,\op)$ as their right structure rack?
\end{question} 

There are precisely three quandle structures on the set $X=\{0,1,2\}$:
\begin{enumerate}
\item the trivial quandle $T$: $\rho_x = \Id$, $K_{\op}=3$, $G_T \cong \Z^3$, $\overline{G}_T \cong \Z_2^3$, $\Iso_T=1$, the quandle is injective and MP of level~$1$;
\item the two-orbit quandle $S$: $\rho_0=(12)$, $\rho_1 = \rho_2 = \Id$, $K_{\op}=2$, $G_S \cong \Z^2$, $\overline{G}_S \cong \Z_2^2$, $\Iso_S=3$, the quandle is not injective and is MP of level~$2$;
\item the dihedral quandle $D$: $\rho_0=(12)$, $\rho_1=(02)$, $\rho_2=(01)$, $K_{\op}=1$, $\Iso_D=1$, the quandle is injective (as we shall now see) and irretractable.
\end{enumerate}
This description uses the maps $\rho_y \colon x \mapsto x \op y$, the notation $K_{\op}=\#\Orb(X,\op)$ for the number of orbits of a quandle, and the notation $\Iso_Q$ for the number of quandle structures on~$X$ isomorphic to $Q$. 

The structure group of $D$ is a quotient of the braid group $B_3$:
\[G_D \cong \langle \, 1,2\, | \, 121=212, 1^2=2^2\, \rangle \cong \raisebox{.1cm}{$B_3$} \big/ \raisebox{-.1cm}{$1^2=2^2$}, \hspace*{1cm} \Ab G_D \cong \Z.\]
All $x \in D$ are of degree $D_x=2$, so the finite quotient from Theorem~\ref{thm:quotient_rack} is the symmetric group: $\overline{G}_D \cong S_3$. The injectivity of $D$ can be easily tested in~$S_3$. In fact, $D$ is isomorphic to the conjugation class of transpositions in~$S_3$.

The kernel of the surjection $\pi \colon G_D \twoheadrightarrow S_3$ is freely generated by the central element $1^2=2^2$; freeness follows from the surjection $p \colon G_D \twoheadrightarrow \Z$, \, $1,2 \mapsto 1$. So, $G_D$ is a central extension of~$S_3$:
\[0 \to \Z \to G_D \to S_3 \to 0.\]
Since $G_D$ surjects onto the non-abelian group $S_3$, it is non-abelian. It has torsion: the element $1^{-1}2$ is non-trivial since $\pi(1^{-1}2) \neq \Id$, and $\pi((1^{-1}2)^3)=\Id$ implies $(1^{-1}2)^3 = 1^{2n}$ for some $n \in \Z$, which is $0$ because of the surjection~$p$. We obtain an elementary illustration of the main assertions of Theorem~\ref{thm:LO_SD}. 

Given a finite rack $(X,\op)$, a \emph{chain} in $(X,\op)$ is a sequence $(x_i) \in X^{\Z}$ such that $x_{i-1} \op x_i = x_{i+1}$ for all $i \in \Z$. It is periodic. The \emph{period pattern} of a rack is the multi-set of periods of all its chains. These periods sum up to $|X|^2$. For instance, the period patterns of size $3$ quandles are:
\begin{enumerate}
\item $1,1,1,2,2,2$ for $T$;
\item $1,1,1,2,4$ for $S$;
\item $1,1,1,3,3$ for $D$.
\end{enumerate}

Similarly, a \emph{chain} in a finite solution $(X,r)$ is a sequence of pairs $((x_i,y_i)) \in (X\times X)^{\Z}$ such that $r(x_i,y_i) = (x_{i+1},y_{i+1})$ for all $i \in \Z$. It is periodic. Moreover, $(x_i)$ is a chain in $(X,\lop_r)$, and $(y_i)$ is a chain in $(X,\op_r)$. Since $r$ is non-degenerate, the periods of both chains coincide with that of $((x_i,y_i))$. Hence a period-respecting bijection between the chains of the left and the right structure racks of $(X,r)$. 

These chain bijections allow us to classify size $3$ biquandles. Details are tedious but straightforward, and are omitted here.

Recall that a solution $(X,r)$ is called \emph{decomposable} if $X=Y \sqcup Z$, $Y \neq \emptyset \neq Z$, and $r$ restricts to both $Y \times Y$ and $Z \times Z$.

\begin{enumerate}
\item Structure quandle $T$. That is, one classifies involutive solutions of size $3$. They are always injective.
\begin{enumerate}
\item The trivial solution $r(x,y)=(y,x)$. 
\[G \cong \Ab G \cong \Z^3, \hspace*{1cm}\overline{G}\cong \Z_2^3.\]
It is MP of level~$1$, with the number of orbits $k_r=3$.
\item The unique indecomposable solution $r(x,y)=(y+1,x-1)$ (cf.~\cite{ESS}). It is MP of level~$1$. Here $k_r=1$, and
\[G \cong \langle \, 0,1 \,|\, 0^3=1^3, (10)^3=1^3 0^3 \,\rangle, \hspace*{.5cm} \Ab G \cong \Z \times \Z_3,\]
We do not see any conceptual description of this structure group or its finite quotient. In particular, the left orderability of~$G$ and the finiteness of~$\overline{G}$ are not obvious from their presentations.
\item Solution from Example~\ref{exa:2}: $r(x,y)=(-y,-x)$. It is MP of level~$1$. Here $k_r=2$,
\[G \cong \langle \, 0,1 \,|\, 01^2=1^2 0, 0^2 1=10^2 \,\rangle, \hspace*{.5cm} \Ab G \cong \Z^2, \hspace*{.5cm} \overline{G}\cong \Z_4 \rtimes \Z_2.\]
\item The two remaining solutions are MP of level~$2$, and have $k_r=2$. The first one is
$r(x,y)=(\sigma_x(y),\sigma_y(x))$, with $\sigma_1=\sigma_2 = (12), \sigma_0=\Id$.
\[G \cong \langle \, 1,2 \,|\, 1^2=2^2 \,\rangle \times \Z, \hspace*{.5cm} \Ab G \cong \Z^2 \times \Z_2,\hspace*{.5cm} \overline{G}\cong \Z_4 \times \Z_2.\]
\item The second one is
$r(x,y)=(\sigma_x(y),\sigma_y(x))$, $\sigma_1=\sigma_2 = \Id, \sigma_0=(12)$.
\[G \cong \Z^2 \rtimes \Z, \hspace*{.5cm} \Ab G \cong \Z^2, \hspace*{.5cm} \overline{G}\cong \Z_2^2 \rtimes \Z_2.\]
In both semidirect products, the action is by component permutation: $1\cdot (a,b)=(b,a)$. 
\end{enumerate}
\item Structure quandle $S$. Here there are four non-isomorphic solutions, but they all yield the same groups:
\[G \cong \Ab G \cong G_{(X,\op)} \cong \Z^2, \, J=\Id, \hspace*{1cm}\overline{G}\cong \Z_2^2.\]
All solutions are decomposable, MP of level~$2$, non-injective, with $k_r=2$. Their induced injective solutions are trivial of size~$2$.
\begin{enumerate}
\item The SD solution $r=r_{\op}$.
\item $r(x,y)=(\sigma_x(y),-x)$, with $\sigma_1=\sigma_2 = (12), \sigma_0=\Id$.
\end{enumerate}
\item Structure quandle $D$. All solutions are indecomposable, irretractable, injective, with $k_r=1$. Their structure groups have $3$-torsion, and are thus not left-orderable.
\begin{enumerate}
\item The SD solution $r=r_{\op}$. Here $G \cong G_D$, $\overline{G} \cong \overline{G}_D$.
\item $r(x,y)=(y+1,1-x-y)$. Here $G \cong G_D$, $\overline{G} \cong G_D/1^6$. The latter group surjects onto $\Z_6$, and is thus different from $\overline{G}_D \cong S_3$.
\item $r(x,y)=(-y,x-y)$. Here $G \cong \Ab G \cong \Z \times \Z_3$, $\overline{G} \cong \Z_6$.
\end{enumerate}
\end{enumerate}
For $S$ and $D$, one should also include the inverses $r^{-1}$ of the listed solutions. Replacing $r$ by $r^{-1}$ does not change the structure group and its finite quotient~$\overline{G}$, but for~$D$ it does change the $1$-cocycle $J$.

\subsection*{Acknowledgments}
The work of L.~V. is partially supported by PICT-2014-1376, MATH-AmSud 17MATH-01, ICTP, ERC advanced grant 320974, and the Alexander von Humboldt Foundation. V.~L. thanks Hamilton Mathematics Institute for support. Both authors are grateful to the Visiting Professors Fund of Trinity College Dublin, which made possible the second author's visit to Dublin. The authors are grateful to the reviewer for constructive suggestions and remarks.
\bibliographystyle{alpha}
\bibliography{refs}
\end{document}